\numberwithin{equation}{section}
\newtheorem{Proposition}[equation]{Proposition}
\newtheorem{Lemma}[equation]{Lemma}
\newtheorem{Theorem}[equation]{Theorem}
\newtheorem{Corollary}[equation]{Corollary}
\theoremstyle{definition}
\newtheorem{Definition}[equation]{Definition}
\newtheorem{Remark}[equation]{Remark}
\newtheorem{Example}[equation]{Example}
\def\sub{\subseteq}
\def\into{\hookrightarrow}
\def\onto{\twoheadrightarrow}
\def\iso{\cong}
\def\isoto{\stackrel{\sim}{\to}}
\def\bar{\overline}
\def\ch{\operatorname{ch}}
\def\ad{{\operatorname{ad}\,}}
\def\ne{{\operatorname{ne}}}
\def\Pr{\operatorname{Pr}}
\def\res{\operatorname{res}}
\def\gr{\operatorname{gr}}
\def\id{\operatorname{id}}
\def\Lie{\operatorname{Lie}}
\def\C{\mathbb C}
\def\R{\mathbb R}
\def\Z{{\mathbb Z}}
\def\1{\mathbbm 1}
\def\bu{\mathbf u}
\def\bv{\mathbf v}
\def\bw{\mathbf w}
\def\bx{\mathbf x}
\def\by{\mathbf y}
\def\bz{\mathbf z}
\def\End{{\operatorname{End}}}
\def\im{{\operatorname{im}}}
\def\Wh{\operatorname{Wh}}
\def\lan{\langle}
\def\ran{\rangle}
\def\ostar{\circledast}
\def\a{\mathfrak a}
\def\b{\mathfrak b}
\def\g{\mathfrak g}
\def\k{\mathfrak k}
\def\l{\mathfrak l}
\def\m{\mathfrak m}
\def\n{\mathfrak n}
\def\p{\mathfrak p}
\def\q{\mathfrak q}
\def\r{\mathfrak r}
\def\t{\mathfrak t}
\def\z{\mathfrak z}
\def\gl{\mathfrak{gl}}
\def\sl{\mathfrak{sl}}
\def\cO{\mathcal O}
\def\cS{\mathcal S}
\title[Translation for finite $W$-algebras]
{Translation for finite $W$-algebras}
\author{Simon M.~Goodwin}
\address{School of Mathematics, University of Birmingham, Birmingham, B15 3LX,~UK}
\email{goodwin@for.mat.bham.ac.uk}
\thanks{2000 {\it Mathematics Subject Classification}:  17B10, 17B35,
81R05.}
\begin{document}

\begin{abstract}
A finite $W$-algebra $U(\g,e)$ is a certain finitely generated
algebra that can be viewed as the enveloping algebra of the Slodowy
slice to the adjoint orbit of a nilpotent element $e$ of a complex
reductive Lie algebra $\g$. It is possible to give the tensor
product of a $U(\g,e)$-module with a finite dimensional
$U(\g)$-module the structure of a $U(\g,e)$-module; we refer to such
tensor products as translations. In this paper, we present a number
of fundamental properties of these translations, which are expected
to be of importance in understanding the representation theory of
$U(\g,e)$.
\end{abstract}

\maketitle

\section{Introduction} \label{S:intro}

Let $\g$ be a reductive Lie algebra over $\C$ and let $e \in \g$ be
nilpotent. The finite $W$-algebra $U(\g,e)$ associated to the pair
$(\g,e)$ is a finitely generated algebra obtained from $U(\g)$ by a
certain quantum Hamiltonian reduction; for a definition of
$U(\g,e)$, we refer the reader to Section~\ref{S:finW}. Finite
$W$-algebras were introduced to the mathematical literature by
Premet in 2002, see \cite{Pr1}. A special case of the definition,
when there is an even good grading for $e$, first appeared in the
PhD thesis of Lynch \cite{Ly}, extending work of Kostant for the
case where $e$ is regular nilpotent \cite{Ko}. Since \cite{Pr1},
there has been a great deal of research interest in finite
$W$-algebras and their representation theory, see for example
\cite{Br,BGK,BK1,BK2,BK3,Gi,GRU,Lo1,Lo2,Lo3,Lo4,Pr2,Pr3,Pr4}. This
is largely due to close connections between the representation
theory of $U(\g,e)$ and that of $U(\g)$, which are principally
through Skryabin's equivalence, see \cite{Sk}. This is discussed
below and provides an important connection between the primitive
ideals of $U(\g)$ whose associated variety contains the adjoint
orbit of $e$, and the primitive ideals of $U(\g,e)$; see \cite[Thm.\
3.1]{Pr2}, \cite[Thm.\ 1.2.2]{Lo1} and \cite[Thm.\ 1.2.2]{Lo2}.

In mathematical physics, finite $W$-algebras and their affine
counterparts have attracted a lot of attention under a slightly
different guise; see for example \cite{BT,DK,VD}.  It is proved in
\cite{DDDHK} that the definition in the mathematical physics
literature via BRST cohomology agrees with Premet's definition,
\cite{Pr1}. The equivalence of the definitions is of great
importance in \cite{BGK}, and also plays a significant role here.

\medskip

For the remainder of the introduction $M$ is a finitely generated
$U(\g,e)$-module and $V$ is a finite dimensional $U(\g)$-module.  We
define the {\em translation $M \ostar V$ of $M$ by $V$} by
transporting the tensor product on $U(\g)$-modules through
Skryabin's equivalence; we refer the reader to Section~\ref{S:trans}
for a precise definition.   Such translations are expected to be of
importance in understanding the representation theory of
$U(\g,e)$.

In this paper we prove a number of properties of translations. A
number of our results are generalizations of results from \cite[Ch.\
8]{BK2} for the case $\g = \gl_n(\C)$, though we require different
methods in general.  We outline our main results and the structure
of the paper below.

\smallskip

After giving some preliminaries in Section~\ref{S:prelim}, we
consider both the Whittaker model definition of $U(\g,e)$ and its
definition via nonlinear Lie algebras in Section~\ref{S:finW}; the
latter is our preferred definition in the rest of the paper. We
recall the equivalence of these two definitions and also present
some structure theory of $U(\g,e)$.

In Section~\ref{S:trans},  we give the definition of translation for
both the Whittaker model definition and the definition via nonlinear
Lie algebras.  In Lemma~\ref{L:transequiv}, we show that these two
definitions of translation are equivalent.

The principal goal of Section~\ref{S:vspiso} is to prove that there
is an isomorphism of vector spaces
\begin{equation} \label{e:iso}
M \ostar V \iso M \otimes V,
\end{equation}
so that translations leads to the structure of a $U(\g,e)$-module on
$M \otimes V$. This isomorphism is a consequence of Theorem
\ref{T:iso}, which considers a certain (Kazhdan) filtration on $M
\ostar V$ and the associated graded module. Although Theorem
\ref{T:iso} implies existence of an isomorphism as in \eqref{e:iso},
it does not give an explicit isomorphism.  This is remedied is
\S\ref{ss:lift}, where we discuss explicit isomorphisms, which are
natural in both $M$ and $V$; we note, however, that these
isomorphism are not canonical.  In particular, we point the reader
to {\em lift matrices} in Definition~\ref{D:lift}, which are
remarkable matrices that allow one to describe the isomorphisms, and
are of great importance in the rest of the paper.

In \S\ref{ss:loop}, we recall the loop filtration on $U(\g,e)$ and
define a loop filtrations on $M$ and $M \ostar V$. The associated
graded algebra $\gr' U(\g,e)$ for the loop filtration is $U(\g^e)$,
where $\g^e$ is the centralizer of $e$ in $\g$. In Proposition
\ref{P:loop}, we prove that $\gr'(M \ostar V)$ and $\gr' M \otimes
V$ are isomorphic as $U(\g^e)$-modules.

In Section~\ref{S:basic}, we present some elementary properties of
translation.  In Proposition~\ref{P:tildepmod}, we give a tensor
identity for translations of certain $U(\g,e)$-modules that occur as
restrictions.  Then in Lemmas~\ref{L:assoc} and~\ref{L:adjdual}, we
show that translation is ``associative'', and that $? \ostar V^*$ is
biadjoint to $? \ostar V$, where $V^*$ denotes the dual
$U(\g)$-module.

In Section~\ref{S:hw}, we consider relationship between translations
and the highest weight theory from \cite[\S4]{BGK}. Our first main
result of this section is Proposition~\ref{P:transadm}, which says
that the category $\cO(e)$ of $U(\g,e)$-modules from
\cite[\S4.4]{BGK} is stable under translation; the category $\cO(e)$
is an analogue of the usual BGG category $\cO$ of $U(\g)$-modules.
Recently, in \cite{Lo3}, Losev has proved that the $\cO(e)$ is
equivalent to a certain category of Whittaker modules for $U(\g)$,
which in particular verified \cite[Conj.\ 5.3]{BGK}. We note that
this equivalence of categories enables an alternative definition of
translation for $M \in \cO(e)$, which seems likely to be equivalent.
The second main result in Section~\ref{S:hw} concerns translations
of Verma modules for $U(\g,e)$ as defined in \cite[\S4.2]{BGK}.  In
Theorem~\ref{T:filtverma}, we show that the translation of a Verma
module is filtered by Verma modules.

The BRST definition of $U(\g,e)$ is recalled in Section
\ref{S:brst}.   The definition of translation in the BRST setting is
given in Definition~\ref{D:BRSTtrans}, and shown to be equivalent to
the previous definition in Proposition~\ref{P:brstequiv}.  This
equivalence, and the explicit form of it given in
Theorem~\ref{T:brst} is of importance in Section~\ref{S:transdual}
as mentioned below.

Section~\ref{S:right} is a short technical section of the paper.
Throughout the paper, we work with ``left-handed'' definitions, but
in Section~\ref{S:transdual} it is necessary to consider
``right-handed'' versions of certain objects.  In Section
\ref{S:right}, we present the required definitions and right-handed
analogues of results.

The main result of Section~\ref{S:transdual} says that translation
commutes with duality in a certain sense.  The exact statement is
given in Theorem \ref{T:transdual}; in essence it says that
$$
\bar M \ostar \bar V \iso \bar{M \ostar V},
$$
where bars denote {\em restricted duals}.  This result is a
consequence of Theorem~\ref{T:dualizable}, which provides a striking
relationship between lift matrices for $V$ and $\bar V$.

The final section of this paper contains a slightly technical
result. The Whittaker model definition of $U(\g,e)$ depends on two
choices: of a good grading $\g = \bigoplus_{j \in \R} \g(j)$ for $e$
and an isotropic subspace $\l \sub \g(-1)$.  Thanks to a
construction of Gan and Ginzburg \cite[Thm.\ 4.1]{GG} it is known
that the definition does not depend on the choice of $\l$ up to
isomorphism. In \cite[Thm.\ 1]{BG}, it was shown that the definition
of $U(\g,e)$ does not depend on the choice of good grading up to
isomorphism.  In Proposition~\ref{P:indepiso} and
Theorem~\ref{T:indepgood}, we show that in the appropriate sense the
translation of $M$ by $V$ does not depend on the choice of $\l$ and
of the good grading.  This result justifies the fact that,
throughout most of the paper, we only consider translations for the
definition of $U(\g,e)$ via nonlinear Lie algebras and a fixed good
grading.

\smallskip

We end the introduction with a couple of remarks.  First, we note
that the definition of translations leads to a definition of
translation functors, in analogy to those in other setting: for the
case of reductive algebraic groups see \cite[II.2]{Ja}.  We do not
consider these functors in this paper, but remark here that an
alternative approach to translation functors for finite $W$-algebras
using the theory of Whittaker $\mathcal D$-modules is given in
\cite[\S5]{Gi}. The two approaches are expected to be related.

Second we comment on Losev's definition of $U(\g,e)$ via Fedosov
quantization, see \cite[\S3]{Lo1}. It is possible to define
translation with this definition of $U(\g,e)$ as in \cite[\S4]{Lo1}
and it is expected that this definition of translation is equivalent
to those considered in this paper.

\subsection*{Acknowledgments}
I am very grateful to Jonathan Brundan who suggested the topic of
this paper, and made a number of suggestions to improve the paper. I
would also like to thank Alexander Kleshchev for helpful
discussions.  This paper was partly written during a stay at the
Isaac Newton Institute for Mathematical Sciences, Cambridge during
the "Algebraic Lie Theory" Programme in 2009; I would like to thank
the institute and its staff for the hospitality.

\section{Preliminaries} \label{S:prelim}

Throughout this paper we work over the field of complex numbers $C$;
though all of our results remain valid over an algebraically closed
field of characteristic $0$. As a convention throughout this paper,
by a ``module'' we mean a finitely generated left module; we state
explicitly when we are considering right modules, which are also
always finitely generated.

\subsection{Notation} \label{ss:notn}

Let $G$ be connected reductive algebraic group over $\C$, let $\g$
be the Lie algebra of $G$.  Let $(\cdot|\cdot)$ be a non-degenerate
symmetric invariant bilinear form on $\g$.  For $x \in \g$, we write
$\g^x$ for the centralizer of $x$ in $\g$.  We write $\z(\g)$ for
the centre of $\g$.

Let $e$ be a nilpotent element of $\g$ and fix an ${\sl_2}$-triple
$(e,h,f)$.   Define the linear functional
$$
\chi:\g \rightarrow \C, \qquad x \mapsto (e|x).
$$
Let $\t^e$ be a maximal toral subalgebra of $\g^e \cap \g^h$, and
let $\t$ be a maximal toral subalgebra of $\g$ containing $\t^e$ and
$h$.  The root system of $\mathfrak{g}$ with respect to
$\mathfrak{t}$ is denoted by $\Phi$.

We recall that a $\Z$-grading
$$
\g = \bigoplus_{j \in \Z} \g(j)
$$
of $\g$ is called a {\em good grading} for $e$ if $e \in \g(2)$,
$\g^e \sub \bigoplus_{j \geq 0} \g(j)$ and $\z(\g) \sub \g(0)$, see
\cite{EK}. The standard example of a good grading for $e$ is the
Dynkin grading obtained by taking the $\ad h$-eigenspace
decomposition of $\g$.  We fix a good grading for the remainder of
the paper, which we may assume satisfies $f \in \g(-2)$ and $\t \sub
\g(0)$. In Section~\ref{S:indep}, we allow the grading to vary and
consider the more general notion of good $\R$-gradings.  One can
easily show that there exists $c \in \t$ such that the good grading
of is the $\ad c$-eigenspace decomposition, i.e.\ $\g(j) = \{x \in
\g \mid [c,x] = jx\}$.

The vector space $\g(-1)$ is denoted by $\k$. Let $\omega = \lan
\cdot | \cdot \ran$ be the non-degenerate alternating form on $\k$
defined by
$$
\lan x|y \ran = \chi([y,x]).
$$
Let $\p = \bigoplus_{j \geq 0} \g(j)$; this is a parabolic
subalgebra of $\g$. We abbreviate $\n = \bigoplus_{j < 0} \g(j)$,
which is a nilpotent subalgebra of $\g$, and fix a basis
$b_1,\dots,b_r$ for $\n$ such that $b_i$ is weight vector for $\t$
with weight $\beta_i \in \Phi$, and $b_i \in g(-d_i)$ with $d_i \in
\Z_{\ge 1}$. We write $f_1,\dots,f_r$ for the dual basis of $\n^*$.

In the sequel we require ``copies'' of $\n$ and $\k$ given by
$\n^{\ch} = \{x^{\ch} \mid x \in \n\}$ and $\k^{\ne} = \{x^{\ne}
\mid x \in \k\}$ respectively. Given $x \in \g$, we may write $x =
\sum_{j \in \Z} x(j)$ for the decomposition of $x$ with respect to
the good grading, i.e.\ $x(j) \in \g(j)$ for each $j$. In some
situations we wish to make sense of $x^{\ch}$, when $x \in \g$ but
$x \notin \n$, by convention we set $x^{\ch} = x(<0)^{\ch}$, where
$x(<0) = \sum_{j < 0} x(j)$; there is an analogous convention for
$x^{\ne}$ when $x \in \g$ but $x \notin \k$, i.e.\ $x^{\ne} =
x(-1)^{\ne}$.

\subsection{Recollection on non-linear Lie algebras} \label{ss:nlla}

In this article we use an easy special case of
the notion of a non-linear Lie superalgebra from \cite[Defn.\
3.1]{DK}.

In this article, a {\em non-linear Lie superalgebra} means a vector
superspace $\a = \a_{\bar 0} \oplus \a_{\bar 1}$ equipped with a {\em
non-linear Lie bracket} $[\cdot,\cdot]$: that is,
a parity preserving linear map
$\a \otimes \a \rightarrow T(\a)$ ($=$ the tensor algebra on $\a$)
satisfying the following conditions for all homogeneous $a,b,c \in
\a$:
\begin{enumerate}
\item[(i)]
$[a,b] \in \C \oplus \a$;
\item[(ii)] $[a,b] = (-1)^{p(a) p(b)} [b,a]$, where $p(a) \in \Z_2$
denotes parity; and
\item[(iii)] $[a,[b,c]] = [[a,b],c]+(-1)^{p(a) p(b)} [b,[a,c]]$
(interpreted using the convention that any bracket with a scalar is
zero).
\end{enumerate}
This definition agrees with the general notion of non-linear Lie
superalgebra from \cite[Defn.\ 3.1]{DK} when the grading on $\a$ in
the general setup is concentrated in degree $1$.

The {\em universal enveloping superalgebra} of a non-linear Lie
superalgebra $\a$ is defined to be $U(\a) = T(\a)/M(\a)$, where
$M(\a)$ is the two-sided ideal of $T(\a)$ generated by the elements
$a \otimes b - (-1)^{p(a) p(b)} b \otimes a - [a,b]$ for all
homogeneous $a,b \in \a$. By a special case of \cite[Theorem
3.3]{DK}, $U(\a)$ is {\em PBW generated} by $\a$ in the sense that
if $\{x_1,\dots,x_m\}$ is any homogeneous ordered basis of $\a$ then
the ordered monomials
$$
\{x_1^{a_1} \cdots x_m^{a_m} \mid a_i \in \Z_{\ge 0}
\text{ if } p(x_i) =  \bar 0
\text{ and } a_i \in \{0,1\} \text{ if } p(x_i) =  \bar 1\}
$$
give a basis for $U(\a)$.

By a {\em subalgebra} of a non-linear Lie superalgebra $\a$ we mean
a $\Z_2$-graded subspace $\b$ of $\a$ such that $[\b,\b] \subseteq
\C \oplus \b$. In that case $\b$ is itself a non-linear Lie
superalgebra and $U(\b)$ is identified with the subalgebra of
$U(\a)$ generated by $\b$.

We call $\a$ a {\em non-linear Lie algebra} if it is purely even.

\section{Finite W-algebras} \label{S:finW}

In this section we give both the Whittaker model definition of the
finite $W$-algebra associated to $e$ denoted $W_\l$, and the
definition via non-linear Lie algebras denoted $U(\g,e)$.  Then we
recall the equivalence of these definitions (for the case $\l = 0$)
from \cite[\S2]{BGK}.  The definition via non-linear Lie algebras is
the preferred formulation in most of the paper, but the Whittaker
model definition is required for  Theorem~\ref{T:iso}, which is a
fundamental result in this paper.  We also present some results on
finite $W$-algebras that are required in the sequel; these are
contained in \S\ref{ss:structure} and \S\ref{ss:nllaskryb}.

\subsection{Whittaker model definition} \label{ss:whittw}
Before we define $W_\l$ we introduce some notation. We choose an
isotropic subspace $\l$ of $\g(-1)$ with respect to the alternating
form $\omega = \lan \cdot | \cdot \ran$ on $\k = \g(-1)$.   The
annihilator of $\l$ with respect to $\omega$ is $\l^{\perp_\omega} =
\{x \in \k \mid \lan x|y \ran = 0 \text{ for all } y \in \l\}$.
Define the nilpotent subalgebras
$$
\m_\l = \l \oplus \bigoplus_{j < -1} \g(j) \quad \text{and} \quad
\n_\l = \l^{\perp_\omega} \oplus \bigoplus_{j < -1} \g(j).
$$

Let $I_\l$  be the left ideal of $U(\g)$ generated by $\{x-\chi(x)
\mid x \in \m_\l\}$, and define the left $U(\g)$-module $Q_\l =
U(\g)/I_\l$.  The adjoint action of $\n_\l$ on $U(\g)$ induces a
(well-defined) adjoint action on $Q_\l$. The {\em Whittaker model
definition of the finite $W$-algebra} associated to $\g$ and $e$ is
$$
W_\l = H^0(\n_\l,Q_\l) = Q_\l^{\n_\l},
$$
where the Lie algebra cohomology is taken with respect to adjoint
action of $\n_\l$ on $Q_\l$.  More explicitly, $W_\l$ is the space
of {\em twisted $\n_\l$-invariants}:
$$
W_\l = \{u + I_\l \in Q_\l \mid [x,u] \in I_\l \text{ for all } x
\in \n_\l\}.
$$
It is easy to check that multiplication in $U(\g)$ gives rise to a
well-defined multiplication on $W_\l$. We note that the definition
of $W_\l$ depends on the choice of $\l$ and on the choice of good
grading; this is discussed in Section~\ref{S:indep} where we recall
the proof from \cite[\S5.5]{GG} that $W_\l$ is independent of $\l$
up to isomorphism, and the proof of independence of good grading
from \cite[Thm.\ 1]{BG}.

In the sequel, we denote $\1_\l = 1 + I_\l \in Q_\l$, so then we
have $u\1_\l = u + I_\l$ for $u \in U(\g)$.  There is the
$U(\g)$-$W_\l$-bimodule structure on $Q_\l$: the right action of
$W_\l$ being given by $(u\1_\l)(v\1_\l) = uv\1_\l$, it is
straightforward to check that this is well-defined.  For the case
$\l = 0$, we abbreviate notation and write $I = I_\l$, $Q =
Q_\l$, $W = W_\l$ and $\1 = \1_\l$.

We now introduce some notation so that we can discuss the main
structure theorem for $W_\l$; this is required for the proof of
Theorem~\ref{T:iso}, which is a fundamental result for this paper.
The {\em Kazhdan filtration} of $U(\g)$ is defined by declaring that
$x \in \g(j)$ has Kazhdan degree $j + 2$.  The Kazhdan filtration
induces filtrations on both $Q_\l$ and $W_\l$.
As is shown in \cite[\S4]{GG},
the associated graded module $\gr Q_\l$ of $Q_\l$ can be identified with the
coordinate algebra $\C[e + \m_\l^\perp]$, where $\m_\l^\perp$
denotes the annihilator of $\m_\l$ in $\g$ with respect to the form
$(\cdot|\cdot)$.

Let $N_\l$ be the unipotent subgroup of $G$ corresponding to
$\n_\l$. The affine space $\cS = e + \g^f$ is called the {\em
Slodowy slice} to the nilpotent orbit of $e$; it is a transverse
slice to the $G$-orbit of $e$. By \cite[Lem.\ 2.2]{GG}, there is an
isomorphism of varieties $N_\l \times (e + \g^f) \isoto e +
\m_\l^\perp$, given by the adjoint action map.  As a consequence, we
obtain the identification $\gr Q_\l \iso \C[N_\l] \otimes \C[\cS]$.
The main structure theorem for $W_\l$ says that
\begin{equation} \label{e:filt}
\gr W_\l \iso \C[e+\m_\l^\perp]^{N_\l} \iso \C[\cS],
\end{equation}
where $\gr W_\l$ is the associated graded algebra of $W_\l$ with
respect to the Kazhdan filtration. This was first proved by Premet
in \cite[Thm.\ 4.6]{Pr1}; the approach followed here is that given
by Gan and Ginzburg, \cite[Thm.\ 4.1]{GG}

\subsection{Definition via non-linear Lie algebras} \label{ss:nllaw}

We begin by defining the non-linear Lie algebra $\tilde \g$.  Recall
that $\k^{\ne}$ is a ``copy'' of $\k$.  We give $\k^{\ne}$ the
structure of a nonlinear Lie algebra with bracket defined by
$[x^{\ne},y^{\ne}] = \lan x | y\ran$. The nonlinear Lie algebra
$\tilde \g = \g \oplus \k^{\ne}$ is defined by extending the bracket
on $\g$ and $\k^{\ne}$ and declaring that $[x,y^{\ne}] = 0$ for $x
\in \g$ and $y \in \k$. We define the subalgebra $\tilde \p = \p
\oplus \k^{\ne}$ of $\tilde \g$.  Note that as $\g$ commutes with
$\k^{\ne}$, we have tensor decompositions $U(\tilde \g) \iso U(\g)
\otimes U(\k^{\ne})$ and $U(\tilde \p) \iso U(\p) \otimes
U(\k^{\ne})$.  Further, $U(\k^{\ne})$ is isomorphic to the Weyl
algebra associated to $\k$ and the form $\omega$.

We define $\tilde I$ to be the left ideal of $U(\tilde \g)$
generated by $x - x^{\ne} -\chi(x)$ for $x \in \n$.  We define $\tilde Q$
to be the $U(\tilde \g)$-module $U(\tilde \g)/\tilde I$, and
denote $\tilde \1 = 1 + \tilde I \in \tilde Q$.  By the PBW
theorem for $U(\tilde \g)$ we have a direct sum decomposition
$U(\tilde \g) = U(\tilde \p) \oplus \tilde I$, so we can
identify $\tilde Q \iso U(\tilde \p)$ as vector spaces.  We write $\Pr :
U(\tilde \g) \to U(\tilde \p)$ for the projection along the above direct
sum decomposition.  There is an action of
$\n$ on $U(\tilde \p)$ by
\begin{equation} \label{e:praction}
x \cdot u = \Pr((x-x^\ne-\chi(x))u),
\end{equation}
which gives $U(\tilde \p)$ the structure of an $\n$-module;
under the identification of vector spaces $\tilde Q \iso U(\tilde \p)$
this coincides with the action of $\n$ on $\tilde Q$
by $x - x^\ne - \chi(x)$.  We note
that this action is the same as the {\em twisted adjoint action} of
$\n$ on $U(\tilde \p)$ given by
$$
x \cdot u = \Pr([x-x^\ne,u]).
$$
We define the {\em finite $W$-algebra}
$$
U(\g,e) = H^0(\n,U(\tilde \p)) = U(\tilde \p)^\n,
$$
where the cohomology is taken with respect to the action of $\n$ given in
\eqref{e:praction}. More explicitly, we have
$$
U(\g,e) = \{u \in U(\tilde \p) \mid \Pr([x-x^{\ne},u]) = 0 \text{
for all } x \in \n\}
$$
is the space of twisted $\n$-invariants in $U(\tilde \p)$. It is a
subalgebra of $U(\tilde \p)$, see \cite[Thm.\ 2.4]{BGK}.

There is an right action of $U(\g,e)$ on $\tilde Q$ making $\tilde
Q$ into a $U(\tilde \g)$-$U(\g,e)$-bimodule.  This action is given
by $(u \tilde \1)v  = (uv)\tilde \1$ for $u \in U(\tilde \p)$ and $v
\in U(\g,e)$, i.e.\ it is given by multiplication in $U(\tilde \p)$
under the identification $U(\tilde \p) \iso \tilde Q$.

We now recall the isomorphism between $U(\g,e)$ and $W$
given in \cite[Lem.\ 2.3]{BGK}, see also \cite[\S2.4]{Pr2}.
There is a well defined action of $U(\tilde
\g)$ on $Q$ given by extending the regular action of $U(\g)$ and
defining $x^\ne \cdot (u + I) = ux + I$ for $x \in \k$, and $u \in
U(\g)$. By \cite[Lem.\ 2.3]{BGK}, the natural map $U(\tilde \g) \to
Q$ given by $u \mapsto u \cdot (1+I)$ intertwines the twisted
adjoint action of $\n$ on $U(\tilde \p)$ with the adjoint action of
$\n$ on $Q$. This is used to prove \cite[Thm.\ 2.4]{BGK}, which we
state below for convenience of reference.

\begin{Lemma} \label{L:equiv}
The natural map $U(\tilde \g) \to Q$ given by $u \mapsto u \cdot
\1$ restricts to an isomorphism of vector spaces $U(\tilde \p)
\isoto Q$ and an isomorphism of algebras $U(\g,e) \isoto W$.
\end{Lemma}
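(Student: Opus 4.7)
The plan is to verify, in sequence: that the prescribed rules define a $U(\tilde \g)$-action on $Q$ through which $\tilde I$ acts trivially, so $\phi$ factors to a linear map $U(\tilde \p) \to Q$; that this map is a vector space isomorphism; that it intertwines the twisted adjoint action of $\n$ on $U(\tilde \p)$ with the adjoint action on $Q$, so that $U(\g,e)$ and $W$ correspond as subspaces; and that the resulting bijection is multiplicative.

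For well-definedness of the $U(\tilde \g)$-action, the only real issue is $I x \subset I$ for $x \in \k$, which I would extract from $(y - \chi(y))x = x(y - \chi(y)) + [y,x]$ combined with $[y,x] \in \g(\le -3) \subset \m$ and $\chi([y,x]) = 0$ (the form pairs $\g(2)$ with $\g(-2)$). The same cancellation takes care of the $\tilde \g$-relations, and a direct calculation shows that each generator $x - x^{\ne} - \chi(x)$ of $\tilde I$ annihilates $\1$. To promote the resulting map $\phi: U(\tilde \p) \to Q$ to an isomorphism I would pass to Kazhdan associated graded: by PBW $\gr U(\tilde \p) \iso S(\p) \otimes S(\k^{\ne})$, by Gan--Ginzburg \cite[Thm.\ 4.1]{GG} $\gr Q \iso \C[e + \m^\perp] \iso S(\p \oplus \k)$, and $\gr \phi$ is the tautological identification $x^{\ne} \leftrightarrow x$.

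Next I would establish the intertwining $\phi(\Pr([x - x^{\ne}, u])) = [x, \phi(u)]$ for $x \in \n$, $u \in U(\tilde \p)$. Since $\phi$ vanishes on $\tilde I$, the left side equals $\phi([x - x^{\ne}, u])$, which by the module axiom is $(x - x^{\ne}) \cdot \phi(u) - u \cdot ((x - x^{\ne}) \cdot \1)$. A short expansion using $(x - x^{\ne}) \cdot \1 = \chi(x)\1$ and $\phi(u) = u' + I$ identifies this with $[x, u'] + I = [x, \phi(u)]$ modulo $u'(x(\le -2) - \chi(x(\le -2)))$, which lies in the left ideal $I$. Combined with the vector space isomorphism, this yields the bijection $U(\g,e) = \phi^{-1}(W) \isoto W$.

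The main obstacle is multiplicativity. For $u, v \in U(\g,e)$, the module axiom gives $\phi(uv) = u \cdot \phi(v)$, while $\phi(u)\phi(v)$ is computed via the right $W$-action $(a + I)(b + I) = ab + I$ on $Q$. Writing $u = \sum_i p_i m_i$ with $p_i \in U(\p)$ and $m_i$ a monomial in $\k^{\ne}$, and $\phi(v) = b + I$, the discrepancy works out to $\sum_i p_i [b, \bar m_i] + I$, where $\bar m_i$ strips the $\ne$'s and reverses order. The essential use of $v \in U(\g,e)$ is that $b + I \in W$, so $[\n, b] \subset I$; together with $I \cdot \n \subset I$ (by the same cancellation as in the first step), a length induction gives $[b, m] \in I$ for every monomial $m$ in $\n$, so each $[b, \bar m_i] \in I$ and the proof concludes.
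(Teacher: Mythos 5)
Your proof is correct, and all four steps check out: for $y\in\m$ and $x\in\n$ (not just $\k$), $[y,x]\in\g(\le-3)\subset\m$ with $\chi([y,x])=0$ gives well-definedness of the $U(\tilde\g)$-action, the $\tilde\g$-relations, and $I\n\subset I$; the intertwining discrepancy $-u'(x(\le-2)-\chi(x(\le-2)))$ lies in the left ideal $I$; and the multiplicativity discrepancy $\sum_ip_i[\bar m_i,b]$ lies in $I$ by your length induction, using $[b,\n]\subset I$ (since $b+I\in W$), $U(\n)I\subset I$, and $I\n\subset I$. Note, though, that the paper offers no proof of Lemma~\ref{L:equiv} — it is quoted directly from \cite[Lem.~2.3 and Thm.~2.4]{BGK} — so your write-up is a reconstruction of that argument rather than a comparison target; it follows the same plan (define the action, take Kazhdan associated graded, intertwine, check multiplicativity). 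The one place to add a line is the graded step: to see that $\gr\phi$ is the tautological graded algebra map $S(\tilde\p)\to\C[e+\m^\perp]$, observe that $\phi$ sends the PBW monomial $x_1\cdots x_k(y_1^{\ne})\cdots(y_l^{\ne})$ to $x_1\cdots x_ky_l\cdots y_1+I$, whose Gan--Ginzburg symbol is the corresponding monomial in $S(\p\oplus\k)$; this is routine but is what justifies ``$x^{\ne}\leftrightarrow x$.''
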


We next discuss the Kazhdan filtration on $U(\g,e)$ recalling the
required parts of the discussion in \cite[\S3.2]{BGK}.  The Kazhdan
filtration is extended to $U(\tilde \g)$ by saying that $x^\ne \in
\k^\ne$ has degree $1$.  Then $U(\tilde \p)$ inherits a non-negative
filtration such that the associated graded algebra $\gr U(\tilde \p)
\iso S(\tilde \p)$, where the symmetric algebra $S(\tilde \p)$ has
the Kazhdan grading. The twisted adjoint action of $\n$ on $U(\tilde
\p)$ induces a graded action of $\n$ on $S(\tilde \p)$, and through
the isomorphism in Lemma~\ref{L:equiv} and \eqref{e:filt}, we get
$\gr U(\g,e) \iso H^0(\n,S(\tilde \p)) = S(\tilde \p)^\n$, where the
cohomology is taken with respect to this action.  By \cite[Lem.\
2.2]{BGK}, there is a direct sum decomposition
\begin{equation} \label{e:dirsum}
\tilde \p = \g^e \oplus \bigoplus_{j \ge 2} [f,\g(j)] \oplus \k^\ne.
\end{equation}
The projection $\tilde \p \onto \g^e$ along this decomposition
induces a homomorphism
$\zeta : S(\tilde \p) \to S(\g^e)$.
Then \cite[Lem.\ 3.5]{BGK} says that $\zeta$ restricts to an
isomorphism
\begin{equation} \label{e:zeta}
\zeta : S(\p)^\n \isoto S(\g^e).
\end{equation}

\subsection{Some structure theory of $U(\g,e)$} \label{ss:structure}

In this subsection we present some results regrading the structure
of $U(\g,e)$ that are required in the sequel.

\smallskip

First we apply the discussion of the Kazhdan filtration in the
previous subsection to show, in Lemma~\ref{L:free}, that $\tilde Q$
is free as a right $U(\g,e)$-module; this a slight generalization of
part (3) of the theorem in \cite{Sk}.  We set $\r = \bigoplus_{j \ge
2} [f,\g(j)] \oplus \k^\ne$

\begin{Lemma} \label{L:free} $ $
\begin{enumerate}
\item[(i)] $\tilde Q$ is free as a right $U(\g,e)$-module.
\item[(ii)] A basis of $\tilde Q$
as a free right $U(\g,e)$-module can be constructed as follows.
Choose a basis $(x_1^{\ne}),\dots,(x_{2s}^{\ne}),
x_{2s+1},\dots,x_r$ of $\r$. Then the monomials
$$
(x_1^{\ne})^{a_1}
\dots (x_{2s}^{\ne})^{a_{2s}} x_{2s+1}^{a_{2s+1}} \dots x_r^{a_r}
\tilde \1 \in \tilde Q
$$
 with $a_i \in \Z_{\ge 0}$ form a basis for
$\tilde Q$ as a free right $U(\g,e)$-module.
\end{enumerate}
\end{Lemma}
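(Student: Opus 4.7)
The plan is to reduce parts (i) and (ii) to a single freeness statement inside $U(\tilde \p)$ and then prove it by a two-step Kazhdan-filtration argument.

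First, Lemma~\ref{L:equiv} gives a vector space isomorphism $\tilde Q \iso U(\tilde \p)$ intertwining the right $U(\g,e)$-action on $\tilde Q$ with right multiplication in $U(\tilde \p)$ by the subalgebra $U(\g,e) \sub U(\tilde \p)$. Hence both parts reduce to showing that the multiplication map
$$
\mu : E \otimes_\C U(\g,e) \to U(\tilde \p), \qquad m \otimes v \mapsto mv,
$$
is an isomorphism, where $E \sub U(\tilde \p)$ is the subspace spanned by the PBW-ordered monomials $(x_1^\ne)^{a_1} \cdots (x_{2s}^\ne)^{a_{2s}} x_{2s+1}^{a_{2s+1}} \cdots x_r^{a_r}$ in the given basis of $\r$.

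All three objects carry the Kazhdan filtration, and $\mu$ preserves it. Since the Kazhdan filtration is non-negative and each Kazhdan-graded piece is finite-dimensional, it suffices to show the associated graded map is an isomorphism. Using the identifications $\gr U(\tilde \p) \iso S(\tilde \p)$, $\gr E \iso S(\r)$, and $\gr U(\g,e) \iso S(\p)^\n \sub S(\tilde \p)$ recalled in~\S\ref{ss:nllaw}, this associated graded map is the multiplication map
$$
\gr \mu : S(\r) \otimes S(\p)^\n \to S(\tilde \p).
$$

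To prove $\gr \mu$ is an isomorphism, I would introduce a second, decreasing, filtration by $\r$-degree. Using the decomposition $\tilde \p = \g^e \oplus \r$ of \eqref{e:dirsum}, set $F^d S(\tilde \p) = \bigoplus_{e \ge d} S^e(\r) \cdot S(\g^e)$ and analogously $\tilde F^d = \bigoplus_{e \ge d} S^e(\r) \otimes S(\p)^\n$ on the domain. Because $\r$ has strictly positive Kazhdan degree, both filtrations are bounded in each Kazhdan degree. For $\tilde y \in S(\p)^\n \sub S(\tilde \p)$, the definition of $\zeta$ as projection along $\r$ gives $\tilde y \equiv \zeta(\tilde y) \pmod{F^1 S(\tilde \p)}$; multiplying by $x \in S^d(\r)$ then yields $x \tilde y \equiv x \zeta(\tilde y) \pmod{F^{d+1} S(\tilde \p)}$. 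Thus on associated graded pieces $\gr \mu$ becomes $\id_{S^d(\r)} \otimes \zeta : S^d(\r) \otimes S(\p)^\n \to S^d(\r) \otimes S(\g^e)$, which is an isomorphism by \eqref{e:zeta}. An iterated ``iso on $\gr$ implies iso'' argument then shows that $\gr \mu$, and hence $\mu$, is an isomorphism.

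The main technical burden is the careful bookkeeping of the two nested filtrations, and verifying that each is bounded in the relevant graded pieces so that the standard filtration-lifting arguments apply at both stages; no individual step is deep, but the chain of reductions must be aligned precisely.
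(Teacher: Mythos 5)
Your proof is correct and takes essentially the same approach as the paper: pass to the Kazhdan-associated-graded level, use the decomposition $\tilde\p = \g^e \oplus \r$ of \eqref{e:dirsum} together with the isomorphism $\zeta$ to establish freeness of $\gr\tilde Q$ over $\gr U(\g,e)$, then lift via the standard non-negative filtration argument. The only difference is that you spell out explicitly (via the decreasing $\r$-degree filtration) the step the paper compresses into the phrase ``an induction on Kazhdan degree shows that $\gr\tilde Q \iso S(\r)\otimes\gr U(\g,e)$.''
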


\begin{proof}
From \eqref{e:dirsum} we get $\gr \tilde Q \iso S(\r) \otimes
S(\g^e)$. We also have the isomorphism $\gr U(\g,e) \isoto S(\g^e)$
given by the restriction of $\zeta$ from \eqref{e:zeta}.  Now an
induction on Kazhdan degree shows that $\gr \tilde Q \iso S(\r)
\otimes \gr U(\g,e)$, so that $\gr \tilde Q$ is a free right $\gr
U(\g,e)$-module.  As the Kazhdan filtration on $\tilde Q$ is
non-negative, a standard filtration argument shows that $\tilde Q$
is free as a right $U(\g,e)$-module; and moreover, if $\{\gr u_i
\tilde \1 \mid i \in J\}$, where $J$ is some indexing set, is a
basis of $\gr Q_\l$ as a free right $\gr U(\g,e)$-module, then
$\{u_i \tilde \1 \mid i \in J\}$ is a basis of $\tilde Q$ as a free
right $U(\g,e)$-module.  Now the above tensor decomposition $\gr
\tilde Q \iso S(\r) \otimes \gr U(\g,e)$ tells us that if
$(x_1^{\ne}),\dots,(x_{2s}^{\ne}), x_{2s+1},\dots,x_r$ is a basis of
$\r$, then the monomials $\gr (x_1^{\ne})^{a_1} \dots
(x_{2s}^{\ne})^{a_{2s}} x_{2s+1}^{a_{2s+1}} \dots x_r^{a_r} \tilde
\1 \in \gr \tilde Q$ form a basis for $\gr \tilde Q$ as a free right
$\gr U(\g,e)$-module.
\end{proof}

We now explain one way to construct a basis
$(x_1^{\ne}),\dots,(x_{2s}^{\ne}), x_{2s+1},\dots,x_r$ of $\r$ as in
the above lemma. Take the basis $b_1,\dots,b_r$ of $\n$, which we
recall consists of $\t$-weight vectors with $b_i \in \g(-d_i)$ and
$d_i \in \Z_{\ge 1}$; then an easy consequence of
$\sl_2$-representation theory is that for each $i$ there exists
unique $x_i \in \g(d_i-2)$ with $(x_i|[b_j,e]) = \delta_{ij}$ and
$(x_i|y) = 0$ for all $y \in \g^f$ (we assume that $b_1,\dots,b_r$
is ordered so that $x_1,\dots,x_{2s} \in \k$ and $x_{2s+1},\dots,x_r
\in \p$).

Using this basis of $\r$ , we may define the projection
\begin{equation} \label{e:chi}
\chi : \tilde Q \to U(\g,e)
\end{equation}
by $\chi((x_1^\ne)^{a_1} \dots (x_{2s}^\ne)^{a_{2s}}
x_{2s+1}^{a_{2s+1}}\dots x_r^{a_r} \tilde \1)= 0$ if $a_i \neq 0$
for some $i$, and $\chi(\tilde \1) = 1$.  We identify the associated
graded module of $\tilde Q$ with $S(\tilde \p)$, and write
\begin{equation} \label{e:eta}
\eta = \gr \chi : S(\tilde \p) \to S(\tilde \p)^\n
\end{equation}
for the associated graded map.  We record the following technical
lemma that we require in Section~\ref{S:vspiso}, it is a consequence
of \cite[Lem.\ 3.7]{BGK}.

\begin{Lemma} \label{L:zetaeta}
Let $\zeta$ be as in \eqref{e:zeta} and $\eta$ as in \eqref{e:eta}.
Then the following diagram commutes
$$
\xymatrix{ S(\tilde \p) \ar[r]^{\eta} \ar[dr]_{\zeta} & S(\tilde
\p)^\n \ar[d]^\zeta \\
& S(\g^e)}.
$$
\end{Lemma}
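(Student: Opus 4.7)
My plan is to unwind the definitions of $\chi$, $\eta$, and $\zeta$ and reduce the commutativity of the diagram to the simple observation that $\zeta$ is an algebra map which annihilates $\r$.

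First, I would rephrase $\chi$ invariantly: by Lemma~\ref{L:free}, the map $\chi : \tilde Q \to U(\g,e)$ is the right $U(\g,e)$-linear projection onto the direct summand $U(\g,e)\tilde\1$ along the decomposition
$$
\tilde Q \;=\; U(\g,e)\tilde\1 \;\oplus\; \bigoplus_{a\neq 0}
(x_1^{\ne})^{a_1}\cdots (x_{2s}^{\ne})^{a_{2s}} x_{2s+1}^{a_{2s+1}}\cdots x_r^{a_r}\, U(\g,e)\tilde\1.
$$
Passing to associated graded modules with respect to the Kazhdan filtration, $\gr\tilde Q \iso S(\tilde\p)$ and $\gr U(\g,e)\iso S(\tilde\p)^\n$, and the graded version of Lemma~\ref{L:free}(ii) furnishes a tensor decomposition
$$
S(\tilde\p)\iso S(\r)\otimes S(\tilde\p)^\n
$$
(with $S(\tilde\p)^\n$ viewed as a subspace of $S(\tilde\p)$) given by multiplication in $S(\tilde\p)$, where the monomials in the chosen basis of $\r$ furnish the left-hand factor. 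Consequently $\eta = \gr\chi$ is precisely the projection onto the $1\otimes S(\tilde\p)^\n$ summand.

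Next, I would exploit that $\zeta: S(\tilde\p)\to S(\g^e)$ is an algebra homomorphism sending every element of $\r=\bigoplus_{j\ge 2}[f,\g(j)]\oplus \k^\ne$ to zero. Given an arbitrary $s\in S(\tilde\p)$, I write
$$
s \;=\; \eta(s) \;+\; \sum_{a\ne 0}\bar x^{a}\, s_a,
$$
with $\bar x^a$ a monomial of positive degree in the chosen basis of $\r$ and $s_a\in S(\tilde\p)^\n$. Since $\zeta$ is multiplicative and $\zeta(\bar x^a)=0$ whenever $a\neq 0$, all terms in the sum vanish under $\zeta$, and hence $\zeta(s)=\zeta(\eta(s))$, which is exactly the commutativity of the diagram.

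The only point that requires care is the identification $\gr U(\g,e)\iso S(\tilde\p)^\n$ as a graded subspace of $S(\tilde\p)$ and the fact that the PBW basis of $\tilde Q$ from Lemma~\ref{L:free} degenerates to the asserted tensor decomposition of $S(\tilde\p)$; this is the content of \cite[Lem.\ 3.7]{BGK}, to which the statement already defers. Once that identification is in hand, the argument above is entirely formal.
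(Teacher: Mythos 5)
Your argument is correct, and it supplies the argument that the paper itself elides: the paper states Lemma~\ref{L:zetaeta} as ``a consequence of \cite[Lem.\ 3.7]{BGK}'' and gives no independent proof, so there is nothing in the paper against which to compare the route. Your unwinding of $\chi$ as the right $U(\g,e)$-linear projection onto the $\tilde\1$-summand, hence of $\eta = \gr\chi$ as the projection onto the $1 \otimes S(\tilde\p)^\n$ factor in the decomposition $S(\tilde\p) \iso S(\r) \otimes S(\tilde\p)^\n$ coming from the graded version of Lemma~\ref{L:free}(ii), is exactly what is needed; combined with the fact that $\zeta$ is an algebra homomorphism annihilating $\r$ (being induced by the linear projection $\tilde\p \onto \g^e$ with kernel $\r$), the identity $\zeta = \zeta \circ \eta$ follows as you argue. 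The one point worth stating a little more explicitly is that the right $\gr U(\g,e)$-module structure on $\gr \tilde Q \iso S(\tilde\p)$ is given by multiplication in $S(\tilde\p)$ (so that ``$s = \sum_a \bar x^a s_a$'' really is an identity inside the commutative ring $S(\tilde\p)$ and $\zeta$ can be applied multiplicatively); this follows because the right $U(\g,e)$-action on $\tilde Q$ is multiplication in $U(\tilde\p)$ under the identification $\tilde Q \iso U(\tilde\p)$, and this passes to multiplication in $S(\tilde\p)$ on the associated graded. With that observation recorded, the proof is complete.
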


\smallskip

Next we recall another filtration of $U(\g,e)$ defined in
\cite[\S3.3]{BGK}; this filtration is called the good filtration in
{\em loc.\ cit.}, but we choose to use the terminology loop
filtration here, as in \cite[\S2]{BK1}. The good grading on $\g$
induces a grading of $U(\p)$, which we extend to a grading of
$U(\tilde \p)$ by declaring that elements of $\k^\ne$ have degree
$0$. Then $U(\g,e)$ is not in general a graded subalgebra of
$U(\tilde \p)$ but there is an induced filtration $(F_j' U(\g,e))_{j
\in \Z_{\ge 0}}$ of $U(\g,e)$ called the {\em loop filtration}. The
associated graded algebra $\gr' U(\g,e)$ is identified with a
subalgebra of $U(\tilde \p)$; in order to explicitly describe this
subalgebra we need to give some notation.

Let $z_1,\dots,z_{2s}$ be a symplectic basis for $\k$,
so that $\lan z_i | z_j^* \ran = \delta_{i,j}$ for all $1\leq
i,j\leq 2s$ where
$$
z_j^* := \left\{
\begin{array}{rl}
z_{j+s}&\text{for $j=1,\dots,s$,}\\
-z_{j-s}&\text{for $j=s+1,\dots,2s$.}
\end{array}\right.
$$
The Lie algebra homomorphism $\theta : \g^e \into U(\tilde \p)$ is
defined in \cite[Thm.\ 3.3]{BGK} by
\begin{equation} \label{e:theta0}
\theta(x) =
\left\{
\begin{array}{ll}
x + {\textstyle \frac{1}{2}} \sum_{i=1}^{2s} [x, z_i^*]^\ne z_i^\ne
&\text{if $x \in \g^e(0)$,}\\
x&\text{otherwise,}
\end{array}\right.
\end{equation}
which restricts to a Lie algebra homomorphism $\g^e(0) \into
U(\g,e)$.  We can extend $\theta$ to an algebra homomorphism $\theta
: U(\g^e) \to U(\tilde \p)$, and note that $U(\g^e)$ is graded from
the good grading on $\g$. Then \cite[Thm.\ 3.8]{BGK} says that
$\theta$ gives a $\t^e$-equivariant graded algebra isomorphism
\begin{equation} \label{e:theta}
\theta : U(\g^e) \isoto \gr' U(\g,e).
\end{equation}

\smallskip

Lastly in this subsection, we summarize \cite[Thm.\ 3.6 and Lem.\
3.7]{BGK}, which gives an explicit description of the structure of
$U(\g,e)$; we note that \cite[Thm.\ 3.6]{BGK} is essentially
\cite[Theorem 4.6]{Pr1}.

To begin, we note that \eqref{e:zeta} implies that there exists a
(non-unique) linear map
\begin{equation} \label{e:Theta}
\Theta:\g^e\into U(\g,e)
\end{equation}
such that $\Theta(x) \in F_{j+2} U(\g,e)$ and $\zeta(\gr_{j+2}
\Theta(x)) = x$ for each $x \in \g^e(j)$. We can choose $\Theta$ so
that it is $\t^e$-equivariant with respect to the embedding of
$\t^e$ in $U(\g,e)$ given by $\theta$; and such that $\gr' \Theta(x)
= \theta(x)$ for each $x \in \g^e(j)$ and $\Theta(t) = \theta(t)$
for each $t \in \g^e(0)$.

Now let $x_1,\dots,x_t$ is a basis of $\g^e$ that is homogeneous
with respect to the good grading and consists of $\t^e$-weight
vectors; say $x_i \in \g^e(n_i)$ with $\t^e$-weight $\gamma_i$. Then
for $j \ge 0$ the monomials
$$
\{\Theta(x_1)^{a_1} \dots \Theta(x_t)^{a_t} \mid a_i \in \Z_{\ge 0},
{\textstyle \sum_{i=1}^t} a_i (n_i+2) \le j \}
$$
form a basis for $F_j U(\g,e)$.

\subsection{Version of Skryabin's equivalence} \label{ss:nllaskryb}

We give a version of Skryabin's equivalence to obtain an equivalence
from the category $U(\g,e)$\-mod of $U(\g,e)$-modules to the
category $\Wh(\tilde \g,e)$ of {\em generalized Whittaker $U(\tilde
\g)$-modules} defined below. This equivalence is required in the
sequel; more precisely it is needed for the definition of
translation in \S\ref{ss:nllatrans} and we require
Corollary~\ref{C:vanish} in \S\ref{ss:brsttrans}. The proof of the
equivalence follows the same lines as that given for Skryabin's
equivalence in \cite[Thm.\ 6.1]{GG}.

Given a $U(\tilde \g)$-module $E$, there is an action of $\n$ on
$E$ defined by
\begin{equation} \label{e:dot}
x \cdot m = (x - x^\ne - \chi(x))m
\end{equation}
and henceforth referred to as the {\em dot action}.
We call $E$ a {\em generalized
Whittaker module} (with respect to $e$ and $\n$) if $x - x^\ne -
\chi(x)$ acts locally nilpotently on $E$ for all $x \in \n$, i.e.\
each $x \in \n$ acts locally nilpotently in the dot action. We write
$\Wh(\tilde \g,e)$ for the category of {\em generalized Whittaker
$U(\tilde \g)$-modules}.

Recall the $U(\tilde \g)$ module $\tilde Q$ from \S\ref{ss:nllaw}, it is
clear that this is a generalized Whittaker module.  It is also a right
$U(\g,e)$-module as explained in \S\ref{ss:nllaw}.
Thus there is a functor
$$
\tilde Q \otimes_{U(\g,e)} ? : U(\g,e)\-mod \to \Wh(\tilde
\g,e).
$$
There is also a functor
$$
H^0(\n,?): \Wh(\tilde \g,e) \to \tilde U(\g,e)\-mod,
$$
where the cohomology is taken with respect to the dot action of
$\n$; so that we have
$$
H^0(\n,E) = \{m \in E \mid (x-x^{\ne}-\chi(x))m = 0 \text{ for all }
x \in \n\}.
$$
It is straightforward to check that $H^0(\n,E)$ is a well-defined
$U(\g,e)$-module where the action is given by restricting the
$U(\tilde \g)$ action on $E$.

We are now in a position to state our version of Skryabin's
equivalence.  We only show how one can apply the proof of
\cite[Thm.\ 6.1]{GG}.

\begin{Theorem} \label{T:skryabin}
The functors $\tilde Q \otimes_{U(\g,e)} ?$ and $H^0(\n,?)$ are
quasi-inverse equivalences of categories.
\end{Theorem}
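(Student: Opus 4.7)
The plan is to transport the proof of Skryabin's equivalence from \cite[Thm.\ 6.1]{GG} to the non-linear Lie algebra setting, replacing the twisted adjoint action on $Q_\chi$ by the dot action \eqref{e:dot} on $\tilde Q$ and using Lemma~\ref{L:free} in place of the corresponding freeness statement in \emph{loc.\ cit.}

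First I would check that
$$
\eta_M : M \to H^0(\n, \tilde Q \otimes_{U(\g,e)} M), \quad m \mapsto \tilde\1 \otimes m,
$$
$$
\epsilon_E : \tilde Q \otimes_{U(\g,e)} H^0(\n, E) \to E, \quad u\tilde\1 \otimes m \mapsto um,
$$
are well-defined natural transformations (using that $\tilde\1$ lies in $H^0(\n,\tilde Q)$ by construction of $\tilde I$) realizing the adjunction $\tilde Q \otimes_{U(\g,e)} ? \dashv H^0(\n,?)$; this comes formally from the tensor--hom adjunction together with the identification $\operatorname{Hom}_{U(\tilde\g)}(\tilde Q, E) \iso H^0(\n, E)$. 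It then suffices to show that $\eta_M$ and $\epsilon_E$ are isomorphisms.

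I would tackle $\epsilon_E$ first. By Lemma~\ref{L:free}(ii) every element of the tensor product has a unique expression $\sum_{\mathbf a} u_{\mathbf a}\tilde\1 \otimes m_{\mathbf a}$ with $m_{\mathbf a} \in H^0(\n, E)$ and the $u_{\mathbf a}$ ranging over the explicit PBW monomials in the basis of $\r$. For injectivity, iterate the dot action of elements of $\n$ on the image $\sum u_{\mathbf a} m_{\mathbf a}$ in $E$: working on the associated graded for the Kazhdan filtration, these iterated actions pair non-degenerately with the chosen monomials, which isolates the coefficients $m_{\mathbf a}$ and forces them to vanish. For surjectivity, filter $E$ by the subspaces on which the dot action is nilpotent of bounded degree (which exhausts $E$ by hypothesis) and construct preimages by induction up the filtration, again using the explicit basis from Lemma~\ref{L:free}(ii).

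For $\eta_M$, the triangle identity gives $\epsilon_{\tilde Q \otimes_{U(\g,e)} M} \circ (\tilde Q \otimes_{U(\g,e)} \eta_M) = \id$; since $\epsilon$ is an isomorphism so is $\tilde Q \otimes_{U(\g,e)} \eta_M$. Lemma~\ref{L:free}(i) shows that $\tilde Q$ is nonzero and free, hence faithfully flat as a right $U(\g,e)$-module, so $\tilde Q \otimes_{U(\g,e)} ?$ reflects isomorphisms and $\eta_M$ itself is an isomorphism. The main obstacle is the injectivity part of $\epsilon_E$: one must verify that the transversality step of Gan and Ginzburg survives the switch from $Q$ to $\tilde Q$, with the $\k^\ne$-generators now entering via the Weyl algebra part of $U(\tilde\g)$ and the dot action involving the subtraction of $x^\ne$. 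The Kazhdan filtration reduces the question to $\gr \tilde Q \iso S(\r) \otimes S(\g^e)$, at which point the classical symplectic transversality calculation of \cite{GG} applies.
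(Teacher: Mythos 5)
Your proposal is correct in outline but organizes the argument differently from the paper, and the difference is worth noting. The paper's proof is a reduction: it uses Lemma~\ref{L:equiv} to identify $\tilde Q$ with $Q$ and $U(\g,e)$ with $W$, checks that the dot action of $\n$ on $\tilde Q \otimes_{U(\g,e)} M$ transports to the adjoint action of $\n$ on $Q \otimes_W M$, and then invokes the Gan--Ginzburg argument wholesale for \emph{both} the unit $\phi$ and the counit $f$. You instead stay entirely in the nonlinear Lie algebra setting, replay the Gan--Ginzburg-style Kazhdan filtration argument using the explicit free basis of Lemma~\ref{L:free}(ii) to prove the counit $\epsilon_E$ is an isomorphism, and then derive the unit $\eta_M$ \emph{formally}: the triangle identity gives $\epsilon_{\tilde Q \otimes M} \circ (\tilde Q \otimes \eta_M) = \id$, and since $\tilde Q$ is free and nonzero as a right $U(\g,e)$-module (Lemma~\ref{L:free}(i)) it is faithfully flat, hence $\tilde Q \otimes_{U(\g,e)} ?$ reflects isomorphisms. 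That faithful-flatness step is a genuine simplification the paper does not use: it dispenses with a separate direct argument for the unit. The trade-off is that you must reprove the transversality/filtration lemma in the $\tilde Q$ setting rather than simply citing it after an identification, and your injectivity sketch is a little loose on what Kazhdan filtration one places on an arbitrary $E \in \Wh(\tilde\g,e)$ -- what you really want (and what you do say for surjectivity) is the exhaustive filtration of $E$ by the subspaces on which the dot action of $\n$ is nilpotent of bounded order, with the Kazhdan-graded transversality pairing used in the inductive step. Also worth stating explicitly when you apply $\epsilon$ to $E = \tilde Q \otimes_{U(\g,e)} M$: that this object lies in $\Wh(\tilde\g,e)$, which the paper records separately.
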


\begin{proof}
For $M \in U(\g,e)\-mod$, we have a natural map
$\phi : M \to H^0(\n,\tilde Q \otimes_{U(\g,e)} M)$ given by
$\phi(m) = \tilde \1 \otimes m$.
Using Lemma~\ref{L:equiv} we may identify $\tilde Q$ with $Q$ as a
filtered vector space and $U(\g,e)$ with $W$.  The dot action of
$\n$ on $\tilde Q \otimes_{U(\g,e)} M$ is same as the
twisted adjoint action given by $x \cdot (u\tilde \1 \otimes m) =
[x - x^{\ne},u]\tilde \1 \otimes m$, for $x \in \n$.
Under the identification of modules $\tilde Q \otimes_{U(\g,e)}
M \iso Q \otimes_W M$, the discussion before Lemma~\ref{L:equiv}
tells us that the dot action of $\n$ on $U(\tilde \p)
\otimes_{U(\g,e)} M$ is identified with the adjoint action of $\n$
on $Q \otimes_W M$. Now one can now prove that $\phi$ is an
isomorphism as in \cite[Thm.\ 6.1]{GG}.

For $E \in \Wh(\tilde \g)$, there is a natural map
$f: \tilde Q \otimes_{U(\g,e)} H^0(\n_\l,E) \to E$ given by
$f(u\tilde \1 \otimes v) = uv$.   The arguments in the proof of
\cite[Thm.\ 6.1]{GG} apply in our situation to prove that $f$ is an
isomorphism.
\end{proof}

Once we have the identifications $\tilde Q \iso Q$ and $U(\g,e)
\iso W$ used in the proof of Theorem~\ref{T:skryabin}, one can
deduce from the proof of \cite[Thm.\ 6.1]{GG} that $H^i(\n,\tilde Q
\otimes_{U(\g,e)}  M) = 0$ for $i > 0$, where the cohomology is
taken with respect to the dot action of $\n$. Therefore, we obtain
the following corollary, which we require in \S\ref{ss:brsttrans};
it is a slight generalization of part (4) of the theorem in \cite{Sk}.

\begin{Corollary} \label{C:vanish}
Let $E \in \Wh(\tilde \g,e)$.  Then $H^i(\n,E) = 0$ for $i > 0$,
where the cohomology is taken with respect to the dot action of
$\n$.
\end{Corollary}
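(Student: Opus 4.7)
The strategy is to reduce the vanishing statement to the corresponding fact about $Q \otimes_W M$ in the Whittaker model setting, where it is already established in the proof of \cite[Thm.\ 6.1]{GG}. The key point is that Skryabin's equivalence (Theorem~\ref{T:skryabin}) allows us to replace an arbitrary $E \in \Wh(\tilde\g,e)$ by a tensor product of the form $\tilde Q \otimes_{U(\g,e)} M$.

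First I would set $M = H^0(\n, E)$, where the cohomology is computed with respect to the dot action, and use Theorem~\ref{T:skryabin} to obtain a $U(\tilde\g)$-module isomorphism $E \iso \tilde Q \otimes_{U(\g,e)} M$. Under this isomorphism the dot action of $\n$ on $E$ corresponds to the dot action on $\tilde Q \otimes_{U(\g,e)} M$, which by construction acts only on the first tensor factor: $x \cdot (u\tilde\1 \otimes m) = [x - x^\ne, u]\tilde\1 \otimes m$ for $x \in \n$. Thus the statement reduces to showing $H^i\bigl(\n, \tilde Q \otimes_{U(\g,e)} M\bigr) = 0$ for $i > 0$.

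Next I would invoke the identifications from Lemma~\ref{L:equiv}: namely $\tilde Q \iso Q$ as filtered vector spaces and $U(\g,e) \iso W$ as algebras. As recalled in the proof of Theorem~\ref{T:skryabin}, these identifications intertwine the dot action of $\n$ on $\tilde Q \otimes_{U(\g,e)} M$ with the ordinary adjoint action of $\n$ on $Q \otimes_W M$. Hence it suffices to show $H^i(\n, Q \otimes_W M) = 0$ for $i > 0$, computed with respect to the adjoint action.

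Finally, this last vanishing is exactly what is proved in the course of establishing \cite[Thm.\ 6.1]{GG}; the argument there filters $Q \otimes_W M$ by the Kazhdan filtration (using that the Kazhdan filtration on $Q$ is non-negative and that $\gr Q$ is a free $\gr W$-module by an analogue of Lemma~\ref{L:free}), reducing the computation to the Koszul-type acyclicity of the associated graded complex, where $\n$ acts on $\C[e + \m^\perp] \otimes_{\C[\cS]} \gr M$ by translation along the orbit directions of the isomorphism $N \times \cS \isoto e + \m^\perp$. The main technical point, and the only nontrivial step that needs verifying rather than merely transporting, is that the identification of dot and adjoint actions carries through at the level of cochain complexes computing Lie algebra cohomology, not just at the level of modules; but this is immediate from the fact that the Chevalley--Eilenberg differential for $\n$ depends only on the $\n$-module structure, which is preserved under the identification.
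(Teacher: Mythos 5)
Your proposal follows the same route as the paper: reduce via Skryabin's equivalence (Theorem~\ref{T:skryabin}) to modules of the form $\tilde Q \otimes_{U(\g,e)} M$, transport to the Whittaker model setting through Lemma~\ref{L:equiv}, and then cite the vanishing established in the proof of \cite[Thm.\ 6.1]{GG}. The paper leaves the first reduction implicit ("Therefore, we obtain the following corollary"), while you spell it out, but the argument is essentially identical.
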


\section{Definition of translation} \label{S:trans}

In this section we give the definition of translation in both the
Whittaker model definition and in the definition via non-linear Lie
algebras.  In Lemma~\ref{L:transequiv}, we prove that these
definitions are equivalent (for $\l =0$) through the isomorphism in
Lemma~\ref{L:equiv}.  We note that the definition of translation of
$W_\l$-modules depends on the choice of $\l$, and on the choice of
good grading.  In Section~\ref{S:indep}, we show that in fact the
definition is independent up to isomorphism in the appropriate
sense, which justifies us considering only translations of
$U(\g,e)$-modules in the rest of the paper.

\subsection{Translation of $W_\l$-modules} \label{ss:whitttrans}
The definition of translation for $W_\l$-modules given below
is a generalization of the definition given in \cite[\S8.2]{BK2}
for the case when $\g = \gl_n$ and the good grading for $e$ is even.

Let $M$ be a $W_\l$-module and let $V$ be a finite dimensional
$U(\g)$-module.  As explained in \S\ref{ss:whittw},
 $Q_\l$ has the structure of a
right $W_\l$-module and the tensor product $Q_\l \otimes_{W_\l} M$
is a $U(\g)$-module.  Further, there is an (adjoint) action of
$\n_\l$ on $Q_\l \otimes_{W_\l} M$ given by
$$
x \cdot (u\1_\l \otimes m) = [x,u]\1_\l \otimes m,
$$
for $x \in \n$, $u \in U(\g)$ and $m \in M$;
it is straightforward to check that this action is well defined and
gives $Q_\l \otimes_{W_\l} M$ the structure of an $\n_\l$-module.
Therefore, the tensor product $(Q_\l \otimes_{W_\l} M) \otimes V$ is
an $\n_\l$-module, where $\n_\l$ is acting on $V$ by restriction of
the $\g$-action and on the tensor product through the
comultiplication in $U(\n_\l)$.  We refer to this action of $\n_\l$
as the {\em dot action}; it is given explicitly by
\begin{equation} \label{e:nact}
x \cdot ((u\1_\l \otimes m) \otimes v) = ([x,u]\1_\l \otimes m)
\otimes v + (u\1_\l \otimes m) \otimes xv.
\end{equation}

\begin{Definition} \label{D:trans}
We define the {\em translation of $M$ by $V$}
$$
M \ostar_\l V = H^0(\n_\l,(Q_\l \otimes_{W_\l} M) \otimes V),
$$
where the cohomology is taken with respect to the dot action of
$\n_\l$, i.e.\ $M \ostar_\l V$ is the space of invariants of $(Q_\l
\otimes_{W_\l} M) \otimes V$ with respect to the dot action of
$\n_\l$.
\end{Definition}

We have that $(Q_\l \otimes_{W_\l} M) \otimes V$ is a $U(\g)$-module
through the comultiplication in $U(\g)$.  It is straightforward to
check that this gives rise to a well-defined $W_\l$-module structure
on $M \ostar_\l V$ with action defined by $u\1_\l z = uz$ for
$u\1_\l \in W_\l$ and $z \in (Q_\l \otimes_{W_\l} M) \otimes V$.

\subsection{Translation of $U(\g,e)$-modules} \label{ss:nllatrans}

In this subsection we define translation for $U(\g,e)$-modules. We
show, in Lemma~\ref{L:transequiv} that this definition agrees with
that for $W$-modules through the isomorphism given in Lemma~\ref{L:equiv}.

First we define a ``comultiplication'' $\tilde \Delta : U(\tilde \g)
\to U(\tilde \g) \otimes U(\g)$ by
$$
\tilde \Delta(x) = x \otimes 1 + 1 \otimes x \quad \text{ and }
\quad \tilde \Delta(y^{\ne})  = y^{\ne} \otimes 1
$$
for $x \in \g$ and $y \in \k$.
It is trivial to check that $\tilde \Delta([x,y]) = \tilde \Delta(x)
\tilde \Delta(y) - \tilde \Delta(y) \tilde \Delta(x)$ for all $x, y
\in \tilde \g$.   Therefore, given a $U(\tilde \g)$-module $E$ and a
$U(\g)$-module $V$, we can give $E \otimes V$ the structure of a
$U(\tilde \g)$, through $\tilde \Delta$.  Moreover, if $V$ is finite
dimensional, then $? \otimes V$ defines an exact endofunctor of
$U(\tilde \g)$-mod. It is clear that if $E \in \Wh(\tilde \g, e)$,
then $E \otimes V \in \Wh(\tilde \g, e)$.  Therefore, we may
transport the functor $? \otimes V$ through the version of
Skryabin's equivalence in Theorem~\ref{T:skryabin} to obtain an
exact endofunctor of $U(\g,e)$-mod, as defined below.

For the rest
of this subsection let $M$ be a $U(\g,e)$-module and $V$ a finite dimensional
$U(\g)$-module.

\begin{Definition} \label{D:nllatrans}
The {\em translation of $M$ by $V$} is defined
to be
$$
M \ostar V = H^0(\n,(\tilde Q \otimes_{U(\g,e)} M) \otimes V),
$$
where the cohomology is taken with respect to the dot action of
$\n$ given in \eqref{e:dot}.
\end{Definition}

To be more explicit we note that the dot action of $\n$ on
$(\tilde Q \otimes_{U(\g,e)} M) \otimes V$ is given by
\begin{equation} \label{e:dotnlla}
x \cdot (u\tilde \1 \otimes m \otimes v) = [x-x^{\ne},u]\tilde \1 \otimes m
\otimes v + u\tilde \1 \otimes m \otimes xv,
\end{equation}
and $M \ostar V$ is the space of invariants for this action.

The next lemma says that our definitions of translation are
equivalent, it follows easily from Lemma~\ref{L:equiv} using
\eqref{e:dotnlla} and \eqref{e:nact}.

\begin{Lemma} \label{L:transequiv}
View $M$ as a $W$-module through the isomorphism
given in Lemma~\ref{L:equiv}.  Then the natural map $U(\tilde \g)
\to Q$ given by $u \mapsto u \cdot \1$ induces an isomorphism
$$
(\tilde Q \otimes_{U(\g,e)} M) \otimes V \isoto (Q \otimes_W M)
\otimes V,
$$
which intertwines the dot actions of $\n$ on $(\tilde Q
\otimes_{U(\g,e)} M)$ and $(Q \otimes_W M) \otimes V$.  Therefore,
we have
$$
H^0(\n,(\tilde Q \otimes_{U(\g,e)} M) \otimes V) \iso H^0(\n,(Q
\otimes_W M) \otimes V).
$$
\end{Lemma}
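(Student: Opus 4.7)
The plan is to build the desired isomorphism by transporting the natural map of Lemma~\ref{L:equiv} through the tensor product constructions, and then check $\n$-equivariance term by term.

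First, Lemma~\ref{L:equiv} supplies a vector space isomorphism $U(\tilde\p)\isoto Q$ and an algebra isomorphism $U(\g,e)\isoto W$ via $u\mapsto u\cdot\1$. Identifying $\tilde Q$ with $U(\tilde\p)$ as in \S\ref{ss:nllaw}, this becomes an isomorphism $\tilde Q\isoto Q$ of right modules (over $U(\g,e)$ on one side, over $W$ on the other, via the algebra isomorphism). Since this map is both right linear and a $U(\g)$-module map for the left action, tensoring on the right with $M$ over $U(\g,e)\iso W$ produces a vector-space isomorphism
$$
\Phi:(\tilde Q\otimes_{U(\g,e)} M)\otimes V\isoto (Q\otimes_W M)\otimes V,
$$
given on pure tensors by $u\tilde\1\otimes m\otimes v\mapsto u\1\otimes m\otimes v$ for $u\in U(\tilde\p)$.

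Second, I would verify that $\Phi$ intertwines the two dot actions. The discussion preceding Lemma~\ref{L:equiv} records that $u\mapsto u\cdot\1$ already intertwines the twisted adjoint action $u\mapsto\Pr([x-x^{\ne},u])$ of $\n$ on $U(\tilde\p)$ with the ordinary adjoint action of $\n$ on $Q$. Extending by the identity on $M$ and $V$ shows that the first summand in \eqref{e:dotnlla},
$[x-x^{\ne},u]\tilde\1\otimes m\otimes v$, is sent to $[x,u]\1\otimes m\otimes v$, which is exactly the first summand in \eqref{e:nact}. For the second summand one uses that $\tilde\Delta(x^{\ne})=x^{\ne}\otimes 1$, so the coproducts $\tilde\Delta$ and $\Delta$ agree on the factor acting on $V$: both contribute $u\tilde\1\otimes m\otimes xv$ and $u\1\otimes m\otimes xv$ respectively. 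Thus $\Phi$ is $\n$-equivariant.

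Finally, since $\Phi$ is an isomorphism of $\n$-modules, passing to $H^0(\n,-)$ yields the asserted isomorphism of invariants. The main obstacle is purely bookkeeping: one must be careful that the algebra isomorphism $U(\g,e)\isoto W$ is actually used to identify the scalar rings so that $\otimes_{U(\g,e)}$ and $\otimes_W$ can be compared, and that the term $-x^{\ne}$ in the dot action of $\n$ really only affects the $\tilde Q$-factor (because $\tilde\Delta(x^{\ne})=x^{\ne}\otimes 1$)---this is exactly what makes the two dot actions transport to each other.
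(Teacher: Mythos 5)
Your proposal is correct and matches the approach the paper has in mind: the paper gives no explicit proof, simply asserting that the lemma ``follows easily from Lemma~\ref{L:equiv} using \eqref{e:dotnlla} and \eqref{e:nact},'' and your argument spells out exactly that—transporting the bimodule isomorphism $\tilde Q\isoto Q$ through the tensor products and invoking the intertwining property (recorded just before Lemma~\ref{L:equiv}) to match the two dot actions summand by summand.
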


\section{Vector space isomorphism and lift matrices} \label{S:vspiso}

In this section we show that the translation $M \ostar V$ of a
$U(\g,e)$-module $M$ by a $U(\g)$-module $V$ is isomorphic as a
vector space to $M \otimes V$.  We initially do this for the
Whittaker model definition by considering suitable Kazhdan
filtrations in \S\ref{ss:kazh}.  Then we discuss explicit
isomorphisms using {\em lift matrices} in \S\ref{ss:lift}. Lastly we
consider the relationship between translations and the loop
filtration in \S\ref{ss:loop}.

\subsection{The Kazhdan filtration} \label{ss:kazh}

We use the notation from \S\ref{ss:whittw}.  Let $M$ be a
$W_\l$-module and $V$ a finite dimensional $U(\g)$-module. Below we
consider a Kazhdan filtration of $M \ostar_\l V$ and show that the
associated graded module $\gr (M \ostar_\l V)$ is a $\gr
W_\l$-module isomorphic as a vector space to $\gr M \otimes V$.

Choose a finite dimensional subspace $M_0$ of $M$ that generates $M$
and define $F_j M = (F_j W_\l) M_0$,
where $F_j W_\l$ denotes the $j$-part of the Kazhdan filtration on
$W_\l$.  This defines a (Kazhdan) filtration on $M$, so that $M$ is
a filtered $W_\l$-module; thus $\gr M$ is a $\gr W_\l$-module. The
semisimple element $c \in \g$ (that defines the good grading for
$e$) acts diagonally on $V$, and its eigenspace decomposition gives
a grading and thus also a filtration of $V$. These filtrations on
$M$ and $V$ along with the Kazhdan filtration on $Q_\l$ determine a
filtration of $(Q_\l \otimes_{W_\l} M) \otimes V$.  It is clear that
$(Q_\l \otimes_{W_\l} M) \otimes V$ is a filtered $U(\g)$-module for
the Kazhdan filtration; therefore, $M \ostar_\l V$ is a filtered
$W_\l$-module and the associated graded module $\gr (M \ostar_\l V)$
is a module for $\gr W_\l$. Also we can give $\gr M \otimes V$ the
structure of a $\gr W_\l$-module with trivial action on $V$, i.e.\
$u (m \otimes v) = um \otimes v$ for $u \in \gr W_\l$, $m \in \gr M$
and $v \in V$.

We are now in a position to state and prove the main theorem of this
subsection, which, in particular, implies that there is a vector
space isomorphism $M \ostar_\l V \iso M \otimes V$.

\begin{Theorem} \label{T:iso}
There is an isomorphism of $\gr W_\l$-modules
$$
\gr(M \ostar_\l V) \iso \gr M \otimes V.
$$
\end{Theorem}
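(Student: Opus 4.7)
\textbf{Proof proposal for Theorem~\ref{T:iso}.}
The plan is to exploit the Kazhdan filtration on $(Q_\l \otimes_{W_\l} M) \otimes V$ and show that after passing to associated gradeds, the problem reduces to a computation of $\n_\l$-invariants on a ``free'' object built from $\gr Q_\l \iso \C[N_\l]\otimes\C[\cS]$. Concretely, I first check that the Kazhdan filtration on $(Q_\l \otimes_{W_\l} M) \otimes V$ is compatible with the dot action of $\n_\l$: for $x \in \g(-j) \cap \n_\l$ with $j \ge 1$, both summands in \eqref{e:nact} shift total Kazhdan degree by $-j$, because $[x,u]$ drops two degrees below $xu$ (the leading symbol commutes in $\gr U(\g)$) and because $x \in \g(-j)$ sends the $c$-eigenspace filtration $F_k V$ into $F_{k-j}V$. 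Thus the dot action induces a graded action of $\n_\l$ on $\gr\big((Q_\l \otimes_{W_\l} M) \otimes V\big)$, and $\gr(M \ostar_\l V)$ maps naturally into the invariants of this action.

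Next I identify the associated graded explicitly. Using \eqref{e:filt} and a standard flatness argument based on the PBW-type basis of $Q_\l$ over $W_\l$ (the analogue of Lemma~\ref{L:free} on the Whittaker side), I obtain
$$
\gr\big((Q_\l \otimes_{W_\l} M) \otimes V\big) \;\iso\; \gr Q_\l \otimes_{\gr W_\l} \gr M \otimes V \;\iso\; \C[N_\l] \otimes \gr M \otimes V
$$
as graded $\gr W_\l$-modules, where $\gr W_\l \iso \C[\cS]$ acts on the third factor trivially. Under this identification, the induced $\n_\l$-action sends $x \in \n_\l$ to the sum of (i) the left-invariant vector field $\xi_x$ on $N_\l$ coming from the symbol of the adjoint action on $Q_\l$ (it acts trivially on the $\C[\cS]$-part, and hence on $\gr M$), and (ii) the usual $x$-action on $V$.

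I then compute the invariants. Using the trivialisation
$\C[N_\l]\otimes V \iso \C[N_\l,V]$ (regular maps $N_\l\to V$) and the fact that $N_\l$ is a connected unipotent group, the diagonal action under consideration becomes $(x\cdot\psi)(n)=x\psi(n)-\xi_x(\psi)(n)$, whose invariants are precisely the maps $\psi(n)=n\cdot v$ for some $v \in V$. Evaluation at the identity $\psi\mapsto \psi(1)$ therefore yields
$$
\big(\C[N_\l]\otimes \gr M \otimes V\big)^{\n_\l} \;\iso\; \gr M \otimes V
$$
as $\C[\cS]$-modules. Finally I need $\gr(M\ostar_\l V)$ to \emph{equal}, rather than merely inject into, the right-hand side. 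For this I show that any element of the invariants of the associated graded lifts to an element of $M\ostar_\l V$ of the same leading Kazhdan degree; this is done by a standard convergent successive-approximation argument, noting that the Kazhdan filtration is non-negative on $W_\l$ and bounded below on $M$ and $V$, so the error terms strictly decrease in degree and the lift stabilises. The main obstacle is this last step of promoting the graded isomorphism to the true associated graded of the invariants; it is essentially a cohomology-vanishing statement analogous to Corollary~\ref{C:vanish} applied levelwise to the Kazhdan filtration, and it is where compatibility of the filtrations with $N_\l$-action and the non-negativity of the Kazhdan degrees on $W_\l$ and $\gr W_\l$ is used crucially.
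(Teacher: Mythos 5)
Your proposal follows the same route as the paper: filter by Kazhdan degree, identify $\gr\bigl((Q_\l \otimes_{W_\l} M) \otimes V\bigr) \iso \C[N_\l]\otimes \gr M \otimes V$, compute $\n_\l$-invariants via evaluation at the identity, and then show that passing to $\gr$ commutes with taking $\n_\l$-invariants. Your final step, phrased as ``successive approximation'', is the same content as the paper's spectral-sequence degeneration; both need exactly that $H^{>0}(\n_\l, \gr((Q_\l \otimes_{W_\l} M) \otimes V)) = 0$. One imprecision: you attribute the required vanishing to ``Corollary~\ref{C:vanish} applied levelwise'', but that corollary concerns generalized Whittaker $U(\tilde\g)$-modules and gives $H^{>0}$ vanishing for the un-graded module, which is not what drives the spectral-sequence collapse; what is actually needed, and what the paper proves, is $H^{>0}(\n_\l, \C[N_\l]\otimes V) = 0$, obtained by filtering $V$ and reducing to the de Rham cohomology of the affine space $N_\l$. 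You should also add the short verification that the resulting isomorphism $\gr(M \ostar_\l V) \isoto \gr M \otimes V$ intertwines the $\gr W_\l$-actions, since that is part of the statement.
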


\begin{proof}
The good grading of $\g$ gives a grading, and so a filtration, of
$\n_\l$. One can check that $(Q_\l \otimes_{W_\l} M) \otimes V$ is a
filtered $\n_\l$-module for the dot action of $\n_\l$ given by
\eqref{e:nact}, therefore, $\gr((Q_\l \otimes_{W_\l} M) \otimes V)$
is a module for $\gr \n_\l = \n_\l$. We have
\begin{align*}
\gr ((Q_\l \otimes_{W_\l} M) \otimes V) & \iso (\gr Q_\l
\otimes_{\gr W_\l} \gr M) \otimes \gr V \\ & \iso (\C[N_\l] \otimes
\C[\cS]) \otimes_{\C[\cS]} \gr M \otimes V \\ &\iso \C[N_\l] \otimes
\gr M \otimes V.
\end{align*}
In the above, we use the isomorphisms $\gr W_\l \iso \C[\cS]$ and
$\gr Q_\l \iso \C[N] \otimes \C[\cS]$ from \S\ref{ss:whittw}, and
the fact that $\gr V \iso V$, because $V$ is already graded.  We
explain the action of $\n_\l$ on $\C[N_\l] \otimes \gr M \otimes V$.
The action of $N_\l$ on itself by left translations induces a
locally finite representation of $N_\l$ in $\C[N_\l]$ by $(x \cdot
f)(n) = f(x^{-1}n)$.  The action of $\n_\l$ on $V$ can be
exponentiated to give an action of $N_\l$ on $V$.  Then $\C[N_\l]
\otimes V$ is a locally finite $N_\l$-module with the diagonal
action of $N_\l$, and $\C[N_\l] \otimes \gr M \otimes V$ is a
locally finite $N_\l$-module with trivial $N_\l$ action on $\gr M$.
This $N_\l$-module structure differentiates to give the
$\n_\l$-module structure on $\C[N_\l] \otimes \gr M \otimes V$.
Taking $\n_\l$-invariants in $\C[N_\l] \otimes \gr M \otimes V$ is
the same as taking $N_\l$-invariants so we get
\begin{align*} H^0(\n_\l,\gr ((Q_\l \otimes_{W_\l} M) \otimes
V)) &\iso   H^0(\n_\l, \C[N_\l] \otimes \gr M \otimes V) \\
& \iso \gr M \otimes (\C[N_\l] \otimes V)^{N_\l}.
\end{align*}
It is a standard result that $(\C[N_\l] \otimes V)^{N_\l} \iso V$,
see for example \cite[I.3.7(6)]{Ja}; in fact the map $\C[N_\l] \to
\C$ given by evaluation at $1$ gives a map $\C[N_\l] \otimes V \to
V$, which restricts to the above isomorphism.  In turn this gives an
isomorphism
\begin{equation} \label{e:epsilon}
\epsilon : H^0(\n_\l,\gr((Q_\l \otimes_{W_\l} M) \otimes V)) \iso
\C[N_\l] \otimes \gr M \otimes V \isoto \gr M \otimes V.
\end{equation}

Next we want to show that the natural map
\begin{equation} \label{e:gr}
\gr H^0(\n_\l, (Q_\l \otimes_{W_\l} M) \otimes V) \to H^0(\n_\l, \gr
((Q_\l \otimes_{W_\l} M) \otimes V))
\end{equation}
is an isomorphism.  To do this we use the standard spectral sequence
for calculating the cohomology of the filtered module $(Q_\l
\otimes_{W_\l} M) \otimes V$, which we denote by $(E_r)$, i.e.\ the
standard complex for calculating the $\n_\l$-cohomology of $(Q_\l
\otimes_{W_\l} M) \otimes V$ is filtered, and one takes $(E_r)$ to
be the corresponding spectral sequence. It is a standard result that
$H^i(\n_\l, \C[N_\l] \otimes V) = 0$ for $i > 0$: this can be proved
by choosing a filtration $F_j'V$ of $V$ as an $\n_\l$-module, such
that the action of $\n_\l$ on the associated graded module $\gr' V$
is trivial; then one can apply a spectral sequence argument along
with the fact that $H^i(\n_\l,\C[N_\l]) = 0$ for $i > 0$, as this is
de Rham cohomology of the affine space $N_\l$, see for example
\cite[\S 4.3]{GG}. It follows that $H^i(\n_\l, \gr ((Q_\l
\otimes_{W_\l} M) \otimes V)) = 0$ for all $i
> 0$.  This implies that $E_1$ is concentrated in degree $0$, so the
spectral sequence $(E_r)$ stabilizes at $r = 2$, namely $E_2 =
E_\infty$. Therefore, the map in \eqref{e:gr} is an isomorphism, and
in turn we obtain an isomorphism
\begin{equation} \label{e:barepsilon}
\bar \epsilon : \gr (M \ostar_\l V) \isoto \gr M \otimes V.
\end{equation}
from $\epsilon$ in \eqref{e:gr}.

We are left to show that $\bar \epsilon$ is an isomorphism of $\gr
W_\l$-modules.  First we consider $V$ as a filtered $U(\g)$-module
for the Kazhdan filtration.  It is clear that the action of $\gr
U(\g) \iso S(\g)$ on the associated graded module $\gr V \iso V$ is
trivial.  Therefore the action of $\gr U(\g)$ on $\gr ((Q_\l
\otimes_{W_\l} M) \otimes V))$ is given by
$$
a(u \otimes m \otimes v) = au \otimes m \otimes v
$$
for $a \in \gr U(\g)$, $u \in \gr \tilde Q$, $m \in \gr M$ and $v
\in V$.  Using the fact that $\gr U(\g)$ and $\gr W_\l$ are
commutative. we see that the action of $\gr W_\l$ on $H^0(\n_\l,(\gr
Q_\l \otimes_{\gr W_\l} \gr M) \otimes V)$ is determined by
restricting the formula
$$
a(u \otimes \m \otimes v) = u  \otimes am \otimes v,
$$
for $a \in \gr W_\l$, $u \in \gr \tilde Q$, $m \in \gr M$ and $v \in
V$. From these expressions it is straightforward to see that $\bar
\epsilon$ does indeed restrict to the required isomorphism of $\gr
W_\l$-modules.
\end{proof}

We now translate Theorem~\ref{T:iso} in to setting of translations
of $U(\g,e)$-modules through Lemma~\ref{L:transequiv}. The
discussion below is analogous to that in \cite[\S 3.2]{BGK}.

First we explain the commutative diagram below, which regards the
translation $M \ostar V$ for the case where $M = U(\g,e)$ is the
regular module.
\begin{equation} \label{e:ickdiag}
\xymatrix{
(S(\tilde \p) \otimes V)^\n \ar@{^{(}->}[r] \ar[dr]_-{\sim} &
S(\tilde\p) \otimes V \ar[r]^-{\sim} \ar@{>>}[d]^{\zeta_V} &
\C[e+\m^\perp] \otimes V \ar@{>>}[d]^{\res} &
(\C[e+\m^\perp] \otimes V)^N \ar@{_{(}->}[l] \ar[dl]^-{\sim} \\
& S(\g^e) \otimes V \ar[r]^-{\sim} & \C[\cS] \otimes V. \\ }
\end{equation}
We identify the $U(\tilde \g)$-module $(\tilde Q \otimes_{U(\g,e)}
U(\g,e)) \otimes V$ with $U(\tilde \p) \otimes V$, and $U(\g,e)
\ostar V$ with the subspace of $\n$-invariants for the dot action.
We have a Kazhdan filtration on $U(\tilde \p) \otimes V$, where as
before $V$ is graded, and so filtered, by the $c$-eigenspace
decomposition. The associated graded module is isomorphic to
$S(\tilde \p) \otimes V$. The dot action from \eqref{e:dot} is
filtered and so gives an action of $\n$ on $S(\tilde \p) \otimes V$.
The map of the left of the diagram is the inclusion of the
invariants for this action $(S(\tilde \p) \otimes V)^\n =
H^0(\n,S(\tilde \p) \otimes V)$.

From Lemma~\ref{L:equiv}, we obtain an isomorphism $U(\tilde \p)
\otimes V \isoto Q \otimes V$, which gives the isomorphism
$S(\tilde\p) \otimes V \isoto \C[e+\m^\perp] \otimes V$ in the
diagram through the identification $\gr Q \iso \C[e+\m^\perp]$. This
isomorphism can also be described as follows: we can view
$\C[e+\m^\perp] \otimes V$ as the space of regular functions from $e
+ \m^\perp$ to $V$; then $x \otimes v \in \p \otimes V$ is sent to
the function $z \mapsto (x|z)v$ and $y^\ne \otimes v \in \k^\ne
\otimes V$ to the function $z \mapsto (y|z)v$.

The inclusion of the $N$-invariants (equivalently $\n$-invariants)
in $\C[e+\m^\perp] \otimes V$ is on the right of the diagram.  From
the proof of Theorem~\ref{T:iso}, it follows that there is an
isomorphism $\gr H^0(\n, Q \otimes V) \iso (\C[e+\m^\perp] \otimes
V)^N$. Thus through Lemma~\ref{L:transequiv} we obtain an
isomorphism $\gr (U(\g,e) \ostar V) \iso (S(\tilde \p) \otimes
V)^\n$.

We now consider the vertical maps.  The map $\zeta_V = \zeta \otimes
\id_V$ involves the map $\zeta : S(\tilde \p) \to S(\g^e)$ from
\eqref{e:zeta}. The other vertical map is the restriction map. This
square commutes, because $(x|z) = 0$ for any $x \in \r$
and $z \in e + \g^f$, see the discussion in \cite[\S 3.2]{BGK}.

The diagonal map on the right is an isomorphism, because the
restriction map identifies with the map $\epsilon : \C[N] \otimes
\C[\cS] \otimes V \to \C[\cS] \otimes V$ from \eqref{e:epsilon}.
Hence, also the diagonal map on the left is an isomorphism.

The upshot of this commutative diagram is that the restriction of
$\zeta \otimes \id_V$ gives rise to an isomorphism $\eta_V$ recorded
in the following proposition.  This isomorphism is obtained as the
composition of the diagonal isomorphism in \eqref{e:ickdiag} with
$\zeta^{-1} \otimes \id_V$, where $\zeta^{-1}$ means the inverse of
$\zeta : S(\p)^\n \to S(\g^e)$. The formula given in the proposition
is a consequence of Lemma~\ref{L:zetaeta}, in the statement we use
$\eta$ from \eqref{e:eta}.

\begin{Proposition} \label{P:etaV}
The map $\eta_V : S(\tilde \p) \otimes V \to S(\tilde \p)^\n \otimes
V$, defined by $\eta_V(u \otimes v) = \eta(u) \otimes v$ for $u \in
S(\tilde \p)$ and $v \in V$, restricts to an isomorphism of vector
spaces
\begin{equation} \label{e:etaV}
\eta_V : \gr (U(\g,e) \ostar V) \isoto \gr U(\g,e) \otimes V.
\end{equation}
\end{Proposition}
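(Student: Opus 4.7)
My plan is to unwind the identifications set up in the commutative diagram \eqref{e:ickdiag} and then invoke Lemma~\ref{L:zetaeta} to factor $\eta_V$ through two isomorphisms with common target $S(\g^e) \otimes V$.

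First I would translate both sides of the claimed isomorphism into spaces of $\n$-invariants. The discussion preceding the proposition (together with Lemma~\ref{L:transequiv} and the proof of Theorem~\ref{T:iso}) identifies $\gr(U(\g,e) \ostar V)$ with $(S(\tilde \p) \otimes V)^\n$ via the left-hand diagonal in \eqref{e:ickdiag}. On the other side, \eqref{e:filt} together with Lemma~\ref{L:equiv} yield $\gr U(\g,e) \iso S(\tilde \p)^\n$, so $\gr U(\g,e) \otimes V \iso S(\tilde \p)^\n \otimes V$. Since $\eta$ lands in $S(\tilde \p)^\n$ by construction, $\eta_V$ automatically carries $(S(\tilde \p) \otimes V)^\n$ into $S(\tilde \p)^\n \otimes V$, and the task reduces to showing that this restricted map is bijective.

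Next I would exploit Lemma~\ref{L:zetaeta}, which gives $\zeta \circ \eta = \zeta$ on $S(\tilde \p)$; tensoring with $\id_V$ produces the identity $\zeta_V = (\zeta \otimes \id_V) \circ \eta_V$ on all of $S(\tilde \p) \otimes V$. Now \eqref{e:ickdiag} already records that $\zeta_V$ restricts to an isomorphism $(S(\tilde \p) \otimes V)^\n \isoto S(\g^e) \otimes V$, and specializing the same diagram to $V = \C$ (equivalently, combining \eqref{e:zeta} with the identification $\gr U(\g,e) \iso S(\tilde \p)^\n$) shows that $\zeta$ restricts to an isomorphism $S(\tilde \p)^\n \isoto S(\g^e)$, whence $\zeta \otimes \id_V$ restricts to an isomorphism $S(\tilde \p)^\n \otimes V \isoto S(\g^e) \otimes V$. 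Consequently, the restriction of $\eta_V$ to $(S(\tilde \p) \otimes V)^\n$ equals the composition of $\zeta_V$ with the inverse of $\zeta \otimes \id_V$, and is therefore an isomorphism onto $S(\tilde \p)^\n \otimes V$.

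I do not foresee a serious obstacle: the substantive content, namely that taking $\n$-invariants commutes with passing to the associated graded, has already been established in Theorem~\ref{T:iso} via a spectral sequence argument, and Lemma~\ref{L:zetaeta} is precisely the identity needed to pin down the factorization as $\eta_V$. What remains is essentially linear-algebraic bookkeeping. The only point requiring mild care is the bijectivity of $\zeta$ on $S(\tilde \p)^\n$, which however follows at once from the $V = \C$ instance of the diagram.
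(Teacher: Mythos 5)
Your proposal is correct and follows the same route as the paper: the paper likewise obtains the isomorphism as the composite of the left diagonal in \eqref{e:ickdiag} (the restriction of $\zeta_V$ to $\n$-invariants) with $\zeta^{-1} \otimes \id_V$, and then invokes Lemma~\ref{L:zetaeta} to identify this composite with the formula $\eta_V(u \otimes v) = \eta(u) \otimes v$. You simply run the same argument in reverse, starting from the formula and verifying the factorization, which is a presentational difference only.
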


We now consider the situation for any $U(\g,e)$-module $M$.  There
is an analogue of the commutative diagram \eqref{e:ickdiag}, where
the triangle on the left is
\begin{equation} \label{e:ickdiag2}
\xymatrix{ ((S(\tilde \p) \otimes_{S(\tilde \p)^\n} \gr M ) \otimes
V)^\n \ar@{^{(}->}[r] \ar[dr]_-{\sim} & (S(\tilde \p)
\otimes_{S(\tilde \p)^\n} \gr M ) \otimes V
\ar@{>>}[d]^{\zeta_{M,V}} &
\\
& (S(\g^e) \otimes_{S(\g^e)} \gr M ) \otimes V. \\ }
\end{equation}
Identifying $\gr \tilde Q$ with $S(\tilde \p)$ we have
$$
\gr (M \ostar V) \iso ((S(\tilde \p) \otimes_{S(\tilde \p)^\n} \gr
M) \otimes V)^\n
$$
is the module in the top left of the diagram.  The horizontal map is
simply the inclusion of invariants.  We have that $\gr M$ is a
module for $\gr U(\g,e) \iso S(\p)^\n$, and we may also view $\gr M$
as an $S(\g^e)$-module through $\zeta$ from \eqref{e:zeta}.  Thus,
we can make sense of the module at the bottom of the diagram. The
vertical map $\zeta_{M,V}$ is defined by
$$
\zeta_{M,V}(u \otimes m \otimes v) = \zeta(u) \otimes m \otimes v.
$$
for $u \in S(\tilde \p)$, $M \in \gr M$ and $v \in V$.  Analogous
arguments to the case $M = U(\g,e)$ show that the diagonal map is an
isomorphism.

There is the obvious isomorphism
\begin{equation} \label{e:obvious}
(S(\g^e) \otimes_{S(\g^e)} \gr M ) \otimes V \isoto \gr M \otimes V.
\end{equation}
Thanks to Lemma~\ref{L:zetaeta}, this is the same as the isomorphism
given by $u \otimes m \otimes v \mapsto \zeta^{-1}(u)m \otimes v$
for $u \in S(\g^e)$, $M \in \gr M$ and $v \in V$, where on the
right-hand side we are viewing $\gr M$ as a $S(\tilde
\p)^\n$-module. In turn, this means that the composition of the
diagonal isomorphism in \eqref{e:ickdiag2} with the isomorphism in
\eqref{e:obvious} is given by restricting the map
$$
u \otimes m \otimes v \mapsto \eta(u)m \otimes v
$$
to $\gr(M \ostar V)$, for $u \in S(\tilde \p)$, $m \in \gr M$ and $v
\in V$, where $\eta$ is defined in \eqref{e:eta}.

The above discussion is summarized in the following theorem.
\begin{Theorem} \label{T:etaMV}
The map  $\eta_{M,V} : (S(\tilde \p) \otimes_{S(\tilde \p)^\n} \gr
M) \otimes V \to \gr M \otimes V$, defined by
$\eta_{M,V}(u \otimes m \otimes v) = \eta(u)m \otimes v$
for $u \in S(\tilde \p)$, $m \in \gr M$ and $v \in V$, restricts to
an isomorphism
\begin{equation} \label{e:etaMV}
\eta_{M,V} : \gr(M \ostar V) \isoto M \otimes V.
\end{equation}
\end{Theorem}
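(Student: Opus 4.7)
The plan is to parallel the derivation of Proposition~\ref{P:etaV} (the case $M = U(\g,e)$) with an arbitrary $U(\g,e)$-module $M$, using the commutative diagram \eqref{e:ickdiag2} as a roadmap. First I would check that $(\tilde Q \otimes_{U(\g,e)} M) \otimes V$ carries a well-defined Kazhdan filtration, using Lemma~\ref{L:free} to identify the associated graded module with $(S(\tilde \p) \otimes_{S(\tilde \p)^\n} \gr M) \otimes V$: freeness of $\tilde Q$ over $U(\g,e)$ is precisely what ensures that taking associated graded commutes with the tensor product over $U(\g,e)$. Since the dot action of $\n$ from \eqref{e:dotnlla} is filtered, there is a natural map from $\gr(M \ostar V)$ into $((S(\tilde \p) \otimes_{S(\tilde \p)^\n} \gr M) \otimes V)^\n$, the top-left vertex of the (extended) diagram.

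The central step is to prove both that this natural map is an isomorphism and that the diagonal map in \eqref{e:ickdiag2} is an isomorphism. For this I would rerun the spectral sequence argument from the proof of Theorem~\ref{T:iso}: under the identification $\gr \tilde Q \iso \C[N] \otimes \C[\cS]$ and $\gr U(\g,e) \iso \C[\cS]$, the associated graded becomes $\C[N] \otimes \gr M \otimes V$, with $\n$ acting only via left-translation on $\C[N]$ composed with the exponentiated action on $V$. Since $H^i(\n,\C[N]) = 0$ for $i > 0$ and $V$ admits an $\n$-stable filtration on which the associated graded action is trivial, a small spectral sequence gives $H^i(\n, \C[N] \otimes \gr M \otimes V) = 0$ for all $i > 0$. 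Hence the filtration spectral sequence for $(\tilde Q \otimes_{U(\g,e)} M) \otimes V$ collapses at $E_2$, yielding both the isomorphism with $\gr(M \ostar V)$ and (by evaluation at $1 \in N$) the diagonal isomorphism to $(S(\g^e) \otimes_{S(\g^e)} \gr M) \otimes V$.

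Finally, I would identify the composition of this diagonal isomorphism with the inverse of the obvious isomorphism \eqref{e:obvious}, and verify that it agrees with the restriction of the map $u \otimes m \otimes v \mapsto \eta(u) m \otimes v$. The key input here is Lemma~\ref{L:zetaeta}, which says $\zeta = \zeta \circ \eta$ on $S(\tilde \p)$: this ensures that viewing $\gr M$ either as an $S(\g^e)$-module through $\zeta$ or as an $S(\tilde \p)^\n$-module through $\zeta^{-1}$ gives the same $S(\tilde \p)$-action modulo $\eta$, so the explicit formula for $\eta_{M,V}$ is forced.

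The main obstacle I expect is bookkeeping rather than a deep new idea: one must check that the tensor product $(\tilde Q \otimes_{U(\g,e)} M) \otimes V$ really does commute with $\gr$ for general (non-free) $M$, and that the $N$-invariants calculation $(\C[N] \otimes W)^N \iso W$ extends to arbitrary coefficients $W = \gr M \otimes V$. Both reduce to Lemma~\ref{L:free} and the vanishing of higher cohomology established in the spectral sequence step, so no additional input beyond what is already assembled in \S\ref{ss:structure} and the proof of Theorem~\ref{T:iso} should be required.
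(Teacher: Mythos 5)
Your proposal is correct and follows essentially the same route as the paper: set up the analogue \eqref{e:ickdiag2} of the diagram \eqref{e:ickdiag}, use the Kazhdan filtration and the spectral sequence argument of Theorem~\ref{T:iso} (already carried out there for arbitrary $M$) to identify $\gr(M \ostar V)$ with the $\n$-invariants of $(S(\tilde\p) \otimes_{S(\tilde\p)^\n} \gr M) \otimes V$ and to show the diagonal map is an isomorphism, and then invoke Lemma~\ref{L:zetaeta} together with the obvious isomorphism \eqref{e:obvious} to obtain the explicit formula $u \otimes m \otimes v \mapsto \eta(u)m \otimes v$. You make explicit the use of Lemma~\ref{L:free} (freeness of $\tilde Q$ over $U(\g,e)$) to justify that $\gr$ commutes with $\otimes_{U(\g,e)}$, which the paper leaves implicit inside the proof of Theorem~\ref{T:iso}; otherwise the arguments coincide.
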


\subsection{Explicit isomorphisms and lift matrices} \label{ss:lift}

Throughout this subsection, $M$ is a $U(\g,e)$-module and $V$ is a
finite dimensional $U(\g)$-module. From Theorem~\ref{T:etaMV}, it
follows that there is an isomorphism of vector spaces $M \ostar V
\iso M \otimes V$.  However, there is not in general a canonical
isomorphism, in this subsection we discuss explicit isomorphisms and
describe all isomorphisms satisfying a certain technical condition
explained in Remark~\ref{R:allisos}. Our approach is based on
\cite[Thm.\ 8.1]{BK2}, which in turn is attributed as a
reformulation of \cite[Thm.\ 4.2]{Ly}. First we need to introduce
some notation.

We choose an ordered basis $\bv = (v_1,\dots,v_n)$ of $V$,
consisting of $\t$-weight vectors.  We let $c_i$ be the eigenvalue
of $c$ on $v_i$, and assume that $\bv$ is ordered so that $c_1 \ge
\dots \ge c_n$. The coefficient functions $b_{ij} \in U(\g)^*$ of
$V$ are defined by $uv_j = \sum_{i=1}^n b_{ij}(u) v_i$.  We say that
an $n \times n$-matrix $\bz = (z_{ij})$ is {\em $V$-block lower
unitriangular} if $z_{ij} = \delta_{ij}$ whenever $c_i \ge c_j$.

Let $(x_1^{\ne}),\dots,(x_{2s}^{\ne}), x_{2s+1},\dots,x_r$ be the
basis of $\r = \bigoplus_{j \ge 2} [f,\g(j)] \oplus \k^\ne$
introduced after Lemma~\ref{L:free}. The right $U(\g,e)$-module
homomorphism $\chi : \tilde Q \to U(\g,e)$ is defined in
\eqref{e:chi}. We define $\chi_{M,V} :(\tilde Q \otimes_{U(\g,e)} M)
\otimes V \to M \otimes V$ by
\begin{equation} \label{e:chiMV}
\chi_{M,V}(u\tilde \1 \otimes m \otimes v) = \chi(u\tilde \1)m
\otimes v.
\end{equation}
The following theorem gives the desired vector space isomorphism
between $M \ostar V$ and $M \otimes V$.

\begin{Theorem} \label{T:projiso}
The restriction of $\chi_{M,V}$ to $M \ostar V$ is an isomorphism of
vector spaces
$$
\chi_{M,V} : M \ostar V \to M \otimes V.
$$
Moreover, $\chi$ is natural in both $M$ and $V$.
\end{Theorem}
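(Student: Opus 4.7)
The plan is to reduce the statement to the associated graded computation already accomplished in Theorem~\ref{T:etaMV}, and then lift using a standard filtration argument. First I would verify that $\chi_{M,V}$ is well-defined as a map out of the balanced tensor product: by the description of the right $U(\g,e)$-module basis of $\tilde Q$ in Lemma~\ref{L:free}(ii), the projection $\chi : \tilde Q \to U(\g,e)$ is a homomorphism of right $U(\g,e)$-modules, so the formula $u\tilde \1 \otimes m \otimes v \mapsto \chi(u\tilde \1) m \otimes v$ indeed descends to $(\tilde Q \otimes_{U(\g,e)} M) \otimes V$.

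Next I would equip everything with the Kazhdan filtration. Choose a finite-dimensional generating subspace $M_0 \subseteq M$ placed in degree $0$ and filter $M$ by $F_j M = (F_j U(\g,e))\, M_0$ as in \S\ref{ss:kazh}; filter $V$ by the $c$-eigenspace grading; this induces a Kazhdan filtration on $(\tilde Q \otimes_{U(\g,e)} M) \otimes V$. The map $\chi$ is filtered by construction, and by the very definition of $\eta$ in \eqref{e:eta} we have $\gr \chi = \eta$. Consequently $\chi_{M,V}$ is filtered and its associated graded map is exactly the map
$$
\eta_{M,V} : (S(\tilde \p) \otimes_{S(\tilde \p)^\n} \gr M) \otimes V \to \gr M \otimes V
$$
appearing in Theorem~\ref{T:etaMV}.

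Restricting to $\n$-invariants under the dot action, Theorem~\ref{T:etaMV} identifies $\gr(\chi_{M,V}|_{M\ostar V})$ with the isomorphism $\eta_{M,V} : \gr(M \ostar V) \isoto \gr M \otimes V$. All Kazhdan filtrations in sight are bounded below (the filtrations on $\tilde Q$ and on $U(\g,e)$ are non-negative, $M$ is filtered non-negatively from $M_0$, and $V$ is finite-dimensional hence bounded), so the standard filtered-module argument promotes this to the desired isomorphism $\chi_{M,V}|_{M \ostar V} : M \ostar V \isoto M \otimes V$. Naturality in $M$ and in $V$ is visible from the formula \eqref{e:chiMV}, since $\chi$ itself depends only on the ambient choice of basis of $\r$ and not on $M$ or $V$; thus any morphism $M \to M'$ of $U(\g,e)$-modules or $V \to V'$ of finite-dimensional $U(\g)$-modules yields a commutative square.

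The main technical obstacle I anticipate is checking that the two filtrations on $\gr(M \ostar V)$ agree---namely the filtration coming from viewing $M \ostar V$ as a subspace of $(\tilde Q \otimes_{U(\g,e)} M) \otimes V$ and then taking associated graded, versus the invariants $\bigl((S(\tilde \p) \otimes_{S(\tilde \p)^\n} \gr M) \otimes V\bigr)^\n$ that appear in Theorem~\ref{T:etaMV}. This is precisely the spectral sequence collapse argument used inside the proof of Theorem~\ref{T:iso} (and transported through Lemma~\ref{L:transequiv}), so I would invoke it directly rather than redo it; once this identification is in hand, the rest of the proof is a formal filtration-lifting step.
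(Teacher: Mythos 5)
Your proposal is correct and follows essentially the same route as the paper: identify $\gr\chi_{M,V}$ with $\eta_{M,V}$, invoke Theorem~\ref{T:etaMV}, and lift via a standard filtration argument (the paper does the special case $M = U(\g,e)$ via Proposition~\ref{P:etaV} first, then says the general case is similar). You are also right that the step deserving scrutiny is the identification of $\gr(M\ostar V)$ with the $\n$-invariants of the associated graded module, which the paper handles by the spectral sequence collapse inside Theorem~\ref{T:iso}; flagging this explicitly is a useful addition rather than a deviation.
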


\begin{proof}
First we consider the case $M = U(\g,e)$ is the regular module.  In
this case we identify $(\tilde Q \otimes _{U(\g,e)} U(\g,e)) \otimes
V$ with $\tilde Q \otimes V$, and write $\chi_V = \chi_{M,V}$. Under
this identification we have $\chi_V(u \tilde \1 \otimes v) = \chi(u
\tilde \1) \otimes v$ for $u \in U(\tilde \p)$ and $v \in V$.  Thus
the associated graded map $\gr \chi_V : S(\tilde \p) \otimes V \to
\gr (U(\g,e) \otimes V)$ is the same as $\eta_V$ from
\eqref{e:etaV}. Thus by Proposition~\ref{P:etaV}, we have that $\gr
\chi_V$ is an isomorphism.  A standard filtration argument now tells
us that $\chi_V$ is an isomorphism.

The case for general $M$ is similar: we follow the same arguments
identifying $\gr \chi_{M,V}$ with $\eta_{M,V}$ from \eqref{e:etaMV}
then appealing to Theorem~\ref{T:etaMV}.

It is clear that $\chi$ is natural in both $M$ and $V$.
\end{proof}

The following lemma is similar to parts \cite[Thm.\ 8.1]{BK2},
though the proof here is different. We recall that $\Pr : U(\tilde
\g) \to U(\tilde \p)$ is the projection along the direct sum
decomposition $U(\tilde \g) = U(\tilde \p) \oplus \tilde I$.

\begin{Lemma} \label{L:inverse}
Let $\bx^0 = (x^0_{ij})$ be an $n \times n$ matrix with entries in
$U(\tilde \p)$ satisfying the conditions:
\begin{enumerate}
\item[(i)] $\chi(x^0_{ij}) = \delta_{ij}$, and
\item[(ii)]
$\Pr([x-x^{\ne},x^0_{ij}]) + \sum_{k=1}^n b_{ik}(x) x^0_{kj} \in
\tilde I$, for all $x \in \n$.
\end{enumerate}
Then the inverse to $\chi_{M,V} : M \ostar V \to M \otimes V$ is
given by the map $\psi_{\bx^0,M,\bv} : M \otimes V \to M \ostar V$
defined by $\psi_{\bx^0,M,\bv}(m \otimes v_j) = \sum_{i=1} ^n
x^0_{ij} \otimes m \tilde \1 \otimes v_i$, for $m \in M$.

Moreover, $\bx^0$ is uniquely determined by conditions (i) and (ii)
and is $V$-block lower unitriangular.
\end{Lemma}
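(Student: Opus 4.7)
My plan is to verify directly from the definitions that $\psi := \psi_{\bx^0,M,\bv}$ is a well-defined map into $M \ostar V$ satisfying $\chi_{M,V} \circ \psi = \id$, then appeal to Theorem~\ref{T:projiso} for the converse direction, and finally extract uniqueness and the block lower unitriangularity from a Kazhdan-filtration analysis in the free case $M = U(\g,e)$.

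For the first step, I apply an arbitrary $x \in \n$ in the dot action \eqref{e:dotnlla} to $\psi(m \otimes v_j) = \sum_i x^0_{ij}\tilde\1 \otimes m \otimes v_i$. Writing $xv_i = \sum_k b_{ki}(x) v_k$ and reindexing, the two contributions combine to
\[
x \cdot \psi(m \otimes v_j) = \sum_i \Bigl(\Pr([x-x^{\ne}, x^0_{ij}]) + \sum_k b_{ik}(x)\, x^0_{kj}\Bigr)\tilde\1 \otimes m \otimes v_i,
\]
where I have replaced $[x-x^\ne, x^0_{ij}]$ by $\Pr([x-x^\ne, x^0_{ij}])$ since elements of $\tilde I$ annihilate $\tilde \1$. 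By condition (ii) each coefficient lies in $\tilde I$, so the whole expression is zero, i.e.\ $\psi(m \otimes v_j) \in M \ostar V$. Condition (i) then immediately gives $\chi_{M,V}(\psi(m \otimes v_j)) = \sum_i \chi(x^0_{ij}\tilde\1)\, m \otimes v_i = m \otimes v_j$, so $\psi$ is a right inverse to $\chi_{M,V}$. Since Theorem~\ref{T:projiso} already asserts that $\chi_{M,V}$ is a bijection, $\psi$ is automatically a two-sided inverse.

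For uniqueness and triangularity I specialize to $M = U(\g,e)$, identifying $(\tilde Q \otimes_{U(\g,e)} U(\g,e)) \otimes V$ with $\tilde Q \otimes V$. Conditions (i) and (ii) say exactly that $\sum_i x^0_{ij}\tilde\1 \otimes v_i = \chi_V^{-1}(1 \otimes v_j)$, which uniquely determines each $x^0_{ij} \in U(\tilde \p)$ using $\tilde Q \iso U(\tilde\p)$ from Lemma~\ref{L:equiv} together with the fact that the $v_i$ form a basis of $V$. For the triangularity, I exploit that $\chi_V$ is filtered for the Kazhdan filtration with $\gr \chi_V = \eta_V$ (as established in the proof of Theorem~\ref{T:projiso}), so $\chi_V^{-1}$ also respects the filtration. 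The element $1 \otimes v_j$ sits in Kazhdan degree $c_j$, so each summand $x^0_{ij}\tilde\1 \otimes v_i$ must satisfy $\deg(x^0_{ij}) + c_i \le c_j$. When $c_i > c_j$ this is impossible in a non-negative filtration, forcing $x^0_{ij} = 0$; when $c_i = c_j$, $x^0_{ij}$ must be a scalar, which condition (i) pins down to $\delta_{ij}$.

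The main subtlety I anticipate is the filtration step: I need to confirm, by unwinding the proof of Theorem~\ref{T:projiso}, that $\chi_V$ is strictly filtered with isomorphic associated graded $\eta_V$, so that $\chi_V^{-1}$ genuinely preserves Kazhdan degree term by term. Once that is in hand, the rest is a direct unwinding of conditions (i) and (ii).
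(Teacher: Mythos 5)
Your proof is correct, and for the main assertion it follows the same path as the paper: use condition (ii) to check that $\psi_{\bx^0,M,\bv}$ lands in the $\n$-invariants, use condition (i) to get $\chi_{M,V}\circ\psi_{\bx^0,M,\bv}=\id$, invoke Theorem~\ref{T:projiso} to upgrade this to a two-sided inverse, and reduce uniqueness to the free case $M=U(\g,e)$ where $\tilde Q\iso U(\tilde\p)$ pins down each $x^0_{ij}$. Where you depart from the paper is in establishing $V$-block lower unitriangularity. The paper's in-proof argument is the terse observation that replacing $x^0_{ij}$ by $\delta_{ij}$ whenever $c_i\ge c_j$ would preserve conditions (i) and (ii), so that uniqueness forces the replacement to be vacuous; this requires unpacking (in particular, checking that condition (ii) survives the substitution is not immediate and is left to the reader). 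You instead use the Kazhdan filtration: since $\chi_V$ is filtered with $\gr\chi_V=\eta_V$ an isomorphism, the inverse $\chi_V^{-1}$ is also filtered, so $x^0_{ij}\in F_{c_j-c_i}U(\tilde\p)$; the non-negativity of the filtration then kills the entries with $c_i>c_j$, and the entries with $c_i=c_j$ are scalars which condition (i) fixes to $\delta_{ij}$. This is exactly the argument the paper itself records immediately after Definition~\ref{D:lift} to obtain the degree bound making $\bx^0$ a lift matrix, so you are essentially moving that observation earlier and using it to absorb the triangularity step. Your route is arguably more transparent and simultaneously delivers the stronger conclusion $x^0_{ij}\in F_{c_j-c_i}U(\tilde\p)$, which the lemma does not state but the paper needs shortly afterward.
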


\begin{proof}
We first consider the case $M = U(\g,e)$ and identify $(\tilde Q
\otimes_{U(\g,e)} U(\g,e)) \otimes V$ with $\tilde Q \otimes V$. We
write $\psi_{\bx^0,\bv}$ for $\psi_{\bx^0,M,\bv}$ in this case.

It is clear that the inverse of $\chi_V$ must have the form
$\psi_{\bx^0,\bv}$ for some matrix $\bx^0$ with entries in $U(\tilde
\p)$.  In particular, $\psi_{\bx^0,V}(1 \otimes v_j) = \sum_{i=1}^N
x^0_{ij} \tilde \1 \otimes v_i$.  This forces $\chi(x_{ij}) =
\delta_{ij}$ giving (i). Also $\sum_{i=1}^N x^0_{ij}\tilde \1
\otimes v_i$ must lie in $M \ostar V = H^0(\n,(\tilde Q
\otimes_{U(\g,e)} U(\g,e)) \otimes V)$, which forces (ii) to hold.

Conversely, one can check that conditions (i) and (ii) imply that
$\psi_{\bx^0,\bv}$ is inverse to $\chi_V$, so that they uniquely
determine $\bx^0$.  To see that $\bx^0$ is $V$-block lower
unitriangular, one observes that conditions (i) and (ii) would still
hold if we replaced $x_{ij}^0$ with $\delta_{ij}$ when $c_i \ge
c_j$.

For general $M$, it is immediate from (i) and (ii) that $\chi_{M,V}
\psi_{\bx^0,M,\bv} = \id_{M \otimes V}$.  Therefore,
$\psi_{\bx^0,M,\bv}$ is inverse to $\chi_{M,V}$, and the proof is
complete.
\end{proof}

The matrix $\bx^0$ in Lemma~\ref{L:inverse} leads us to the
following definition of lift matrices, which is key to a number of
results in the sequel.

\begin{Definition} \label{D:lift}
Let $\bx = (x_{ij})$ be a $V$-block lower unitriangular matrix with
entries in $U(\tilde \p)$ satisfying:
\begin{equation} \label{e:lift}
\Pr([x-x^{\ne},x_{ij}]) + \sum_{k=1}^n b_{ik}(x) x_{kj} = 0
\end{equation}
for all $x \in \n$, and $x_{ij} \in F_{c_j - c_i} U(\tilde \p)$.
Then we call $\bx$ a {\em lift matrix} for the basis $\bv$ of $V$.
\end{Definition}

We note in particular that $\bx^0$ is a lift matrix for $\bv$, the
condition on the filtered degree of entries to $\bx^0$ holds because
$\chi_{M,V}$ is clearly a filtered map. Our next proposition shows
that all lift matrices give rise to a vector space isomorphism $M
\otimes V \iso M \ostar V$. It is proved by adapting arguments from
the proof of \cite[Thm.\ 8.1]{BK2}.

\begin{Proposition} \label{P:lift}
Let $\bx^0$ be as in Lemma~\ref{L:inverse}.
\begin{enumerate}
\item[(a)]
Let $\bx$ be a lift matrix for $\bv$.
\begin{enumerate}
\item[(i)]
The map $\psi_{\bx,M,\bv} =  : M \otimes V \to M \ostar V$ defined
by $\psi_{\bx,M,\bv}(m \otimes v_j) = \sum_{i=1}^n x_{ij} \tilde \1
\otimes m \otimes v_i$ is a vector space isomorphism.
\item[(ii)]
We have $\bx = \bx^0 \bw^0$, where $\bw^0$ is a $V$-block lower
unitriangular matrix with entries in $U(\g,e)$
\end{enumerate}
\item[(b)]  Suppose $\bx$ is of the form  $\bx = \bx^0 \bw^0$, where
$\bw^0 = (w^0_{ij})$ is a $V$-block lower unitriangular matrix with
$w^0_{ij} \in F_{c_j -c_i} U(\g,e)$.  Then $\bx$ is a lift matrix
for $\bv$.
\item[(c)] Let $\bx$ be a lift matrix and $\by$ an $\n times n$ matrix with entries in $U(|tilde \p)$.
Then $\by$ is a lift matrix if and only if there is a $V$-block
lower unitriangular matrix $\bw= (w_{ij})$ with $w_{ij} \in F_{c_j
-c_i} U(\g,e)$, such that $\bx = \by \bw$.
\end{enumerate}
\end{Proposition}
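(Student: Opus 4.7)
The plan is to prove (a)(i) by a direct invariance calculation combined with an inversion argument, then deduce (a)(ii) and (b) by comparing $\bx$ with $\bx^0$, and finally obtain (c) as a formal consequence.

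For (a)(i), I would first verify that $\psi_{\bx,M,\bv}$ lands in $M \ostar V$. Applying the dot action \eqref{e:dotnlla} to $\sum_i x_{ij}\tilde\1\otimes m\otimes v_i$ produces a bracket term together with a $V$-action term $\sum_{i,k} b_{ik}(x) x_{kj}\tilde\1 \otimes m \otimes v_i$; since $[x-x^\ne,x_{ij}]\tilde\1 = \Pr([x-x^\ne,x_{ij}])\tilde\1$ in $\tilde Q$, the defining relation \eqref{e:lift} of a lift matrix makes the two contributions cancel for every $x \in \n$. To show that $\psi_{\bx,M,\bv}$ is bijective, I would compose with $\chi_{M,V}$ and read off from \eqref{e:chiMV} that the composition is the endomorphism $m\otimes v_j \mapsto \sum_i \chi(x_{ij}\tilde\1)\, m\otimes v_i$ of $M\otimes V$, i.e.\ left multiplication by the matrix $\bw^0 := (\chi(x_{ij}\tilde\1))$ with entries in $U(\g,e)$. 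Since $\bx$ is $V$-block lower unitriangular, we have $w^0_{ij}=\delta_{ij}$ whenever $c_i\geq c_j$, so $\bw^0$ acts on $M\otimes V$ as the identity plus an operator strictly decreasing the $c$-weight in the $V$-factor, and is therefore invertible; together with Theorem~\ref{T:projiso} this gives that $\psi_{\bx,M,\bv}$ is a vector space isomorphism.

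For (a)(ii), I would take $M = U(\g,e)$ and use that $\psi_{\bx^0,U(\g,e),\bv}$ is inverse to $\chi_{U(\g,e),V}$. Applying $\psi_{\bx^0,U(\g,e),\bv} \circ \chi_{U(\g,e),V}$ to $\psi_{\bx,U(\g,e),\bv}(1\otimes v_j) = \sum_i x_{ij}\tilde\1\otimes v_i$ recovers the latter, while a direct computation using the explicit form of $\bw^0$ yields $\sum_i(\bx^0\bw^0)_{ij}\tilde\1\otimes v_i$; the vector-space identification $U(\tilde\p)\iso \tilde Q$ from Lemma~\ref{L:equiv} then reads off $\bx = \bx^0\bw^0$ in $U(\tilde\p)$. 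The $V$-block lower unitriangularity of $\bw^0$ was already noted, and $w^0_{ij}\in F_{c_j-c_i}U(\g,e)$ because $\chi$ is Kazhdan-filtered and $x_{ij}\in F_{c_j-c_i}U(\tilde\p)$. For (b), given $\bx=\bx^0\bw^0$ with $\bw^0$ as specified, $V$-block lower unitriangularity of $\bx$ and the bound $x_{ij}\in F_{c_j-c_i}U(\tilde\p)$ follow by a direct index check and Kazhdan filtered-degree additivity; to prove \eqref{e:lift} for $\bx$ I would apply $\psi_{\bx^0,U(\g,e),\bv}$ (which lands in $U(\g,e)\ostar V$ since $\bx^0$ is itself a lift matrix) to $\sum_l w^0_{lj}\otimes v_l\in U(\g,e)\otimes V$, so the resulting element $\sum_i x_{ij}\tilde\1\otimes v_i$ is automatically $\n$-invariant, and upon extracting coefficients of $v_i$ and using $\tilde Q\iso U(\tilde\p)$ this is precisely \eqref{e:lift} for $\bx$.

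For (c), I would combine (a)(ii) and (b): given lift matrices $\bx$ and $\by$, write $\bx = \bx^0\bw^0_\bx$ and $\by = \bx^0\bw^0_\by$ using (a)(ii). Since $\bw^0_\by$ is $V$-block lower unitriangular with entries in the prescribed filtered pieces, it is invertible (its inverse is a finite sum of powers of $\bw^0_\by - I$, and each such power retains both properties), so $\bw := (\bw^0_\by)^{-1}\bw^0_\bx$ satisfies $\bx = \by\bw$ as required. Conversely, given $\bx = \by\bw$ with $\bw$ of the prescribed form, substituting $\bx = \bx^0\bw^0_\bx$ yields $\by = \bx^0\bw^0_\bx\bw^{-1}$, so (b) gives that $\by$ is a lift matrix. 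The main obstacle is the calculation in (a)(i) identifying \eqref{e:lift} as exactly the $\n$-invariance condition for $\psi_{\bx,M,\bv}$, together with the bootstrapping of its bijectivity from the $V$-block lower unitriangular structure via $\chi_{M,V}$; once this is in place, (a)(ii), (b), and (c) amount to formal manipulations with the matrices $\bx$, $\bx^0$, and $\bw^0$.
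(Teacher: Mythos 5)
Your proposal is correct and follows essentially the same route as the paper, with a minor reorganization: you establish (a)(i) directly by the invariance calculation plus inversion of $\chi_{M,V}\circ\psi_{\bx,M,\bv}$, whereas the paper first proves (a)(ii) for $M=U(\g,e)$ via Lemma~\ref{L:inverse} and then deduces (a)(i) by factoring $\psi_{\bx,M,\bv} = \psi_{\bx^0,M,\bv}\circ(\text{mult.\ by }\bw^0)$. The key computational content — identifying \eqref{e:lift} with $\n$-invariance of $\sum_i x_{ij}\tilde\1\otimes m\otimes v_i$, recognizing $\bw^0=(\chi(x_{ij}\tilde\1))$ as $V$-block lower unitriangular with entries in $U(\g,e)$, and the matrix manipulations for (b), (c) — coincides with the paper's, and your treatment of (c) (explicit invertibility of $\bw^0_\by$ as a finite geometric series preserving both the block structure and Kazhdan degree bounds) fills in exactly what the paper leaves to the reader.
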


\begin{proof}
Let $\bx$ be an lift matrix for $\bv$. Consider the case $M =
U(\g,e)$, and as usual identify $(\tilde Q \otimes_{U(\g,e)}
U(\g,e)) \otimes V$ with $\tilde Q \otimes V$. Since $\bx$ satisfies
\eqref{e:lift}, we have that $\sum_{i=1}^n x_{ij} \tilde \1 \otimes
v_i \in U(\g,e) \ostar V$.  It follows from Lemma~\ref{L:inverse}
that there exist $w^0_{kj} \in U(\g,e)$ such that
$$
\sum_{i=1}^n x_{ij} \otimes v_i = \sum_{i,k=1}^n x^0_{i,k} w^0_{k,j}
\otimes v_i
$$
Equating coefficients gives $\bx = \bx^0 \bw^0$, where $\bw^0 =
(w^0_{ij})$. Since $\bx$ and $\bx^0$ are $V$-block lower
unitriangular, so is $\bw^0$.  This proves (a)(ii).

Now consider general $M$.  From the factorization $\bx = \bx^0
\bw^0$, we see that $\psi_{\bx,M,\bv}$ is the composition of the map
$M \otimes V \to M \otimes V$ given by
\begin{equation} \label{e:wiso}
m \otimes v_j \mapsto \sum_{i=1}^n w^0_{ij} m \otimes v_i
\end{equation}
with $\psi_{\bx^0,M,\bv}$. The map in \eqref{e:wiso} is an
isomorphism as $\bw$ is lower unitriangular and thus invertible, and
$\psi_{\bx^0,M,\bv}$ is an isomorphism by Lemma~\ref{L:inverse}.
Therefore, $\psi_{\bx,M,\bv}$ is an isomorphism proving (a)(i).

One can check via a straightforward calculation that if $\bx$ is of
the form given in (b), then it satisfies the conditions to be a lift
matrix in Definition~\ref{D:lift}.  Then (c) is a consequence of
(a)(ii) and  (b); it is straightforward to check that the condition
on filtered degrees holds.
\end{proof}

\begin{Remark} \label{R:allisos}
We now describe all the isomorphisms of vector spaces given by
Proposition~\ref{P:lift}(a)(i).  This is only really possible in
case $M = U(\g,e)$, so we restrict to this situation.  We identify
$(\tilde Q \otimes_{U(\g,e)} U(\g,e)) \otimes V$ with $\tilde Q
\otimes V$.

Consider an isomorphism $\psi : U(\g,e) \otimes V \to U(\g,e) \ostar
V$ of vector spaces that is filtered with respect to the Kazhdan
filtration. We have $\psi(\tilde \1 \otimes v_j) = \sum_{c_i = c_j}
a_{ij} \tilde \1 \otimes v_j + \sum_{c_i < c_j} z_{ij}\tilde \1$,
where $a_{ij} \in \C$ and $z_{ij} \in U(\tilde \p)$.  The
isomorphisms in Proposition~\ref{P:lift} are precisely those for
which $a_{ij} = \delta_{ij}$. Thus any such filtered isomorphism
$\psi$ can be factorized as the composition an isomorphism $U(\g,e)
\otimes V \isoto U(\g,e) \otimes V$ of the form $u \otimes v_j
\mapsto \sum_{c_i = c_j} a_{ij} u \otimes v_i$, with an isomorphism
$\psi_{\bx,\bv}$ from Proposition~\ref{P:lift}.

Another characterization of the isomorphisms from Proposition
\ref{P:lift}(a)(i) is given in term of the loop filtration discussed
in the next subsection.  Note that $\psi: U(\g,e) \ostar V \isoto
U(\g,e) \otimes V$ being filtered with respect to the Kazhdan
filtration implies that it is filtered with respect to the loop
filtration.  Then the $\psi_{\bx,\bv}$ are precisely those $\psi$
such that the associated graded map with respect to the loop
filtration identifies with the identity map through Proposition
\ref{P:loop}.
\end{Remark}

\subsection{The loop filtration} \label{ss:loop}

In Proposition~\ref{P:loop} below, we prove a compatibility result
regarding the loop filtration and translation.  Throughout this
subsection let $M$ be a $U(\g,e)$-module generated by the finite
dimensional subspace $M_0$, and let $V$ be a finite dimensional
$U(\g)$-module with ordered basis $\bv$ as in \S\ref{ss:lift}.

Using the loop filtration $(F_j' U(\g,e))_{j \in \Z_{\ge 0}}$ of
$U(\g,e)$ from \S\ref{ss:nllaw} we can define a loop filtration on
$M$ by setting $F_j' M = (F_j' U(\g,e)) M_0$. In this way $M$
becomes a filtered $U(\g,e)$-module, so the associated graded module
$\gr' M$ is a module for $\gr' U(\g,e) \iso U(\g^e)$.  Also $V$ is a
$U(\g^e)$-module by restriction, so $(\gr' M) \otimes V$ has the
structure of a $U(\g^e)$-module. There is a loop filtration on
$\tilde Q$ through the identification of $\tilde Q \iso U(\tilde
\p)$.  As before the $c$-eigenspace decomposition gives a grading
and thus also a filtration of $V$. Putting this all together we
obtain loop filtrations of $(\tilde Q \otimes_{U(\g,e)} M) \otimes
V$ and $M \ostar V$.  It is easy to check that $M \ostar V$ is a
filtered $U(\g,e)$-module; therefore, the associated graded module
$\gr' (M \ostar V)$ is a module for $\gr' U(\g,e) \iso U(\g^e)$.

We can now state and prove the compatibility result of this
subsection.

\begin{Proposition} \label{P:loop}
There is an isomorphism $\gr' (M \ostar V) \iso (\gr' M) \otimes V$
of $U(\g^e)$-modules.
\end{Proposition}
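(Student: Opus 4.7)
My plan is to use the lift-matrix isomorphism from Proposition~\ref{P:lift}, adapted to the loop filtration, to produce the desired $U(\g^e)$-module isomorphism at the associated graded level.

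The first step is to show that a lift matrix $\bx = (x_{ij})$ for $\bv$ can be chosen so that $x_{ij} \in F'_{c_j - c_i} U(\tilde \p)$, i.e.\ the loop degree of $x_{ij}$ is at most $c_j - c_i$. The defining equation \eqref{e:lift} is homogeneous with respect to the $c$-weight, since for $x \in \g(-d) \cap \n$ the matrix coefficient $b_{ik}(x)$ vanishes unless $c_k - c_i = d$; the system therefore decouples according to the quantity $c_j - c_i$, and the iterative construction of a lift matrix (by induction on $c_j - c_i$, as in the proof of Proposition~\ref{P:lift}) can be executed so that at each stage the solution has loop degree at most $c_j - c_i$.

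With such a lift matrix in hand, the formula $\psi_{\bx,M,\bv}(m \otimes v_j) = \sum_i x_{ij}\tilde\1 \otimes m \otimes v_i$ sends $F'_k M \otimes \C v_j$ into $F'_{k+c_j}(M \ostar V)$, because each summand has loop degree at most $(c_j - c_i) + k + c_i = k + c_j$. An analogous loop-degree analysis for the inverse $\chi_{M,V}$, using the description of $\chi$ on the monomial basis of $\tilde Q$ from Lemma~\ref{L:free}, shows that $\psi_{\bx,M,\bv}$ is strictly filtered. Consequently $\gr'\psi_{\bx,M,\bv} : \gr' M \otimes V \to \gr'(M \ostar V)$ is an isomorphism of graded vector spaces.

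Finally one verifies that this map is $U(\g^e)$-equivariant. The $U(\g^e)$-action on $\gr'(M \ostar V)$ factors through $\theta: U(\g^e) \isoto \gr' U(\g,e)$ from \eqref{e:theta}, acting by left multiplication on the $\tilde Q$-factor. For $y \in \g^e$ one expands $\theta(y) \cdot \gr'\psi_{\bx,M,\bv}(\bar m \otimes v_j)$ by commuting $\theta(y)$ past each $x_{ij}$; modulo lower loop degree, the commutator $[\theta(y), x_{ij}]$ is governed by the lift-matrix equation \eqref{e:lift} and reproduces the term $\sum_k b_{ik}(y) x_{kj}$, which accounts for the diagonal action on $V$. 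For $y \in \g^e(n)$ with $n > 0$ the computation is direct since $\theta(y) = y$; for $y \in \g^e(0)$ the correction term $\frac{1}{2} \sum_i [y, z_i^*]^\ne z_i^\ne$ in \eqref{e:theta0} must also be accounted for, and is designed precisely so that it combines with the loop-degree-zero part of the lift matrix to yield the diagonal action. The main obstacle is this final compatibility check for $y \in \g^e(0)$, where one must carefully track the interaction of the $\theta$-correction, the top loop-degree piece of the lift matrix, and the $\n$-invariance condition in order to match the diagonal action on $\gr' M \otimes V$.
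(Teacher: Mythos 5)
Your overall strategy — produce the isomorphism via a lift matrix $\bx$ and compare loop degrees — is the same as the paper's, but two of your steps have problems. The preliminary construction of a lift matrix with $x_{ij}\in F'_{c_j-c_i}U(\tilde\p)$ is unnecessary: Definition~\ref{D:lift} already requires $x_{ij}\in F_{c_j-c_i}U(\tilde\p)$ in the \emph{Kazhdan} filtration, and on $U(\tilde\p)$ the Kazhdan degree of any monomial dominates its loop degree (a factor from $\g(j)\subseteq\p$ contributes $j+2$ vs.\ $j$, a factor from $\k^\ne$ contributes $1$ vs.\ $0$), with equality only for scalars. So the loop-degree bound — together with the \emph{strict} inequality needed for the off-diagonal entries when $c_i<c_j$ — already holds for every lift matrix; this is precisely the observation the paper makes, and your inductive re-construction rederives something that is automatic.

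The more serious issue is the equivariance check. The $U(\g^e)$-action on $\gr'(M\ostar V)$ is inherited from the $U(\tilde\g)$-action through $\tilde\Delta$, so $\theta(y)$ for $y\in\g^e$ acts via $\tilde\Delta(\theta(y))=\theta(y)\otimes 1+1\otimes y$, hitting both tensor factors — not only by left multiplication on the $\tilde Q$-factor as you state. Moreover equation~\eqref{e:lift} constrains $[x-x^\ne,x_{ij}]$ only for $x\in\n$; it encodes the $\n$-invariance of $\psi_{\bx,M,\bv}(m\otimes v_j)$ and gives no information about $[\theta(y),x_{ij}]$ for $y\in\g^e$, so it cannot "reproduce the term $\sum_k b_{ik}(y)x_{kj}$." What actually happens is simpler: the commutators $[\theta(y),x_{ij}]$ have strictly lower loop degree than the leading term (again because Kazhdan degree dominates loop degree) and vanish on passing to $\gr'$, while the diagonal action on $V$ comes directly from the $1\otimes y$ summand of $\tilde\Delta(\theta(y))$; one then moves $x_{ij}\theta(y)\tilde\1$ across the balanced tensor product, using $\Theta(y)\in U(\g,e)$ with $\gr'\Theta(y)=\theta(y)$. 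Once this is set up the $\g^e(0)$ case poses no special difficulty — the $\theta$-correction is carried along unchanged through $\tilde\Delta$ — so the final check is a short routine calculation rather than an obstacle, which is why the paper treats it as clear.
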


\begin{proof}
We just consider the case $M = U(\g,e)$, the general case can be
dealt with similarly, and we leave the details to the reader.  As
usual we identify $(\tilde Q \otimes_{U(\g,e)} U(\g,e)) \otimes V$
with $\tilde Q \otimes V$.

Let $\bx$ be a lift matrix for $\bv$, and let $\psi_{\bx,\bv} :
U(\g,e) \otimes V \isoto U(\g,e) \ostar V$ be the corresponding
isomorphism. Consider $\psi_{\bx,\bv}(1 \otimes v_j) = \sum_{i=1}^n
x_{ij}\tilde \1 \otimes v_i$. The condition on the filtered degree
of entries $\bx$ in the Kazhdan filtration in Definition
\ref{D:lift} means that each of the terms $x_{ij}\tilde \1 \otimes
v_i$ for $i \neq j$ is zero or has strictly lower degree than
$\tilde \1 \otimes v_j$ in the loop filtration. Recalling the
isomorphism $\theta$ from \eqref{e:theta}, we deduce that $\theta
\otimes \id_V : U(\g^e) \otimes V \to U(\p) \ostar V$ maps
isomorphically onto $\gr'(U(\g,e) \ostar V)$. It is clear that this
isomorphism respects the $U(\g^e)$-module structure, so the proof is
complete.
\end{proof}

As a corollary we obtain the following result, which can be proved
using a standard PBW basis argument.

\begin{Corollary} \label{C:gen}
Suppose $\gr' M$ is generated by $\gr' M_0$ as a $U(\g^e)$-module
and let $\bx$ be a lift matrix for $\bv$. Then $M \ostar V$ is
generated by $\psi_{\bx,M,\bv}(M_0 \ostar V)$.
\end{Corollary}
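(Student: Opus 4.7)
The plan is to pass to the loop-graded picture provided by Proposition~\ref{P:loop} and then appeal to a standard filtration argument. Equip $M$ with the loop filtration $F_j' M = (F_j' U(\g,e)) M_0$, which induces a non-negative exhaustive loop filtration on $M \ostar V$ as in \S\ref{ss:loop}. Let $N$ be the $U(\g,e)$-submodule of $M \ostar V$ generated by $\psi_{\bx,M,\bv}(M_0 \otimes V)$; we wish to show $N = M \ostar V$. For this it suffices to prove $\gr' N = \gr'(M \ostar V)$, since a standard filtration exhaustion (induction on loop degree, using $F_j' = 0$ for $j < 0$) then forces $N = M \ostar V$.

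To compute $\gr' N$, I would apply Proposition~\ref{P:loop} to identify $\gr'(M \ostar V) \iso (\gr' M) \otimes V$ as $U(\g^e)$-modules, noting that $U(\g^e)$ acts only through the first tensor factor (exactly as in the analogous statement of Theorem~\ref{T:iso}). From the explicit formula
$$
\psi_{\bx,M,\bv}(m \otimes v_j) = \sum_{i=1}^n x_{ij} \tilde \1 \otimes m \otimes v_i,
$$
together with the Kazhdan-degree bound $x_{ij} \in F_{c_j - c_i} U(\tilde \p)$ from Definition~\ref{D:lift}, every term with $i \ne j$ either vanishes or has strictly smaller loop degree than $\tilde \1 \otimes m \otimes v_j$; this is the same observation that drives the proof of Proposition~\ref{P:loop} in the case $M = U(\g,e)$, and it carries over verbatim for arbitrary $M$. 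Consequently, under the identification $\gr'(M \ostar V) \iso \gr' M \otimes V$, the class of $\psi_{\bx,M,\bv}(m \otimes v_j)$ corresponds to $(\gr' m) \otimes v_j$, so $\gr' N$ contains the image of $(\gr' M_0) \otimes V$.

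By hypothesis $\gr' M$ is generated by $\gr' M_0$ as a $U(\g^e)$-module; combined with the triviality of the $U(\g^e)$-action on the $V$-factor of $(\gr' M) \otimes V$, this yields that $(\gr' M) \otimes V$ is generated by $(\gr' M_0) \otimes V$ as a $U(\g^e)$-module. Hence $\gr' N = \gr'(M \ostar V)$, and the filtration argument completes the proof.

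The only genuinely technical point is verifying that the Kazhdan-degree bound on the entries of the lift matrix translates into a \emph{strict} loop-degree drop for the off-diagonal terms of $\psi_{\bx,M,\bv}(m \otimes v_j)$, so that $\gr'\psi_{\bx,M,\bv}$ matches the identity on $(\gr' M) \otimes V$ under the Proposition~\ref{P:loop} identification. This is precisely the generalization to arbitrary $M$ of the computation already performed in the proof of Proposition~\ref{P:loop} for $M = U(\g,e)$, and once it is in place everything else is a routine filtered-to-graded transfer.
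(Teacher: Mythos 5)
Your reduction-to-the-associated-graded strategy is sound and matches the spirit of the paper's remark that the corollary follows ``by a standard PBW basis argument,'' but there is a genuine error in the key step. You assert that in the $U(\g^e)$-module $(\gr' M) \otimes V$ appearing in Proposition~\ref{P:loop}, the algebra $U(\g^e)$ ``acts only through the first tensor factor, exactly as in the analogous statement of Theorem~\ref{T:iso}.'' That is false: the paper is explicit in \S\ref{ss:loop} that $V$ is regarded as a $U(\g^e)$-module by restriction from $U(\g)$, and $(\gr' M) \otimes V$ carries the diagonal (tensor product) $U(\g^e)$-module structure. The analogy with Theorem~\ref{T:iso} does not hold here, and the reason is a degree count. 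In the Kazhdan filtration an element $x \in \g(j)$ has degree $j+2$, while $x$ raises the $c$-grading on $V$ by only $j$, so after passing to $\gr$ the action on the $V$-factor genuinely drops by two degrees and becomes zero. In the loop filtration $x \in \g^e(j)$ has degree exactly $j$, which matches the shift on $V$, so nothing degenerates and $\g^e$ acts nontrivially on the $V$-factor.

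Because of this, your sentence ``combined with the triviality of the $U(\g^e)$-action on the $V$-factor \ldots this yields that $(\gr' M) \otimes V$ is generated by $(\gr' M_0) \otimes V$'' does not establish what you need. The conclusion is nevertheless correct, but it requires the actual PBW argument the paper is alluding to: for $x \in \g^e$, $m \in \gr' M$ and $v \in V$ one has $xm \otimes v = x\cdot(m \otimes v) - m \otimes xv$, and an induction on the length of an ordered PBW monomial $u \in U(\g^e)$ then shows that $u m_0 \otimes v$ lies in the $U(\g^e)$-submodule generated by $(\gr' M_0) \otimes V$ for all $m_0 \in \gr' M_0$. Once you replace the (false) triviality claim by this induction, the rest of your argument --- the identification of $\gr'\psi_{\bx,M,\bv}$ with the identity via the strict loop-degree drop on off-diagonal terms, the inclusion $(\gr' M_0)\otimes V \subseteq \gr' N$, and the final filtration-exhaustion step --- goes through as you wrote it.
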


\section{Basic properties of translation} \label{S:basic}

In this section we record some basic properties of translations;
they are generalizations of results from \cite[\S8.2]{BK2}.

\subsection{Tensor identity}
\label{ss:tildepmod}

Proposition~\ref{P:tildepmod} below is a generalization of
\cite[Cor.\ 8.2]{BK2} from the case where $\g = \gl_n(\C)$
(when there is an even good
grading for $e$); it can be proved using the arguments in {\em loc.\
cit.\ }so we do not include a proof here. For the statement, note
that we can view a $U(\tilde \p)$-module $M$ as a $U(\g,e)$-module
by restriction. Therefore, $M \ostar V$ is defined as a
$U(\g,e)$-module for a finite dimensional $U(\g)$-module $V$.  Also
$M \otimes V$ can be viewed as a $U(\tilde \p)$-module through
$\tilde \Delta$, and thus as a $U(\g,e)$-module by restriction.

\begin{Proposition} \label{P:tildepmod}
Let $M$ be a $U(\tilde \p)$-module and $V$ a finite dimensional
$U(\g)$-module.  Then
\begin{enumerate}
\item[(i)] The restriction of the map $(\tilde Q \otimes_{U(\g,e)} M)
\otimes V \to M \otimes V$ defined by
$$
u\tilde \1 \otimes m \otimes v \mapsto um \otimes v,
$$
for $u \in U(\tilde \p)$, $m \in M$ and $v \in V$, defines a
canonical natural isomorphism
$$
\mu_{M,V} : M \ostar V \isoto M \otimes V.
$$
\item[(ii)] Let $\bv = (v_1,\dots,v_n)$ be a basis of $V$ as in
\S\ref{ss:lift}, let $\bx$ be a lift matrix for $\bv$ and $\by$ the
inverse to $\bx$. Then the inverse map to $\mu_{M,V}$ sends $m
\otimes v_j$ to $\sum_{i,k=1}^n x_{ik} \tilde \1 \otimes y_{kj} m
\otimes v_i$, for $m \in M$.
\end{enumerate}
\end{Proposition}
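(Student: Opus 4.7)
The plan is to produce an explicit two-sided inverse to $\mu_{M,V}$ built from a lift matrix, exploiting the fact that $M$ extends from a $U(\g,e)$-module to a $U(\tilde\p)$-module.

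First I would verify that the assignment $u\tilde\1\otimes m\otimes v\mapsto um\otimes v$ descends to a well-defined linear map $\mu'_{M,V}:(\tilde Q\otimes_{U(\g,e)}M)\otimes V\to M\otimes V$: the $U(\g,e)$-balancing $uw\tilde\1\otimes m\otimes v=u\tilde\1\otimes wm\otimes v$ for $w\in U(\g,e)$ is preserved because $U(\g,e)\subseteq U(\tilde\p)$ and the $U(\tilde\p)$-action on $M$ is associative. The formula makes $\mu'_{M,V}$ manifestly natural in $M$ and $V$, and a direct check shows that it intertwines the $U(\g,e)$-module structures on both sides. Set $\mu_{M,V}:=\mu'_{M,V}|_{M\ostar V}$, so that $\mu_{M,V}$ is a natural $U(\g,e)$-module map; it remains to show it is bijective and to identify its inverse.

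To prove bijectivity, fix a weight basis $\bv=(v_1,\dots,v_n)$ of $V$ and a lift matrix $\bx=(x_{ij})$ for $\bv$, so that Proposition~\ref{P:lift}(a)(i) furnishes an isomorphism $\psi_{\bx,M,\bv}:M\otimes V\isoto M\ostar V$. A direct calculation gives
\begin{equation*}
(\mu_{M,V}\circ\psi_{\bx,M,\bv})(m\otimes v_j)=\sum_{i=1}^n x_{ij}m\otimes v_i,
\end{equation*}
which is a $V$-block lower unitriangular endomorphism of $M\otimes V$ with respect to the $c$-eigenspace decomposition of $V$. Since $\bx$ is $V$-block lower unitriangular with entries in $U(\tilde\p)$, the matrix inverse $\by=\bx^{-1}$ exists over $U(\tilde\p)$ and is again $V$-block lower unitriangular, so the endomorphism $\alpha(m\otimes v_j):=\sum_{k=1}^n y_{kj}m\otimes v_k$ of $M\otimes V$ inverts the composition above. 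Hence $\mu_{M,V}$ is an isomorphism with inverse $\psi_{\bx,M,\bv}\circ\alpha$, and expanding this composition yields the formula asserted in (ii):
\begin{equation*}
\mu_{M,V}^{-1}(m\otimes v_j)=\psi_{\bx,M,\bv}\!\left(\sum_{k=1}^n y_{kj}m\otimes v_k\right)=\sum_{i,k=1}^n x_{ik}\tilde\1\otimes y_{kj}m\otimes v_i.
\end{equation*}

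The main point requiring care is the existence of $\by=\bx^{-1}$ as a $V$-block lower unitriangular matrix over $U(\tilde\p)$; this follows from a Neumann-series argument, since the strictly block-lower-triangular part of $\bx$ is nilpotent by the finite block decomposition of $V$ determined by the $c$-eigenvalues $c_1\geq\cdots\geq c_n$. The canonicity assertion in (i) is built into the intrinsic definition of $\mu_{M,V}$ via the formula $u\tilde\1\otimes m\otimes v\mapsto um\otimes v$ and makes no reference to $\bx$, even though the explicit inverse in (ii) does.
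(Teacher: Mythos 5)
The paper itself omits the proof of Proposition~\ref{P:tildepmod}, deferring to the arguments in \cite[Cor.\ 8.2]{BK2}; there is thus no in-paper proof to compare against directly. Your reconstruction is correct and uses exactly the machinery the paper builds up for this purpose. The well-definedness check is right: the $U(\g,e)$-balancing relation is absorbed by associativity of the $U(\tilde\p)$-action on $M$, and the identification $\tilde Q \iso U(\tilde\p)$ as vector spaces ensures the formula $u\tilde\1\otimes m\otimes v\mapsto um\otimes v$ makes sense. Your computation $(\mu_{M,V}\circ\psi_{\bx,M,\bv})(m\otimes v_j)=\sum_i x_{ij}m\otimes v_i$ is correct, and the observation that a $V$-block lower unitriangular matrix over $U(\tilde\p)$ is invertible (finite Neumann series, since the strictly lower part is nilpotent for the finite block structure) is the key point; composing with $\alpha$ given by $\by=\bx^{-1}$ and then with $\psi_{\bx,M,\bv}$ gives precisely the formula in (ii), namely $\mu_{M,V}^{-1}(m\otimes v_j)=\sum_{i,k}x_{ik}\tilde\1\otimes y_{kj}m\otimes v_i$. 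The logic that $\mu_{M,V}\circ\psi_{\bx,M,\bv}$ and $\psi_{\bx,M,\bv}$ being isomorphisms forces $\mu_{M,V}$ to be one, with the stated inverse, is sound. The $U(\g,e)$-equivariance of $\mu_{M,V}$, which you state as a ``direct check,'' does indeed follow by unwinding $\tilde\Delta$ on both sides: for $w\in U(\g,e)\subseteq U(\tilde\p)$ one gets $\mu_{M,V}(w\cdot(u\tilde\1\otimes m\otimes v))=\sum (w_{(1)}u)m\otimes w_{(2)}v=w\cdot(um\otimes v)$, using that $\tilde\Delta$ restricts to $U(\tilde\p)\to U(\tilde\p)\otimes U(\g)$. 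In short, this is a correct and self-contained proof, and it follows the approach one would expect from \cite[Cor.\ 8.2]{BK2} adapted to the non-linear Lie algebra formalism of this paper.
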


\subsection{Associativity and adjunction}
\label{ss:assocdual}

In the statement of Lemma~\ref{L:assoc}, we use the isomorphisms of
the form $\chi_{M,V}$ from \eqref{e:chiMV}.  The lemma can be proved
in exactly the same way as in \cite[\S 8.2]{BK2}; so we omit the
details.

\begin{Lemma} \label{L:assoc}
Let $M$ be a $U(\g,e)$-module and $V$, $V'$ be finite-dimensional
$U(\g)$-modules.  Then the linear map from $(\tilde Q
\otimes_{U(\g,e)} ((\tilde Q \otimes_{U(\g,e)} M) \otimes V))
\otimes V' \to (\tilde Q \otimes_{U(\g,e)} M) \otimes (V \otimes
V')$ given by
$$
(u'\tilde \1 \otimes ((u \tilde \1 \otimes m) \otimes v)) \otimes v'
\to (u'(u \tilde \1 \otimes m) \otimes v) \otimes v'),
$$
for $u', u \in U(\tilde \p)$, $m \in M$ $v \in V$ and $v' \in V'$,
restricts to a natural isomorphism
$$
a_{M,V,V'} : (M \ostar V) \ostar V' \isoto M \ostar(V \otimes V').
$$
Moreover the following diagram commutes
$$
\begin{CD}
(M \ostar V) \ostar V' & @>{a_{M,V,V'}}>> & M \ostar (V \otimes V') \\
@V \chi_{M\ostar V,V'} VV&& @VV \chi_{M,V \otimes V'} V\\
(M \ostar V) \otimes V' &@>>{\chi_{M,V} \otimes \id_{V'}}>  & M
\otimes V \otimes V'
\end{CD}
$$
\end{Lemma}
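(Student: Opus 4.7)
My plan is to follow the blueprint of \cite[\S 8.2]{BK2}. The conceptual heart of the matter is that, via Skryabin's equivalence (Theorem~\ref{T:skryabin}), the functor $\tilde Q \otimes_{U(\g,e)} ?$ converts translation by $V$ into the ordinary tensor product with $V$ through $\tilde \Delta$. Since ordinary tensor products are associative, translations must be as well; this is the hidden content of the lemma.

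Concretely, I would first verify that the given formula defines a well-defined linear map on $(\tilde Q \otimes_{U(\g,e)} ((\tilde Q \otimes_{U(\g,e)} M) \otimes V)) \otimes V'$. The outer $\otimes_{U(\g,e)}$ requires that for $w \in U(\g,e)$ the image of $u'\tilde\1\cdot w \otimes z \otimes v'$ equals that of $u'\tilde\1 \otimes w z \otimes v'$. This reduces to showing that on the $\n$-invariant subspace $M \ostar V$, the $U(\g,e)$-module action coming from $\tilde \Delta$ coincides with left multiplication on the first $\tilde Q$-factor. That follows because for $w \in U(\g,e)$ and $x \in \n$ one has $\Pr([x - x^\ne, w]) = 0$, so the discrepancy between the two actions lies in $\tilde I$ and vanishes. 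With well-definedness in hand, one recognises the formula as the composition of the counit isomorphism $\tilde Q \otimes_{U(\g,e)} (M \ostar V) \isoto (\tilde Q \otimes_{U(\g,e)} M) \otimes V$ (from Skryabin's equivalence, sending $u'\tilde\1 \otimes z \mapsto u'\cdot z$) with the obvious vector-space associativity. Applying $H^0(\n,?)$ then produces the natural isomorphism $a_{M,V,V'}$.

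For the commutative diagram, a direct chase on a representative $(u'\tilde\1 \otimes ((u\tilde\1 \otimes m) \otimes v)) \otimes v'$ produces $\chi(\chi(u'\tilde\1) u\tilde\1)\, m \otimes v \otimes v'$ along the bottom path and $\chi(u'u\tilde\1)\, m \otimes v \otimes v'$ along the top path. The agreement of these two expressions on $(M \ostar V) \ostar V'$ follows from the right $U(\g,e)$-linearity of $\chi : \tilde Q \to U(\g,e)$, which is immediate from the construction in \eqref{e:chi} and the freeness statement of Lemma~\ref{L:free}: writing $u'\tilde\1 = \chi(u'\tilde\1) \tilde\1 + r$ with $r \in \ker \chi$, the $r$-contribution to $\chi(u'u\tilde\1)\, m$ vanishes on $\n$-invariants. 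Alternatively, by the naturality of $\chi$ asserted in Theorem~\ref{T:projiso} one may reduce to the universal case $M = U(\g,e)$ and verify the identity directly using a lift matrix for $V \otimes V'$ as in Proposition~\ref{P:lift}.

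The main technical obstacle is the bookkeeping required to track the two different $U(\g,e)$-module structures on $(\tilde Q \otimes_{U(\g,e)} M) \otimes V$ (via $\tilde \Delta$ versus via left multiplication on the first factor) and to check that they coincide on $M \ostar V$. Once that is settled, the associativity isomorphism is forced by Skryabin's equivalence together with vector-space associativity, and the commutativity of the $\chi$-diagram is formal.
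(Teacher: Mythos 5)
Your recognition that $a_{M,V,V'}$ is just the counit of Skryabin's equivalence (Theorem~\ref{T:skryabin}) applied to $E=(\tilde Q\otimes_{U(\g,e)}M)\otimes V$, followed by the obvious associativity $E\otimes V'\cong(\tilde Q\otimes_{U(\g,e)}M)\otimes(V\otimes V')$, is exactly the right conceptual reading, and from there exactness of $H^0(\n,?)$ on $\Wh(\tilde\g,e)$ (Corollary~\ref{C:vanish}) makes the first claim immediate. But the auxiliary claim you inject to justify well-definedness --- that on $M\ostar V$ the $U(\g,e)$-action via $\tilde\Delta$ coincides with left multiplication on the $\tilde Q$-factor --- is false, and in fact the latter does not even preserve $M\ostar V$. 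A concrete check: take $\g=\sl_2$, $e$ regular, $M=\C$, $V=\C^2$ with standard basis $v_1,v_2$; then $z=\tilde\1\otimes v_1-\tfrac12 h\tilde\1\otimes v_2$ lies in $M\ostar V$, while for $\Theta=e-\tfrac12 h+\tfrac14 h^2\in U(\g,e)$ one computes that $(\Theta\otimes1)z$ is \emph{not} annihilated by the dot action of $f$, and $\tilde\Delta(\Theta)z-(\Theta\otimes1)z\neq0$. The discrepancy $\tilde\Delta(w)-w\otimes1$ lives in $U(\tilde\g)\otimes U(\g)$ and has nothing to do with $\tilde I$, so the appeal to $\Pr([x-x^\ne,w])=0$ does not control it. Fortunately, once you read the defining formula as $u'\tilde\1\otimes z\otimes v'\mapsto(\tilde\Delta(u')z)\otimes v'$ (which is what the counit actually does, and which is what must be meant --- the version with $u'$ multiplying only the first $\tilde Q$-factor is not even $U(\g,e)$-balanced), well-definedness is automatic and needs no extra claim at all: $f((u'w)\tilde\1\otimes z)=(u'w)\cdot z=u'\cdot(wz)=f(u'\tilde\1\otimes wz)$.

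The same confusion undermines the diagram chase. Your claimed outputs $\chi(u'u\tilde\1)m\otimes v\otimes v'$ and $\chi(\chi(u'\tilde\1)u\tilde\1)m\otimes v\otimes v'$ presuppose that $a_{M,V,V'}$ and the $U(\g,e)$-action on $M\ostar V$ are given by left multiplication on $\tilde Q$, which they are not; under the correct $\tilde\Delta$-reading the two composites produce sums $\sum\chi(u'_{(1)}u\tilde\1)m\otimes u'_{(2)}v\otimes v'$ and $\sum\chi(\chi(u'\tilde\1)_{(1)}u\tilde\1)m\otimes\chi(u'\tilde\1)_{(2)}v\otimes v'$ (Sweedler notation), and the equality of these on $(M\ostar V)\ostar V'$ is genuinely nontrivial. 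The ``vanishing of the $r$-contribution'' you invoke is also not available: $\chi$ is right $U(\g,e)$-linear but not a left module map, so writing $u'\tilde\1=\tilde\1\,\chi(u'\tilde\1)+r$ gives no control over $\chi(ru\tilde\1)$. What does work is the route you mention only in passing: all four maps in the square are filtered for the Kazhdan filtration, $\gr\chi_{M,V}=\eta_{M,V}$ by Theorem~\ref{T:etaMV}, and one checks commutativity at the associated graded level (where everything reduces to the identity on $\gr M\otimes V\otimes V'$) and lifts by a standard filtration argument; equivalently, one can use naturality in $M$ and $V$ to reduce to $M=U(\g,e)$ and compute directly with lift matrices as in Proposition~\ref{P:lift}, which is what the cited argument of \cite[\S8.2]{BK2} does. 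Either of these would need to replace your chase to make the second half of the lemma go through.
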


Next we transport the canonical adjunction as in \cite[\S8.2]{BK2}.
In order to do this, note that for the trivial $U(\g)$-module $\C$,
and a $U(\g,e)$-module $M$ there is a natural map $M \ostar \C \to
M$ determined by $i_M(\tilde \1 \otimes m \otimes 1) = m$, for $m
\in M$. Let $v_1,\dots,v_n$ be a basis of $V$, and let
$v^1,\dots,v^n$ be the dual basis of $V^*$. Then the unit of the
canonical adjunction is given by the composition
$$
\begin{CD}
M  &@>{i_M^{-1}}>> & M \ostar \C & @>>> & M \ostar (V \otimes V^*) &
@>{a_{M,V,V^*}^{-1}}>> & (M \ostar V) \ostar V^*,
\end{CD}
$$
where the second map is given by $(\tilde \1 \otimes m) \otimes 1
\to (\tilde \1 \otimes m) \otimes (\sum_{i=1}^n v_i \otimes v^i)$,
for $m \in M$, and the third map is from Lemma~\ref{L:assoc}.  The
counit is given by the composition
$$
\begin{CD}
(M \ostar V^*) \ostar V  &@>{a_{M,V^*,V}}>> &M \ostar (V^* \otimes
V) & @>>> & M \ostar \C & @>{i_M}>> & M,
\end{CD}
$$
where the second map is the restriction of $(u \tilde \1 \otimes m)
\otimes (f \otimes v) =  (u \tilde \1 \otimes m) \otimes \lan
f,v\ran$, for $u \in U(\tilde \p)$, $m \in M$, $f \in V^*$ and $v
\in V$, where $\lan f,v\ran$ denotes the natural pairing.

Summarizing the above discussion we obtain.

\begin{Lemma} \label{L:adjdual}
Let $V$ be a finite dimensional $U(\g)$-module and $V^*$ the dual
$U(\g)$-module.  Then $? \ostar V$ and $? \ostar V^*$ are biadjoint
functors.
\end{Lemma}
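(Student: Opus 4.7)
The plan is to exploit Skryabin's equivalence (Theorem~\ref{T:skryabin}) to transport a well-known biadjunction from the category of $U(\tilde\g)$-modules to $U(\g,e)$-modules. Recall that for a finite dimensional $U(\g)$-module $V$, the endofunctor $? \otimes V$ of $U(\tilde\g)\textrm{-mod}$ (with tensor product taken through the comultiplication $\tilde\Delta$) has $? \otimes V^*$ as both a left and right adjoint: the unit and counit of each adjunction are given by the standard coevaluation $\C \to V \otimes V^*$ (respectively $\C \to V^* \otimes V$) and evaluation $V^* \otimes V \to \C$ (respectively $V \otimes V^* \to \C$), combined with the associativity constraint and the unit isomorphism for the trivial module $\C$. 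Verification of the triangle identities in this setting is a standard exercise using the fact that $V$ is finite dimensional, so $V^{**} \iso V$ canonically.

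Next, I would observe that $? \otimes V$ and $? \otimes V^*$ both preserve the subcategory $\Wh(\tilde\g,e) \sub U(\tilde\g)\textrm{-mod}$ (this was already noted in \S\ref{ss:nllatrans}), and that the unit and counit maps above are morphisms in $\Wh(\tilde\g,e)$, since they arise from morphisms of $U(\g)$-modules tensored with the identity. Thus $? \otimes V$ and $? \otimes V^*$ are biadjoint as endofunctors of $\Wh(\tilde\g,e)$. By Theorem~\ref{T:skryabin}, the quasi-inverse equivalences $\tilde Q \otimes_{U(\g,e)} ?$ and $H^0(\n,?)$ identify $U(\g,e)\textrm{-mod}$ with $\Wh(\tilde\g,e)$, and by Definition~\ref{D:nllatrans} the translation functor $? \ostar V$ is precisely the transport of $? \otimes V$ under this equivalence (and similarly for $V^*$). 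Transporting a biadjoint pair through an equivalence of categories produces a biadjoint pair, so $? \ostar V$ and $? \ostar V^*$ are biadjoint.

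What remains is to match the transported unit and counit with the explicit ones described in the excerpt. For $M \in U(\g,e)\textrm{-mod}$, the transported unit is the composition
$$
M \isoto H^0(\n, \tilde Q \otimes_{U(\g,e)} M) \to H^0(\n, (\tilde Q \otimes_{U(\g,e)} M) \otimes (V \otimes V^*)) \iso (M \ostar V) \ostar V^*,
$$
where the middle arrow applies the coevaluation and the final isomorphism uses the associativity $a_{M,V,V^*}^{-1}$ from Lemma~\ref{L:assoc}. Using Proposition~\ref{P:tildepmod} to identify $M \ostar \C$ with $M$ and unraveling, this is exactly the unit described before the statement of the lemma; an analogous check handles the counit. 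The key compatibility is that the isomorphism $i_M : M \ostar \C \isoto M$ coincides with the canonical isomorphism $M \otimes \C \iso M$ transported through Skryabin's equivalence, which follows directly from the formula $i_M(\tilde\1 \otimes m \otimes 1) = m$.

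The bulk of the work is bookkeeping rather than a genuine obstacle; the only delicate point is checking that the coevaluation and evaluation maps, when inserted into the tensor product $\tilde Q \otimes_{U(\g,e)} M) \otimes (V \otimes V^*)$, really do commute with the dot action of $\n$ so that they descend to maps between translations. This commutativity is automatic because these maps are obtained from $U(\g)$-linear maps $\C \to V \otimes V^*$ and $V^* \otimes V \to \C$ tensored with the identity on $\tilde Q \otimes_{U(\g,e)} M$, and the dot action \eqref{e:dotnlla} acts as a derivation with respect to the tensor factor $V$ (or $V \otimes V^*$) via the coproduct of $\n \sub U(\g)$.
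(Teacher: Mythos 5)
Your proposal is correct and takes essentially the same route as the paper: both proofs transport the canonical biadjunction between $?\otimes V$ and $?\otimes V^*$ through Skryabin's equivalence to obtain the biadjunction for $?\ostar V$ and $?\ostar V^*$. The paper is terser, writing down the unit and counit explicitly (via $i_M$ and the associativity isomorphism $a_{M,V,V^*}$ of Lemma~\ref{L:assoc}) and citing \cite[\S8.2]{BK2}, whereas you spell out the intermediate step in $\Wh(\tilde\g,e)$ and then verify that the transported maps agree with those formulas; the content is the same.
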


\section{Translations and highest weight theory} \label{S:hw}

In this section we consider the relationship between translations
and the highest weight theory from \cite[\S4]{BGK}. First in
\S\ref{ss:hwrec}, we give a brief recollection of some definitions
from highest weight theory. Then in \S\ref{ss:catO}, we recall the
definition of the category $\cO(e)$ of $U(\g,e)$-modules from
\cite[\S4.4]{BGK}, and show that it is stable under translations;
the category $\cO(e)$ is an analogue of the usual BGG category $\cO$
of $U(\g)$-modules. We consider translations of Verma modules for
$U(\g,e)$ in \S\ref{ss:verma}, and in particular show that the
translation of a Verma module has a filtration by Verma modules, ,
in Theorem \ref{T:filtverma}.

\subsection{Recollection on highest weight theory} \label{ss:hwrec}

The recollection below is as brief as possible for our purposes.
Full details can be found in \cite[\S 4]{BGK}.

The {\em restricted root system} $\Phi^e \sub (\t^e)^*$ associated
to $e$ is defined by the $\t^e$-weight space decomposition
$$
\g = \g_0 \oplus \bigoplus_{\alpha \in \Phi^e} \g_\alpha,
$$
where $\g_\alpha = \{x \in \g \mid [t,x] = \alpha(t)x \text{ for all
} t \in \t^e\}$, i.e.\ $\Phi^e$ consists of the elements of $\Phi$
restricted to $\t^e$.  The reader is referred to \cite[\S2 and
\S3]{BG} for information on restricted root systems.

We have $e \in \g_0$ and the good grading on $\g$ gives a good
grading of $\g_0$.  Thus the finite $W$-algebra $U(\g_0,e)$ is
defined in analogy to $U(\g,e)$.  The good grading of $\g_0$ for $e$
must be even as $e$ is distinguished in $\g_0$, see for example
\cite[5.7.6]{Ca}. Therefore $U(\g_0,e) \sub U(\p_0)$, where $\p_0 =
\p \cap \g_0$.  The analogue $\theta_0 : U(\g_0^e) \to U(\p_0)$ of
$\theta$ from \eqref{e:theta0} is simply the inclusion, so we can
view $U(\g_0^e(0))$ as a subalgebra of $U(\g_0,e)$.

From $\theta$ we obtain an embedding $\t^e \into U(\g,e)$, which we
use to identify $\t^e$ with a subalgebra of $U(\g,e)$. Thus there is
an adjoint action of $\t^e$ on $U(\g,e)$ giving the restricted root
space decomposition
$$
U(\g,e) = \bigoplus_{\alpha \in \Z\Phi^e} U(\g,e)_\alpha.
$$
of $U(\g,e)$, where $U(\g,e)_\alpha = \{ u \in U(\g,e) \mid [t,u] =
\alpha(t)u \text{ for all } t \in \t^e\}$. We choose a system
$\Phi^e_+$ of positive roots in the restricted root system $\Phi^e$;
we recall from \cite[\S2]{BG} that this is equivalent to choosing a
parabolic subalgebra $\q$ of $\g$ with Levi subalgebra $\g_0$.  This
choice of positive roots gives rise to a partial order on $(\t^e)^*$
in the usual way, i.e.\ $\alpha \le \beta$ if and only if $\beta -
\alpha \in \Z_{\ge 0} \Phi^e_+$.

We define $U(\g,e)_\sharp$ to be the left ideal of $U(\g,e)$
generated by $U(\g,e)_\alpha$ for $\alpha \in \Phi^e_+$.  Then by
\cite[Thm.\ 4.3]{BGK}, $U(\g,e)_{0,\sharp} = U(\g,e)_0 \cap
U(\g,e)_\sharp$ is a two-sided ideal of $U(\g,e)_0$, and  the
quotient $U(\g,e)_0/U(\g,e)_{0,\sharp}$ is isomorphic to
$U(\g_0,e)$. Next we explain this isomorphism explicitly.  We define
$U(\tilde \p)_0$ and $U(\tilde \p)_{0,\sharp}$ in analogy to
$U(\g,e)_0$ and $U(\g,e)_{0,\sharp}$. We have $U(\tilde \p)_0 =
U(\p_0) \oplus U(\tilde \p)_{0,\sharp}$.
Thus we may
define the projection $\pi : U(\tilde \p)_0 \onto U(\p_0)$ along
this decomposition.  Recall that $b_1,\dots,b_r$ is a basis for $\n$
with $b_i \in \g(-d_i)$ of weight $\beta_i \in \Phi$. By \cite[Lem.\
4.1]{BGK}
$$
\gamma = \sum_{\substack{1 \leq i \leq r \\
\beta_i|_{\t^e}\in \Phi^e_-}}\beta_i
$$
is a character of $\p_0$, so we can define the shift $S_{-\gamma} :
U(\p_0) \to U(\p_0)$ by $S_{-\gamma}(x) =  x - \gamma(x)$.  Now
\cite[Thm.\ 4.3]{BGK} says that composition
\begin{equation} \label{e:hwiso}
\xymatrix{U(\g,e)_0 \ar@{^{(}->}[r] & U(\tilde \p)_0
\ar@{>>}[r]^{\pi} & U(\p_0) \ar[r]^{S_\gamma} & U(\p_0)} \\
\end{equation}
has image equal to $U(\g_0,e)$ and kernel equal to
$U(\g,e)_{0,\sharp}$; giving the desired isomorphism
$U(\g,e)_0/U(\g,e)_{0,\sharp} \isoto U(\g_0,e)$.

Given a finite dimensional $U(\g_0,e)$-module $L$ we define the
induced $U(\g,e)$-module
$$
M(L) = U(\g,e)/U(\g,e)_\sharp \otimes_{U(\g_0,e)} L,
$$
where $U(\g,e)/U(\g,e)_\sharp$ is viewed as a right
$U(\g_0,e)$-module via the isomorphism $U(\g_0,e) \iso
U(\g,e)_0/U(\g,e)_{0,\sharp}$. We call $M(L)$ the {\em quasi-Verma
module} of type $L$.  In case $L$ is irreducible $M(L)$ is a Verma
modules as defined in \cite[\S 4.2]{BGK}; we consider the more
general situation of quasi-Verma modules in \S\ref{ss:verma} below.

Note that we have the natural inclusion $L \into M(L)$ allowing us
to view $L \sub M(L)$.  Also if $L'$ is a $U(\g_0,e)$-submodule of
$L$, then $M(L')$ is a $U(\g,e)$-submodule of $M(L)$; and we have an
isomorphism $M(L/L') \iso M(L)/M(L')$.  Thus we obtain the following
elementary lemma.

\begin{Lemma} \label{L:quasifilt}
Let $L_1,\dots,L_m$ be the composition factors of $L$. Then the
quasi-Verma module $M(L)$ has a filtration with quotients isomorphic
to $M(L_i)$ for $i = 1,\dots,m$.
\end{Lemma}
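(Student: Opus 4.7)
The plan is to derive the filtration by iterating the observations made just before the lemma statement. Since $L$ is finite-dimensional, I can choose a composition series
\[
0 = L^{(0)} \subset L^{(1)} \subset \cdots \subset L^{(m)} = L
\]
of $U(\g_0,e)$-modules with $L^{(i)}/L^{(i-1)} \iso L_i$ for $i=1,\dots,m$. The goal is to promote this to a filtration of $M(L)$ whose sections are the corresponding quasi-Verma modules.

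To do this, I apply $M(-)$ to the composition series. Using the first observation before the lemma, each inclusion $L^{(i-1)} \hookrightarrow L^{(i)}$ induces an inclusion $M(L^{(i-1)}) \hookrightarrow M(L^{(i)})$ of $U(\g,e)$-submodules of $M(L)$. Using the second observation, applied to the pair $L^{(i-1)} \subset L^{(i)}$, there is a natural isomorphism
\[
M(L^{(i)})/M(L^{(i-1)}) \iso M(L^{(i)}/L^{(i-1)}) \iso M(L_i).
\]
Stringing these together produces a filtration
\[
0 = M(L^{(0)}) \subset M(L^{(1)}) \subset \cdots \subset M(L^{(m)}) = M(L)
\]
of $M(L)$ by $U(\g,e)$-submodules whose successive quotients are exactly the $M(L_i)$, as required.

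There is essentially no obstacle here, since the two facts invoked are precisely the statements that the author recorded in the paragraph preceding Lemma~\ref{L:quasifilt}. The only subtle point is the second isomorphism $M(L^{(i)}/L^{(i-1)}) \iso M(L^{(i)})/M(L^{(i-1)})$, which rests on the fact that tensoring with $U(\g,e)/U(\g,e)_\sharp$ over $U(\g_0,e)$ sends the short exact sequence $0 \to L^{(i-1)} \to L^{(i)} \to L_i \to 0$ to a short exact sequence; in the excerpt this is taken as part of the setup of quasi-Verma modules, so no additional verification is needed in the proof itself.
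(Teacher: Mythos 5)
Your proof is correct and is precisely the argument the paper intends: the two observations recorded immediately before the lemma (that $L' \subset L$ induces $M(L') \subset M(L)$, and that $M(L/L') \iso M(L)/M(L')$) are iterated over a composition series of $L$. The paper states the lemma without an explicit proof, deeming it elementary from those observations, and your write-up supplies exactly that omitted argument.
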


\smallskip

Let $M$ be a $U(\g,e)$-module. Given a weight $\lambda \in (\t^e)^*$
we define the $\lambda$-weight space of $M$ to be $M_\lambda = \{m
\in M \mid tm = \lambda(t)m \text{ for all } t \in \t^e\}$. We note
that this labelling of weight spaces differs from that in in
\cite[\S4.2]{BGK}, where the labelling is shifted by
\begin{equation} \label{e:delta}
\delta = \sum_{\substack{1 \leq i \leq r \\
\beta_i|_{\t^e}\in \Phi^e_-\\ d_i \geq 2}}\beta_i +
{\textstyle\frac{1}{2}}\sum_{\substack{1 \leq i \leq r \\
\beta_i|_{\t^e} \in \Phi^e_-
\\
d_i =1}} \beta_i \in (\t^e)^*.
\end{equation}
This shift by $\delta$ is due to the fact that inclusion of $\t^e$
into $U(\g_0,e)$, and the embedding obtained as the composition
$\theta$ with the map in \eqref{e:hwiso} differ.

We say that a weight space $M_\lambda$ of $M$ is a {\em maximal
weight space} if $M_\mu = \{0\}$ for all $\mu > \lambda$.  In this
case, we have $U(\g,e)_\sharp M_\lambda = 0$, so that we obtain an
action of $U(\g_0,e) \iso U(\g,e)_0/U(\g,e)_{0,\sharp}$ on
$M_\lambda$. Suppose $M_\lambda$ is finite dimensional and consider
the induced module $M(L)$. The universal property for quasi-Verma
modules tells us that there is a unique homomorphism
\begin{equation} \label{e:univverma}
M(M_\lambda) \to M
\end{equation}
sending $L \sub M(L)$ identically to $L = M_\lambda \sub M$, see
\cite[Thm.\ 4.5(3)]{BGK}.

\smallskip

The main result in \cite{Go} is a compatibility result between Verma
modules and the loop filtration.  We require this result in
\S\ref{ss:verma}, so we recall it now.  There is a loop filtration
of $U(\g_0,e)$ such that the associated graded algebra $\gr'
U(\g_0,e)$ is naturally isomorphic to $U(\g_0^e)$.  Let $L$ be a
finite dimensional $U(\g_0,e)$-module.  We endow $L$ with the
trivial filtration concentrated in degree $0$, then $L$ is a
filtered module for $U(\g_0,e)$ and the associated graded module
$\gr' L$ is a $U(\g_0^e)$-module.  Note that $x \in \g_0^e(j)$ acts
as zero on $\gr' L$ for $j > 0$.  Thus $\gr' L$ is just the
restriction of $L$ to $U(\g_0^e(0))$.  For technical reasons,
explained in \cite{Go}, we have to consider the {\em shift}
$S_{\delta}(\gr' L)$ of $\gr' L$, where $\delta$ is as in
\eqref{e:delta}. We define $S_{\delta}(\gr' L)$ to be equal to $\gr'
L$ as a vector space, and the action of $x \in \g^e_0$ is given by
``$xv = S_\delta(x)v$'', where $S_\delta: U(\p_0) \to U(\p_0)$ is
defined in analogy to $S_{-\gamma}$ above; note that $\delta$ is a
character of $\p_0$ by \cite[Lem.\ 4.1]{BGK}.

The choice of positive roots $\Phi_+^e$ leads to the triangular
decomposition $\g^e = \g^e_- \oplus \g_0 \oplus \g_+$. Therefore, we
can define the {\em quasi-Verma module}
$$
M(S_{\delta}(\gr' L)) = U(\g^e) \otimes_{U(\g^e_0 \oplus \g^e_+)}
S_{\delta}(\gr' L)
$$
for $U(\g,e)$, where $S_{\delta}(\gr' L)$ is extended to a module
for $U(\g^e_0 \oplus \g^e_+)$ by letting $\g^e_+$ act trivially.
We can define a filtration on $M = M(L)$ as in \S\ref{ss:loop} with
$M_0 = L$, and the associated graded module $\gr' M(L)$ is a module
for $\gr' U(\g,e) \iso U(\g^e)$.
The main theorem of \cite{Go} says that we have an isomorphism
\begin{equation} \label{e:vermaiso}
\xi_L : \gr' M(L) \iso M(S_{\delta}(\gr' L)).
\end{equation}

\smallskip

We finish this section by giving a more explicit description of the
Verma module $M(L)$, which is needed make some identifications in
the proof of Theorem \ref{T:filtverma} below. Let $x_1,\dots,x_t$ is
a basis of $\g^e$ as at the end of \S\ref{ss:structure}.  So $x_i$
has $\t^e$-weight $\gamma_i \in \Phi^e$ for each $i=1,\dots,t$.  We
assume that the basis is ordered so that: $\gamma_1,\dots,\gamma_s
\in \Phi^e_- = -\Phi^e_+$; $\gamma_{s+1},\dots,\gamma_{s'} = 0$; and
$\gamma_{s'+i} = -\gamma_i$ for $i = 1,\dots,s$.  Recall the linear
map $\Theta$ from \eqref{e:Theta}.  Let $l_1,\dots,l_k$ be a basis
of $L$, which can also be viewed as a basis for $\gr' L$. Then the
PBW theorem for $U(\g,e)$ explained at the end of
\S\ref{ss:structure} implies that the vectors

\begin{equation} \label{e:basisverma}
\Theta(x_1)^{a_1}\dots\Theta(x_s)^{a_s} \otimes l_i
\end{equation}

for $a_1,\dots,a_s \in \Z_{\ge 0}$ and $i = 1,\dots,k$ form a basis
of $M(L)$, see also \cite[Thm.\ 4.5(1)]{BGK}.  The results in
\cite{Go}, tell us that
\begin{equation} \label{e:grvermabasis}
\xi_L(\gr' \Theta(x_1)^{a_1}\dots\Theta(x_s)^{a_s} \otimes l_i) =
x_1^{a_1}\dots x_s^{a_s} \otimes l_i
\end{equation}
where $\xi_L$ is as in \eqref{e:vermaiso}.

\subsection{The category $\cO(e)$} \label{ss:catO}

As in \cite[\S4.4]{BGK}, we define $\mathcal O(e) = \mathcal
O(e;\t,\q)$ to be the category of all (finitely generated)
$U(\g,e)$-modules $M$ such that:
\begin{enumerate}
\item[(i)] the action of $\t^e$ on $M$ is semisimple with
finite dimensional $\t^e$-weight spaces; and
\item[(ii)] the set $\{\lambda \in (\t^e)^* \mid M_\lambda \neq 0\}$
is contained in a finite union of sets of the form
$\{\nu\in(\t^e)^* \mid  \nu\leq\mu\}$ for $\mu \in (\t^e)^*$.
\end{enumerate}
As explained in \cite[\S4.4]{BGK} the category $\cO(e)$ depends (in
an essential way) on the choice of positive roots $\Phi^e_+$.

Let $M \in \cO(e)$ and let $V$ be a finite dimensional
$U(\g)$-module. In Proposition~\ref{P:transadm} below, we show that
$M \ostar V \in \cO(e)$; meaning that $\cO(e)$ is stable under
translations. It is a consequence of the following lemma regarding
the isomorphism $\chi_{M,V}$ from \S\ref{ss:lift}.  For the lemma we
note that there is an action of $\t^e$ on $M \ostar V$ from the
embedding $\t^e \into U(\g,e)$ and on $M \otimes V$ through the
embedding $\t^e \into U(\g,e)$ and the inclusion $\t^e \into U(\g)$.

\begin{Lemma}  \label{L:chiequ}
The isomorphism $\chi_{M,V} : M \ostar V \to M \otimes V$ is
$\t^e$-equivariant.
\end{Lemma}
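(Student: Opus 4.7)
The plan is to establish the $\t^e$-equivariance of $\chi_{M,V}$ by proving it for its inverse $\psi_{\bx^0,M,\bv}$ from Lemma~\ref{L:inverse}. With $\bv$ a $\t$-weight basis of $V$ and $\mu_j \in (\t^e)^*$ the $\t^e$-weight of $v_j$, a direct calculation of both sides of the equivariance identity on a spanning element $m \otimes v_j$ (using $tv_j = \mu_j(t)v_j$ and $(u\tilde\1)\cdot\theta(t) = (u\theta(t))\tilde\1$) cancels the diagonal $\mu_j(t)$-contributions and reduces the problem to showing
$$
[\theta(t), x^0_{ij}] \;=\; (\mu_j(t) - \mu_i(t))\, x^0_{ij} \qquad (t \in \t^e),
$$
i.e.\ that each entry $x^0_{ij}$ is a $\t^e$-weight vector of weight $\mu_j - \mu_i$ under the inner derivation $\operatorname{ad}\theta(t)$ on $U(\tilde\p)$.

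I would then exploit the uniqueness of $\bx^0$ in Lemma~\ref{L:inverse}. Let $\by^0$ be the matrix whose $(i,j)$-entry is the $(\mu_j - \mu_i)$-weight component of $x^0_{ij}$; it suffices to verify that $\by^0$ also satisfies conditions (i) and (ii) of that lemma, for then uniqueness forces $\by^0 = \bx^0$. To set this up, I would first choose the basis $x_1,\dots,x_r$ of $\r$ from after Lemma~\ref{L:free} to consist of $\t^e$-weight vectors (possible since $\t^e \sub \g(0)$ stabilizes $\r$); the resulting PBW basis of $\tilde Q$ as a free right $U(\g,e)$-module is then $\t^e$-weight-graded, and the projection $\chi$ preserves $\t^e$-weights. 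Condition (i) for $\by^0$ is then immediate since $\chi(x^0_{ij}) = \delta_{ij}$ lies in weight $0$. For (ii), one uses that $\Pr : U(\tilde\g) \onto U(\tilde\p)$ is $\t^e$-equivariant ($\tilde\p$ is a $\t^e$-submodule of $\tilde\g$), that $\tilde I$ is $\t^e$-stable (its generators $x - x^\ne - \chi(x)$ are weight vectors, since $\chi(x) = (e|x)$ vanishes unless $x$ is of weight $0$), and that $b_{ik}(x) \ne 0$ forces $(\text{weight of }x) + \mu_k = \mu_i$; projecting the weight-$(\nu + \mu_j - \mu_i)$ component of (ii) for $\bx^0$ (with $\nu$ the weight of $x$) then recovers exactly (ii) for $\by^0$.

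The main technical obstacle is verifying that the inner derivation $\operatorname{ad}\theta(t)$ on $U(\tilde\p)$ coincides with the natural extension to a derivation of the $\t^e$-action on $\tilde\p = \p \oplus \k^\ne$. On $\p$ this is automatic, since $\theta(t) - t \in U(\k^\ne)$ commutes with $\g$. On $\k^\ne$ the identity $[\theta(t), y^\ne] = [t,y]^\ne$ is a short direct computation from the formula \eqref{e:theta0} for $\theta$, the relation $[x^\ne, y^\ne] = \lan x|y\ran$, the Leibniz rule, and the $\t^e$-invariance of $\lan\cdot|\cdot\ran$. Once this identification is in place, all the weight-component manipulations above become routine.
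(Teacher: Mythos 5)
Your proof is correct, and it takes the route that the paper only sketches in the remark following Lemma~\ref{L:chiequ} rather than the route of the paper's actual proof. The paper's proof works at the group level: it passes to the torus $T^e$ with $\Lie T^e = \t^e$, cites \cite[Lem.~2.4]{Pr2} to identify the $\t^e$-action on $U(\g,e)$ with the differential of the $T^e$-action on $U(\tilde\p)$, defines a $T^e$-action on $\tilde Q\otimes V$, verifies the compatibility $x\cdot(t\cdot z)=t\cdot((t^{-1}\cdot x)\cdot z)$, and deduces that the matrix $\bigl(\alpha_i(t)^{-1}\alpha_j(t)\,(t\cdot x^0_{ij})\bigr)$ also satisfies (i) and (ii) of Lemma~\ref{L:inverse}, whence uniqueness finishes. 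You instead work entirely at the Lie-algebra level via $\ad\theta(t)$: you prove by hand the identity $[\theta(t),y^\ne]=[t,y]^\ne$ that the paper outsources to Premet, observe that the PBW decomposition of $\tilde Q$ over $U(\g,e)$ is $\t^e$-graded once the basis of $\r$ is taken to consist of weight vectors, and then apply uniqueness to the matrix of $(\mu_j-\mu_i)$-weight components of $\bx^0$. Both arguments ultimately reduce to the same point --- showing that a transformed version of $\bx^0$ satisfies (i) and (ii) and invoking uniqueness --- so the core idea is shared; the difference is group vs.\ Lie algebra and citation vs.\ explicit computation. Your route is more self-contained and avoids exponentiation, at the cost of verifying the $\ad\theta(t)$-compatibility and the $\t^e$-equivariance of $\chi$ and $\Pr$ directly; the paper's route is shorter given the reference to \cite{Pr2}. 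All the steps you describe check out, including the computation $[\theta(t),y^\ne]=[t,y]^\ne$ (using $\sum_i\lan z_i\,|\,y\ran z_i^*=y$ and the invariance of $\lan\cdot\,|\,\cdot\ran$), the $\t^e$-stability of $\tilde I$ (using that $\chi(x)=(e|x)$ vanishes on nonzero $\t^e$-weight vectors), and the weight bookkeeping in condition (ii).
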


\begin{proof}
Let $T$ be a maximal torus of $G$ with Lie algebra $\t$ and let
$T^e$ by the centralizer of $e$ in $T$, so $\Lie T^e = \t^e$. By
\cite[Lem.\ 2.4]{Pr2}, the adjoint action of $\t^e$ on $U(\g,e)$
coincides with the restriction of the differential of the adjoint
action of $T^e$ on $U(\tilde \p)$.  Let $\bv = (v_1,\dots,v_n)$ be
an ordered basis of $V$ as in \S\ref{ss:lift} and let $\alpha_i \in
(\t^e)^*$ be the $\t^e$-weight of $v_i$; we identify $\alpha_i$ with
the corresponding character of $T^e$.  We may exponentiate the
action of $\t^e$ on $V$ to get an action of $T^e$ on $V$; we write
$tv$ for the image of $v \in V$ under $t \in T^e$. Consider the lift
matrix $\bx^0$ from Lemma~\ref{L:inverse}. For $t \in T^e$, set $t
\cdot \bx^0 = (t \cdot x_{ij}^0)$, where $t \cdot x_{ij}^0$ is the
image of $x_{ij}^0$ under the adjoint action of $t$. Using
Lemma~\ref{L:inverse}, we see that the condition for $\chi_{M,V}$ to
be $\t^e$-equivariant is equivalent to the condition $t \cdot \bx^0
= (\alpha_i(t)^{-1}\alpha_j(t) x_{ij}^0)$ for all $t \in T^e$.

We define an action of $T^e$ on $\tilde Q \otimes V$ by $t \cdot (u
\tilde \1 \otimes v) = (t \cdot u)\tilde \1 \otimes tv$, for $t \in
T^e$, $u \in U(\tilde \p)$ and $v \in V$; this action can also be
seen by restricting the action of $U(\tilde \g)$ to $\g$, noting
that this action is locally finite so that we can exponentiate to an
action of $G$, and then restricting to $T^e$. Now it is a
straightforward calculation to check that
\begin{equation} \label{e:mess}
x \cdot (t \cdot (u \tilde \1 \otimes v)) = t \cdot ((t^{-1} \cdot
x) \cdot (u \tilde \1 \otimes v))
\end{equation}
for $t \in T^e$, $x \in \n$, $u \in U(\tilde \p)$ and $v \in V$.  It
follows from Lemma~\ref{L:inverse} that $\sum_{i=1}^n x_{ij} \otimes
v_i \in U(\g,e) \ostar V$.  Now using \eqref{e:mess}, we see that
$\sum_{i=1}^n t \cdot x_{ij} \otimes \alpha_i(t)v_i \in U(\g,e)
\ostar V$ for $t \in T^e$.  From this we see that the matrix
$(\alpha_i(t)^{-1}\alpha_j(t)(t \cdot x^0_{ij}))$ satisfies
conditions (i) and (ii) in Lemma~\ref{L:inverse}.  Now the
uniqueness statement in that lemma completes the proof.
\end{proof}

\begin{Remark}
One can give an alternative proof to Lemma~\ref{L:chiequ}, by
considering $\chi_{M,V} : (\tilde Q \otimes_{U(\g,e)} M) \otimes V
\to M \otimes V$ and showing that it is $\t^e$-equivariant directly,
where the action on the left-hand side is through the embedding
$\theta : \t^e \into U(\tilde \g)$.  This requires use of
Lemma~\ref{L:free}.
\end{Remark}

The above lemma means that the $\t^e$-weight spaces (shifted by
$\delta$) of $M \ostar V$ and $M \otimes V$ are identified via
$\chi_{M,V}$. It is clear that the weight spaces of $M \otimes V$
satisfy conditions (i) and (ii) in the definition of $\cO(e)$, from
which the proposition below follows.

\begin{Proposition} \label{P:transadm}
We have that $M \ostar V \in \cO(e)$, so $? \ostar V$ defines an exact
endofunctor of $\cO(e)$.
\end{Proposition}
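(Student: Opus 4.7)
The plan is to deduce the proposition almost directly from Lemma~\ref{L:chiequ} and the elementary behavior of the tensor product $M \otimes V$ under the diagonal $\t^e$-action.

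First I would verify conditions (i) and (ii) of the definition of $\cO(e)$ for $M \otimes V$ equipped with the diagonal action of $\t^e$ (coming from $\theta: \t^e \into U(\g,e)$ on the first factor and $\t^e \subseteq \t \subseteq \g$ on the second). The weight space decomposition takes the form
$$
(M \otimes V)_\lambda \;=\; \bigoplus_{\mu \in (\t^e)^*} M_{\lambda-\mu} \otimes V_\mu.
$$
Since $V$ is finite dimensional, $\t^e$ acts semisimply on $V$ with only finitely many weights $\eta_1,\dots,\eta_q$, and each $V_{\eta_j}$ is finite dimensional. Combined with the assumption $M \in \cO(e)$, this makes $(M \otimes V)_\lambda$ a finite direct sum of finite dimensional spaces, and forces the weight support of $M \otimes V$ to lie in $\bigcup_{i,j}\{\nu \in (\t^e)^* \mid \nu \leq \mu_i + \eta_j\}$, where $\{\mu_i\}$ bounds the support of $M$.

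Next I would apply Lemma~\ref{L:chiequ}: since $\chi_{M,V}: M \ostar V \to M \otimes V$ is a $\t^e$-equivariant linear isomorphism, both (i) and (ii) transfer verbatim from $M \otimes V$ to $M \ostar V$.

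For finite generation, fix a finite dimensional generating subspace $M_0 \subseteq M$ for the $U(\g,e)$-action, and equip $M$ with the induced loop filtration from $M_0$ as in \S\ref{ss:loop}. Then $\gr' M$ is generated by $\gr' M_0$ as a $\gr' U(\g,e) \iso U(\g^e)$-module, so Corollary~\ref{C:gen} produces a finite generating set $\psi_{\bx,M,\bv}(M_0 \otimes V) \subseteq M \ostar V$ for the $U(\g,e)$-action, proving that $M \ostar V$ is finitely generated. Finally, exactness of $? \ostar V$ is immediate from the definition: it is the composite of the quasi-inverse equivalences from Theorem~\ref{T:skryabin} with the exact functor $? \otimes V$ on $\Wh(\tilde \g, e)$, and all three factors are exact.

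Essentially no step is hard; the crux is Lemma~\ref{L:chiequ}, which is already in hand, and the only thing to be mildly careful about is that the $\t^e$-action on $M \otimes V$ really is the diagonal one so that the weight decomposition splits as a convolution—this is built into the definition of the action in the paragraph preceding Lemma~\ref{L:chiequ}.
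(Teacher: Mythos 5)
Your proof takes essentially the same approach as the paper's: both hinge on Lemma~\ref{L:chiequ} to transfer conditions (i) and (ii) from $M \otimes V$ (with the diagonal $\t^e$-action) to $M \ostar V$. You are more thorough than the paper's one-line remark in two respects: you explicitly verify finite generation of $M \ostar V$ via the loop filtration and Corollary~\ref{C:gen} (necessary, since $\cO(e)$ consists by definition of finitely generated modules), and you note exactness by factoring $? \ostar V$ through Skryabin's equivalence and $? \otimes V$ — both points the paper leaves implicit. The argument is correct.
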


\subsection{Translation of Verma modules} \label{ss:verma}

In this subsection we consider translations of quasi-Verma modules
for $U(\g,e)$. Throughout $L$ is a finite dimensional
$U(\g_0,e)$-module and $V$ is a finite dimensional $U(\g)$-module.
Our main result is Theorem~\ref{T:filtverma}, which says that a
translation of a quasi-Verma module is filtered by quasi-Verma
modules; a consequence is Corollary~\ref{C:filtverma} giving the
corresponding result for Verma modules.

Our first lemma of this subsection  considers the associated graded
side of translations of quasi-Verma modules.

\begin{Lemma} \label{L:grvermaisos}  There are isomorphisms
$$
\gr' (M(L) \ostar V) \iso \gr' M(L) \otimes V \iso M(S_\delta(\gr'
L)) \otimes V \iso M(S_\delta(\gr' L) \otimes V).
$$
So in particular
there is an isomorphism
\begin{equation} \label{e:sigma}
\sigma : \gr' (M(L) \ostar V) \isoto M(S_\delta(\gr' L) \otimes V)
\end{equation}
\end{Lemma}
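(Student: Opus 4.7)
My plan is to establish each of the three displayed arrows separately as isomorphisms of $U(\g^e)$-modules, and then compose them to define $\sigma$.

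The first isomorphism $\gr'(M(L) \ostar V) \iso \gr' M(L) \otimes V$ is a direct application of Proposition~\ref{P:loop} to $M = M(L)$, with the natural generating subspace taken to be $L \sub M(L)$. The second isomorphism $\gr' M(L) \otimes V \iso M(S_\delta(\gr' L)) \otimes V$ is obtained by tensoring the isomorphism $\xi_L$ of \eqref{e:vermaiso} with $\id_V$: this is $U(\g^e)$-equivariant because $\xi_L$ is, and on both sides the $U(\g^e)$-action is diagonal through the comultiplication of $U(\g^e)$ (with $V$ regarded as a $U(\g^e)$-module by restriction from $U(\g)$).

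The third isomorphism $M(S_\delta(\gr' L)) \otimes V \iso M(S_\delta(\gr' L) \otimes V)$ is the standard tensor identity for modules induced from a subalgebra. Writing $N = S_\delta(\gr' L)$ for brevity, I would define
$$
\phi : U(\g^e) \otimes_{U(\g^e_0 \oplus \g^e_+)} (N \otimes V) \to \bigl(U(\g^e) \otimes_{U(\g^e_0 \oplus \g^e_+)} N\bigr) \otimes V
$$
by $u \otimes (n \otimes v) \mapsto \sum u_{(1)} \otimes n \otimes u_{(2)} v$, using Sweedler notation for the comultiplication of $U(\g^e)$, where $N \otimes V$ carries the diagonal $U(\g^e_0 \oplus \g^e_+)$-action and $V$ is regarded as a $U(\g^e)$-module by restriction from $U(\g)$. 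That $\phi$ descends over $U(\g^e_0 \oplus \g^e_+)$ and is $U(\g^e)$-equivariant is immediate from the fact that the comultiplication is an algebra homomorphism. For bijectivity, invoke the PBW theorem: via the triangular decomposition $\g^e = \g^e_- \oplus \g^e_0 \oplus \g^e_+$, both sides are identified as vector spaces with $U(\g^e_-) \otimes N \otimes V$, and $\phi$ is filtered by $U(\g^e_-)$-degree with associated graded map equal to the identity. Composing the three isomorphisms yields $\sigma$; the main (mild) point requiring care is the third step, where one must carefully distinguish the induced and diagonal $U(\g^e)$-module structures that the tensor identity interchanges.
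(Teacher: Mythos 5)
Your argument is correct and follows essentially the same route as the paper: the first arrow from Proposition~\ref{P:loop}, the second by tensoring $\xi_L$ with $\id_V$, and the third via the tensor identity for parabolically induced $U(\g^e)$-modules — your explicit Sweedler map $u \otimes (n\otimes v) \mapsto \sum u_{(1)}\otimes n \otimes u_{(2)}v$ is exactly the Frobenius-reciprocity extension of the obvious $U(\g_0^e\oplus\g_+^e)$-map $n\otimes v \mapsto (1\otimes n)\otimes v$ that the paper uses. The paper leaves bijectivity as ``a standard argument'' referring to the basis in \eqref{e:grvermabasis}, which is the same PBW/filtration argument you spell out via the triangular decomposition of $\g^e$.
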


\begin{proof}
The first isomorphism is given by Proposition~\ref{P:loop}.  The
second is an immediate consequence of \eqref{e:vermaiso}.

The obvious homomorphism $S_\delta(\gr' L) \otimes V \to
M(S_\delta(\gr' L)) \otimes V$ of $U(\g_0^e \oplus \g_+^e)$-modules
extends to a homomorphism $M(S_\delta(\gr' L) \otimes V) \to
M(S_\delta(\gr' L)) \otimes V$. Now a standard argument, using the
basis of $M(S_\delta(\gr' L)) \otimes V$ consisting of elements of
the form in the right-hand side of \eqref{e:grvermabasis}, shows
that this homomorphism is in fact an isomorphism.  This gives the
third isomorphism in the statement of the lemma.
\end{proof}

We now state and prove the main theorem of this section.

\begin{Theorem} \label{T:filtverma}
Let $L$ be a finite dimensional $U(\g_0,e)$-module, and $V$ a finite
dimensional $U(\g)$-module. There is a filtration of $M(L) \ostar V$
by quasi-Verma modules $M(L_1),\dots,M(L_m)$, where $L_i$ is a
finite dimensional $U(\g_0,e)$-module for $i = 1,\dots,m$.
\end{Theorem}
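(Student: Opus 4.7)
Plan: Set $M := M(L)\ostar V$; by Proposition~\ref{P:transadm}, $M \in \cO(e)$, and by Lemma~\ref{L:grvermaisos} (via the isomorphism $\sigma$ of \eqref{e:sigma}), $\gr' M \iso M(N)$ as $U(\g^e)$-modules, where $N := S_\delta(\gr' L) \otimes V$ is finite dimensional. The idea is to peel off one quasi-Verma module at a time from the top of $M$, controlling each extraction with the loop filtration and $\sigma$. To make the induction run I would prove the following strengthening by induction on $\dim N'$: any $M' \in \cO(e)$ whose associated graded for the loop filtration is isomorphic to a quasi-Verma $U(\g^e)$-module $M(N')$, for some finite dimensional $U(\g_0^e \oplus \g_+^e)$-module $N'$, is filtered by quasi-Verma modules. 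The statement of the theorem is then the case $N' = N$, and $\dim N' = 0$ is trivial.

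For the inductive step, pick a maximal $\t^e$-weight $\lambda$ of $M'$ (which exists since $M' \in \cO(e)$) and set $L_1 := M'_\lambda$. Maximality gives $U(\g,e)_\sharp L_1 = 0$, so via \eqref{e:hwiso}, $L_1$ is a finite dimensional $U(\g_0,e)$-module. The universal property \eqref{e:univverma} yields a $U(\g,e)$-homomorphism $\phi : M(L_1) \to M'$ extending $L_1 \into M'$. The key claim is that $\phi$ is injective. To prove this, I would equip $M(L_1)$ with its canonical loop filtration (placing $L_1$ in degree zero) and check that $\phi$ is a filtered map. The induced map $\gr' \phi$ is then a homomorphism $M(S_\delta(L_1)) \to M(N')$ of $U(\g^e)$-modules, using \eqref{e:vermaiso} on the left. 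Tracing through $\sigma$, this map sends the generating subspace $S_\delta(L_1)$ into $N' \subseteq M(N')$, landing in the maximal weight subspace of $N'$ at the weight corresponding to $\lambda$. Both $M(S_\delta(L_1))$ and $M(N')$ are free $U(\g_-^e)$-modules on their respective tops by the PBW theorem, so injectivity on tops extends to injectivity of $\gr'\phi$, whence $\phi$ itself is injective by a standard filtration argument.

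Setting $F_1 := \phi(M(L_1)) \iso M(L_1)$, form the quotient $M'' := M'/F_1 \in \cO(e)$, which inherits a loop filtration. Exactness of $\gr'$ identifies $\gr' F_1$ with $M(S_\delta(L_1))$ as a $U(\g^e)$-submodule of $M(N')$, and freeness of $M(N')$ over $U(\g_-^e)$ yields $\gr' M'' \iso M(N'')$ where $N'' := N'/S_\delta(L_1)$ satisfies $\dim N'' < \dim N'$; here $S_\delta(L_1)$ is a $U(\g_0^e \oplus \g_+^e)$-submodule of $N'$ because the $\g_+^e$-action on a maximal weight subspace is forced to be zero. The induction hypothesis then provides a quasi-Verma filtration of $M''$, and pulling back to $M'$ and prepending $F_1$ gives the required filtration. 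The main obstacle is the injectivity of $\phi$: one must carefully set up the loop filtrations so that $\phi$ is filtered, identify $\gr'\phi$ on the top via $\sigma$, and then invoke freeness of the quasi-Verma $U(\g^e)$-modules over $U(\g_-^e)$. A secondary technical point is the identification of $\gr' M''$ with $M(N'')$, which similarly rests on this freeness.
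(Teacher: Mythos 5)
Your proof is correct and reaches the same conclusion, but it reorganizes the argument in a way that differs noticeably from the paper. The paper first reduces to the case where $\t^e$ acts on $L$ by a single weight $\lambda_0$, decomposes $V = \bigoplus_i V_{\lambda_i}$ by $\t^e$-weights ordered non-increasingly, and then runs \emph{two} parallel inductive constructions: one building a quasi-Verma filtration of $\gr'(M(L)\ostar V)$ by peeling off $L'_j = (M'_{j-1})_{\lambda_0+\lambda_j}$, and a second lifting this step-by-step to a filtration of $M(L)\ostar V$ itself, using at each stage that $\gr' g_j$ identifies with the already-constructed $g'_j$. Your proposal instead abstracts the essential invariant --- that $\gr'(M(L)\ostar V)$ is a quasi-Verma $U(\g^e)$-module $M(N)$ --- into a strengthened statement about arbitrary $M'\in\cO(e)$ with $\gr'M'\iso M(N')$, and proves it by a single induction on $\dim N'$. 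This buys a cleaner bookkeeping: you never need the explicit weight decomposition of $V$ or the lift matrix $\bx^0$ inside the induction, and the statement being proved at each stage is self-contained, whereas the paper has to carry along the inductive identifications $\gr' M_i \iso M'_i$ and $\gr' L_i \iso L'_i$. The technical heart is the same in both (injectivity of the universal map from a quasi-Verma, established on the $\gr'$ side via \eqref{e:vermaiso} and freeness over $U(\g_-^e)$; $\t^e$-compatibility of the loop filtration to locate $M'_\lambda$ in filtration degree $0$), and the one point you flag --- that $\gr'\phi$ injective forces $\phi$ to be strict, so the induced filtration on $F_1$ agrees with the canonical one and $\gr' M''\iso M(N'')$ --- is exactly the standard filtration lemma the paper also silently invokes when it identifies $\gr' M_i$ with $M'_i$. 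So: same ingredients, but your inductive packaging is genuinely tidier than the paper's two-track construction.
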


\begin{proof}
Note that $\t^e$ is in the centre of $U(\g_0,e)$ and $U(\g^e)$.
Therefore, when considering $M(L) \ostar V$ we can reduce to the
case where $\t^e \sub U(\g_0,e)$ acts on $L$ by a weight say
$\lambda_0 \in (\t^e)^*$.  This means that  $L \sub M(L)$ is equal
to $M(L)_{\lambda_0}$.

Let  $x_1,\dots,x_t$ be a basis of $\g^e$ as at the end of
\S\ref{ss:hwrec}; so we have bases of quasi-Verma modules as in
\eqref{e:basisverma}and \eqref{e:grvermabasis}. We decompose $V =
\bigoplus_{i=1}^m V_{\lambda_i}$ as a direct sum of $\t^e$-weight
spaces and assume that $\lambda_1,\dots,\lambda_m \in \Z \Phi^e$ are
ordered so that they are non-increasing with respect to the partial
order determined by $\Phi^e_+$.  Fix an ordered basis $\bv$ of $V$
consisting of $\t$-weight vectors; so each $V_\lambda$ is spanned by
a subset of the vectors in $\bv$. Let $\bx^0$ be the lift matrix
from Lemma \ref{L:inverse}. Then we have the vector space
isomorphism $\psi = \psi_{\bx^0,M(L),\bv} : M(L) \otimes V \isoto
M(L) \ostar V$, which is $\t^e$-equivariant by Lemma~\ref{L:chiequ}.

\smallskip

Before constructing the desired filtration of $M(L) \ostar V$, we
use the isomorphisms in the Lemma~\ref{L:grvermaisos} to construct a
filtration of $\gr'(M(L) \ostar V)$ by modules of the form $M(L'_i)
= U(\g^e) \otimes_{U(\g^e_0 \oplus \g^e_+)} L'_i$, where $L'_i$ is a
finite dimensional $U(\g_0^e)$-module.  Having done this, we
construct a corresponding filtration of $M(L) \ostar V$.

Set $M'_0 = \gr'(M(L) \ostar V)$ and consider the $\t^e$-weight
space of $M'_0$ of weight $\lambda_0 + \lambda_1$; we denote $L'_1 =
(M'_0)_{\lambda_0 + \lambda_1}$. This is a maximal weight subspace
of $\gr'(M(L) \ostar V)$, and we have $\sigma^{-1}(S_\delta(\gr'L)
\otimes V_{\lambda_1}) = L'_1$, where $\sigma : M'_0 \isoto
M(S_\delta(\gr' L) \otimes V)$ is given in \eqref{e:sigma}. Now by
the universal property of Verma modules for $U(\g^e)$, there is a
unique homomorphism $g'_1 : M(L'_1) \to M'_0$ sending $L'_1$
identically to itself.  It is clear that $g'_1$ is injective, so we
can identify $M(L'_1)$ with its image in $M'_0$ and define $M'_1 =
M'_0/M(L'_1)$.

Now suppose inductively that we have defined $M'_1,\dots,M'_{j-1}$,
where $M'_i = M'_{i-1}/M(L'_i)$ and $L'_i = (M'_{i-1})_{\lambda_0 +
\lambda_i}$, for $i = 1,\dots,j-1$.  We note that $M'_i$ is a
quotient of $M'_0$ for $i = 1,\dots,j-1$, and inductively that the
image of $\sigma^{-1}(S_\delta(\gr'L) \otimes V_{\lambda_i})$ in
$M'_i$ is equal to $L'_i$, for $i = 1,\dots,j-1$.

Consider $L'_j = (M'_{j-1})_{\lambda_0 + \lambda_j}$.  Using the
basis of $M'_0$ given by elements of the form in
\eqref{e:grvermabasis}, dimension counting of $\t^e$-weight spaces,
and that the weights $\lambda_1,\dots,\lambda_m$ are ordered so that
they are non-increasing, we see that $L'_j$ is a maximal weight
space of $M'_{j-1}$. Further, we see that $L'_j$ is the image of
$\sigma^{-1}(S_\delta(\gr'L) \otimes V_{\lambda_j})$ in $M'_{j-1}$.
We have a unique homomorphism $g'_j : M(L'_j) \to M'_{j-1}$ sending
$L'_j$ identically to itself. A standard PBW argument using the
basis elements of $M'_0$ in \eqref{e:grvermabasis}, shows that
$g'_j$ is injective. Thus, we can view $M(L'_j)$ as a submodule of
$M'_{j-1}$ and define $M'_j = M'_{j-1}/M(L'_j)$.

Thus, inductively we construct $M'_1,\dots,M'_m$.  Counting
dimensions of $\t^e$-weights spaces tells us that $g'_m : M(L'_m)
\to M'_{m-1}$ must be an isomorphism.  Hence, we have constructed a
filtration of $M'_0$ by the quasi-Verma modules $M(L'_i)$ for $i =
1,\dots,m$.

\smallskip

We move on to construct the desired filtration of $M_0 = M(L) \ostar
V$.  Inductively we construct  $M_1,\dots,M_m$ such that $M_i =
M_{i-1}/M(L_i)$ and $L_i = (M_{i-1})_{\lambda_0 + \lambda_i}$, for
$i = 1,\dots,m$. This is done so that we have canonical isomorphisms
$\gr' M_i \iso M'_i$ and $\gr' L_i \iso L'_i$ for $i = 1,\dots,m$.
The construction follows the same lines as above for $M'_0$. We
include the case $j=1$ explicitly, to demonstrate the idea, even
though this is not needed for the induction; this is also the case
for the above construction in $M'_0$.

Consider $L_1 = M(L)_{\lambda_0 + \lambda_1}$; this is equal to
$\psi(L \otimes V_{\lambda_1})$.  Then $L_1$ is a maximal weight
space of $M(L)$ and we clearly have $\gr' L_1 \iso L'_1$; for
example since the loop filtration is invariant under the adjoint
action of $\t^e$. Thus, by the universal property of Verma modules
for $U(\g,e)$, there is a homomorphism $g_1 : M(L_1) \to M_0$ as in
\eqref{e:univverma}. Thanks to \eqref{e:vermaiso} and
\eqref{e:grvermabasis}, we see that $\gr' g_1 : \gr' M(L_1) \to \gr'
M_0$ identifies with $g'_1 : M(L'_1) \to M'_0$.  Thus as $g'_1$ is
injective, a standard filtration argument tells us that $g_1$ is
injective.  We identify $M(L_1)$ with its image in $M_0$ and define
$M_1 = M_0/M(L_1)$. Then we have a canonical isomorphism $\gr' M_1
\iso M'_1$.

Now suppose inductively that we have defined $M_1,\dots,M_{j-1}$,
where $M_i = M_{i-1}/M(L_i)$ and $L_i = (M_{i-1})_{\lambda_0 +
\lambda_i}$, for $i = 1,\dots,j-1$.  Further, we assume inductively
that there are canonical isomorphisms $\gr' M_i \iso M'_i$ and $\gr'
L_i \iso L'_i$, for $i = 1,\dots,j-1$.

Consider $L_j = (M_{j-1})_{\lambda_0 + \lambda_j}$. Through the
isomorphism $\gr' M_{j-1} \iso M_{j-1}$, we get $\gr' L_j \iso L_j$
and thus that $L_j$ is a maximal weight space of $M_{j-1}$. We note
also that $L_j$ is the image of $\psi(L \otimes V_{\lambda_j})$ in
$M_{j-1}$. There is a homomorphism $g_j : M(L_j) \to M_{j-1}$ and,
similarly to the $j=0$ situation, we see that $\gr' g_j : \gr'
M(L_j) \to \gr' M_{j-1}$ identifies with $g'_j : M(L'_j) \to
M'_{j-1}$; here we use the identifications $\gr' M_{j-1} \iso
M'_{j-1}$ and $\gr' M(L_j) \iso M(L'_j)$ from \eqref{e:vermaiso} and
we require \eqref{e:grvermabasis} to see that these maps are equal
under these identifications. Thus, as $g'_j$ is injective, so is
$g_j$. Therefore, we may identify $M(L_j)$ with a submodule of
$M_{j-1}$ and set $M_j = M_{j-1}/M(L_j)$.

Hence, the induction is complete, and we have constructed a
filtration of $M(L) \ostar V$ by quasi-Verma modules.
\end{proof}

The following corollary is an immediate consequence of
Theorem~\ref{T:filtverma} and Lemma~\ref{L:quasifilt}.

\begin{Corollary} \label{C:filtverma}
Let $L$ be a finite dimensional irreducible $U(\g_0,e)$-module, and
$V$ a finite dimensional $U(\g)$-module. Then there is a filtration
of $M(L) \ostar V$ by Verma modules $M(L_1),\dots,M(L_{m'})$, where
$L_i$ is a finite dimensional irreducible $U(\g_0,e)$-module for $i
= 1,\dots,m'$.
\end{Corollary}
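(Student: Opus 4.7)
The plan is to combine Theorem~\ref{T:filtverma} with Lemma~\ref{L:quasifilt} in a straightforward way. First I would apply Theorem~\ref{T:filtverma} to obtain a filtration
$$
0 = N_0 \subseteq N_1 \subseteq \cdots \subseteq N_m = M(L) \ostar V
$$
with $N_i/N_{i-1} \iso M(L_i)$ for finite dimensional $U(\g_0,e)$-modules $L_1,\dots,L_m$. Note that the hypothesis that $L$ is irreducible is not required at this stage; the theorem produces a quasi-Verma filtration for any finite dimensional $L$.

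Next, for each $i$, I would refine the step $N_{i-1} \subseteq N_i$ using Lemma~\ref{L:quasifilt}. Since $L_i$ is finite dimensional, it admits a composition series with irreducible composition factors $L_{i,1},\dots,L_{i,k_i}$, each a finite dimensional irreducible $U(\g_0,e)$-module. Lemma~\ref{L:quasifilt} then provides a filtration of $M(L_i)$ whose successive quotients are the Verma modules $M(L_{i,j})$. Pulling this filtration back through the quotient $N_i \onto N_i/N_{i-1} \iso M(L_i)$ and inserting it between $N_{i-1}$ and $N_i$ refines the original filtration of $M(L) \ostar V$ into one whose successive quotients are the Verma modules $M(L_{i,j})$.

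Relabelling the resulting list of irreducible $U(\g_0,e)$-modules as $L_1,\dots,L_{m'}$, we obtain the desired filtration by Verma modules $M(L_1),\dots,M(L_{m'})$. There is no real obstacle here, since both Theorem~\ref{T:filtverma} and Lemma~\ref{L:quasifilt} do all the work; the only point to verify is that refining a filtration by inserting a finite filtration of each successive quotient produces a filtration of the whole module, which is elementary.
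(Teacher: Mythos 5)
Your proposal is correct and matches the paper's argument exactly: the paper states that Corollary~\ref{C:filtverma} is an immediate consequence of Theorem~\ref{T:filtverma} and Lemma~\ref{L:quasifilt}, and you have simply spelled out the routine refinement step that the paper leaves implicit.
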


\begin{Remark}
In future work, we hope to be more explicit regarding the
isomorphism type of factors that occur in the filtration from
Theorem~\ref{T:filtverma}.  In particular, this should be possible
in the case where $e$ is regular in $\g_0$.  In this case
$U(\g_0,e)$ is isomorphic to $Z(\g_0)$ (the centre of $U(\g_0)$), by
\cite[$\S$2]{Ko}, so we have an explicit parameterization of the
irreducible $U(\g_0,e)$-modules.

We note here that the proof of Theorem~\ref{T:filtverma} does give
the weight of $\t^e$ on each $L_i$ occurring in the filtration.
These are precisely the weights $\lambda_0 + \lambda_i$ for $i =
1,\dots,m$.  Further, an explicit description of the $L_i$ as
subquotients of $M(L) \ostar V$ is given via the map $\psi$.
Finally, we remark that considering the partial order on the weights
$\lambda_1,\dots,\lambda_m$ more carefully allows one to shorten the
filtration, so that the quotients are certain direct sums of the
$M(L_i)$.
\end{Remark}

\section{BRST definition} \label{S:brst}

In this section we define translation in terms of the definition of
the finite $W$-algebra via BRST cohomology.  This turns out to be
key for the proof of Theorem~\ref{T:transdual}.

\subsection{$W$-algebra} \label{ss:brstw}

We recall the definition of the finite $W$-algebra via BRST
cohomology.  The BRST definition is shown to be equivalent to the
Whittaker model definition in \cite{DDDHK}; below we recall some
results from \cite[\S2]{BGK} explaining its equivalence with the
definition of $U(\g,e)$.

We recall that $\tilde{\g} = \g \oplus \k^\ne$ and that $\n^{\ch}$
is a copy of $\n$.  The nonlinear Lie superalgebra $\hat \g$ is
defined to be
$$
\hat{\g} = \tilde{\g} \oplus \n^* \oplus \n^{\ch}
$$
with even part equal to $\tilde{\g}$ and odd part equal to $\n^*
\oplus \n^{\ch}$.  The non-linear Lie bracket $[\cdot,\cdot]$ on
$\hat \g$ is defined by: extending the bracket on $\tilde{\g}$;
declaring that the bracket is identically zero on $\n^*$, $\n^{\ch}$
and between elements of $\tilde \g$ and $\n^* \oplus \n^{\ch}$; and
setting $[f,x^{\ch}] = \lan f,x\ran$ for $f \in \n^*$ and $x \in
\n$, where $\lan f,x \ran$ denotes the natural pairing of $f \in
\n^*$ with $x \in \n$. We have the subalgebra
$$
\hat{\p} = \tilde{\p} \oplus \n^* \oplus \n^{\ch}
$$
of $\hat{\g}$.

Since $\tilde \g$ commutes with $\n^{\ch} \oplus \n^*$, we have the
tensor product decomposition
$$
U(\hat \g) = U(\tilde \g)\otimes U(\n^{\ch} \oplus \n^*).
$$
of $U(\hat \g)$ as a superalgebra.  The factor $U(\n^{\ch} \oplus
\n^*)$ is isomorphic to $\wedge (\n^{\ch}) \otimes \wedge(\n^*)$ as
a vector space with multiplication making it isomorphic to the
Clifford algebra on the space $\n \oplus \n^*$; here
$\wedge(\n^{\ch})$ and $\wedge(\n^*)$ denote the exterior algebras
of $\n^{\ch}$ and $\n^*$ respectively.

We put the {\em charge grading} on $\hat{\g}$, hence also on
$\hat{\p}$, consistent with the $\Z_2$-grading, by declaring that
elements of $\tilde{\g}$ are in degree $0$, elements of $\n^*$ are
in degree $1$, and elements of $\n^{\ch}$ are in degree $-1$. This
induces gradings
$$
U(\hat{\g}) = \bigoplus_{i \in \Z} U(\hat{\g})^i \quad \text{and}
\quad U(\hat \p) = \bigoplus_{i \in \Z} U(\hat{\p})^i.
$$
We note that the charge grading is called the cohomological grading
in \cite[\S2.3]{BGK}.

We recall that $b_1,\dots,b_r$ is a basis of $\n$ and
$f_1,\dots,f_r$ is the dual basis for $\n^*$. Let $d: U(\hat{\g})
\rightarrow U(\hat{\g})$ be the superderivation of charge degree $1$
defined by taking the supercommutator with the degree one element
$$
\delta = \sum_{i=1}^r f_i (b_i - \chi(b_i) - b_i^\ne) -
{\textstyle\frac{1}{2}} \sum_{i,j=1}^r f_i f_j [b_i, b_j]^{\ch}.
$$
One can check that the supercommutator $[\delta,\delta] = 0$, which
means that $d = \ad \delta$ satisfies $d^2 = 0$; we note here that
$[\delta,\delta] = 2\delta^2$, so that $\delta^2 = 0$. This can be
done by computing the action of $d$ on generators of $U(\hat \g)$
explicitly as in \cite{DDDHK}, see \cite[\S2.3]{BGK} for these in
the present notation.  Therefore, we can take the cohomology
$H^0(U(\hat \g),d)$ and we obtain an algebra; this is the {\em BRST
definition of the finite $W$-algebra}.

We now recall the relationship between $H^0(U(\hat \g),d)$ and
$U(\g,e)$ from \cite[\S2.3]{BGK}. First, we recall based on
\cite{DDDHK}, the quasi-isomorphism between $U(\hat \g)$ and the
standard complex $\tilde Q \otimes \wedge(\n^*)$ for computing the
$\n$-cohomology for the dot action of $\n$ on $\tilde Q$.  By the
PBW theorem for $U(\hat \g)$ we have $U(\hat \g) = (U(\tilde \p)
\otimes \wedge(\n^*)) \oplus \hat I$, where $\hat I$ is the left
ideal of $U(\hat \g)$ generated by $\tilde I \sub U(\tilde \g)$ and
$\n^{{\ch}}$.    The map
\begin{equation} \label{e:pqiso}
p : U(\hat \g) \to \tilde Q \otimes \wedge(\n^*)
\end{equation}
is defined to be the projection along this decomposition composed
with the isomorphism $U(\tilde \p) \isoto \tilde Q$. It is a
consequence of results in \cite{DDDHK} that $p$ is a
quasi-isomorphism of complexes.  We write $q$ for $p$ restricted to
$U(\hat \p)^0$ composed with the isomorphism $\tilde Q \iso U(\tilde
\p)$.

The definition of the function $\phi$ in the next lemma is based on
a construction of Arakawa \cite[\S4.8]{Ar} in the case that $e$ is
regular nilpotent.  The following lemma, which says that $\phi$ is a
right-inverse of $q$ is part of \cite[Lem.\ 2.6]{BGK}.

\begin{Lemma} \label{L:tidy}
There is a well-defined algebra homomorphism
\begin{equation} \label{e:phi}
\phi: U(\tilde{\p}) \hookrightarrow U(\hat{\p})^0 z
\end{equation}
such that
$$
\phi(x) = x + \sum_{i=1}^r f_i [b_i, x]^{\ch} \quad \text{ and } \quad
\phi(y^\ne) = y^\ne,
$$
for $x \in \p$ and $y \in \k$. Moreover, $q \phi =
\id_{U(\tilde{\p})}$;
\end{Lemma}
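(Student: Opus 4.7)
The plan is to define $\phi$ on the generators $\p$ and $\k^\ne$ of $U(\tilde \p)$ by the stated formulas, extend multiplicatively to the tensor algebra, and then verify: (a) the defining relations of $U(\tilde \p)$ are preserved, so that $\phi$ factors through $U(\tilde \p)$; (b) the image lies in $U(\hat \p)^0$; and (c) $q\phi = \id_{U(\tilde \p)}$. Claim (b) is immediate from the charge grading on $\hat \g$: $f_i \in \n^*$ has charge $+1$ and $[b_i,x]^{\ch} \in \n^{\ch}$ has charge $-1$, so each summand $f_i[b_i,x]^{\ch}$ has charge $0$, and $x$ and $y^\ne$ already have charge $0$.

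For (a), the defining relations of $U(\tilde \p)$ are $xy - yx = [x,y]$ for $x,y \in \p$, $xy^\ne - y^\ne x = 0$ for $x \in \p, y \in \k$, and $x^\ne y^\ne - y^\ne x^\ne = \lan x,y\ran$ for $x,y \in \k$. The last is immediate since $\phi$ is the identity on $\k^\ne$ and this relation already holds in $U(\hat \p)$. The second follows because $y^\ne \in \k^\ne \sub \tilde \g$ has trivial non-linear bracket in $\hat \g$ with every element of $\n^* \oplus \n^{\ch}$ (by construction) and with $x \in \p$, so $\phi(y^\ne) = y^\ne$ commutes with each summand of $\phi(x) = x + \sum_i f_i [b_i,x]^{\ch}$ in $U(\hat \p)$. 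The main case is the bracket relation for $x, y \in \p$; expanding $[\phi(x), \phi(y)]$ via the super-Leibniz rule and using the vanishing of brackets between $\tilde \g$ and $\n^* \oplus \n^{\ch}$ eliminates the mixed terms, reducing the computation to evaluating $\sum_{i,j}[f_i[b_i,x]^{\ch}, f_j[b_j,y]^{\ch}]$ via the pairing $[f_i, b_j^{\ch}] = \delta_{ij}$. After collecting terms and applying the Jacobi identity $[b_i, [x,y]] = [[b_i,x], y] + [x,[b_i,y]]$ in $\g$, the sum matches $\sum_k f_k[b_k, [x,y]]^{\ch}$, giving $[\phi(x), \phi(y)] = \phi([x,y])$.

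For (c), the essential observation is that $\hat I$ is a \emph{left} ideal of $U(\hat \g)$ generated by $\tilde I$ and $\n^{\ch}$, so any element of the form $A \cdot \xi$ with $A \in U(\hat \g)$ and $\xi \in \n^{\ch}$ automatically lies in $\hat I$. In particular $f_i [b_i,x]^{\ch} = f_i \cdot [b_i,x]^{\ch}$ is a left multiple of $[b_i,x]^{\ch} \in \n^{\ch}$, hence lies in $\hat I$, and so $p(f_i[b_i,x]^{\ch}) = 0$. Therefore $p(\phi(x)) = x \in U(\tilde \p) \otimes \wedge^0(\n^*)$ and hence $q(\phi(x)) = x$ under the identifications $p: U(\hat \p)^0 \to \tilde Q \otimes \wedge(\n^*)$ and $\tilde Q \iso U(\tilde \p)$. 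Similarly $\phi(y^\ne) = y^\ne$ already lies in $U(\tilde \p)$, so $q(\phi(y^\ne)) = y^\ne$.

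The main obstacle will be the bracket relation in (a) for $x,y \in \p$: even after structural simplifications using the pairing and the Jacobi identity in $\g$, the four-fold products of ghost variables require careful sign-tracking in the super-Leibniz rule to telescope into the clean target $\sum_k f_k [b_k, [x,y]]^{\ch}$; one must verify that the two kinds of contractions (those from pairing $f_i$ with $b_j^{\ch}$ and those from pairing $f_j$ with $b_i^{\ch}$) combine, via the antisymmetry $[b_i,b_j] = -[b_j,b_i]$ and the Jacobi identity, to cancel the extraneous terms and yield the required single-ghost expression.
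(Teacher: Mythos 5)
Parts (a) and (b) are sound, and the computation in (a) does close the way you predict: the mixed terms vanish because $\tilde\g$ commutes with $\n^* \oplus \n^{\ch}$ in $\hat\g$; the quartic ghost terms cancel against their $x \leftrightarrow y$ counterparts via the double sign change under swapping $i$ and $j$ (both $f_if_j$ and the pair of $\ch$-factors flip sign); and the surviving contraction terms reorganize, using $[[b_i,x],y] - [[b_i,y],x] = [b_i,[x,y]]$ together with the observation that the discarded nonnegative-degree part of $[b_i,x]$ contributes nothing in $\n$ after bracketing with $y \in \p$, into the required $\sum_k f_k[b_k,[x,y]]^{\ch}$.

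Part (c) as written has a genuine gap: you verify $q\phi = \id$ only on the generators $\p$ and $\k^\ne$. Since $\hat I$ is merely a left ideal, $p$, and hence $q$, is not an algebra homomorphism, so agreement with the identity on generators does not propagate to products. The repair is to apply your own key observation to arbitrary monomials: for $u = z_1\cdots z_m$ with $z_i \in \p \cup \k^\ne$, every cross term in the expansion of $\phi(z_1)\cdots\phi(z_m)$ has a rightmost ghost correction $\sum_j f_j[b_j,z_k]^{\ch}$, and because $\n^{\ch}$ commutes with $U(\tilde\p)$ inside $U(\hat\g)$ the factor $[b_j,z_k]^{\ch}$ slides past $z_{k+1}\cdots z_m$ to the far right, exhibiting the entire cross term as a left multiple of an element of $\n^{\ch}$, hence an element of $\hat I$. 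Therefore $\phi(u) - u \in \hat I$ for every monomial $u$, so $q\phi(u) = u$ on a basis, giving $q\phi = \id_{U(\tilde\p)}$ and the injectivity of $\phi$.
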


Next we recall \cite[Lem.\ 2.7]{BGK}.

\begin{Lemma}\label{L:mess}
For $u \in U(\tilde{\p})$, we have that
$$
d(\phi(u)) = \textstyle\sum_{i} f_i \phi(\Pr ([b_i - b_i^\ne, u]))
$$
\end{Lemma}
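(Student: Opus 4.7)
The plan is to prove the identity by induction on the PBW degree of $u \in U(\tilde\p)$, with base cases given by the two types of generators $u = y^\ne$ for $y \in \k$ and $u = x$ for $x \in \p$. The case $u = y^\ne$ is short: since $\phi(y^\ne) = y^\ne$ supercommutes with every summand of $\delta$ except the $f_i b_i^\ne$ piece, and $[b_i^\ne, y^\ne] = \lan b_i | y\ran$ is a scalar, a direct computation gives $d(y^\ne) = -\sum_i f_i \lan b_i | y\ran$. The right-hand side of the claimed identity evaluates to the same expression because $[b_i, y^\ne]=0$ and $\Pr$ and $\phi$ act trivially on scalars.

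For the base case $u = x \in \p$, one uses $\phi(x) = x + \sum_j f_j [b_j, x]^\ch$ and splits $\delta = \delta_1 + \delta_2$ with $\delta_1 = \sum_i f_i(b_i - \chi(b_i) - b_i^\ne)$ and $\delta_2 = -\tfrac{1}{2}\sum_{i,j} f_i f_j [b_i, b_j]^\ch$. Since $x$ commutes with all of $\delta_2$, one only has to compute $[\delta_1, x]$, $\sum_j [\delta_1, f_j[b_j, x]^\ch]$ and $\sum_j [\delta_2, f_j[b_j, x]^\ch]$ separately, using $[f_i, b_j^\ch] = \delta_{ij}$, the fact that $\n^*$ and $\n^\ch$ each have zero bracket on themselves, and the Jacobi identity in $\g$. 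The three contributions assemble into $\sum_i f_i \phi(\Pr([b_i, x]))$, which matches the right-hand side because $[b_i^\ne, x] = 0$. This is the technical heart of the argument; the quadratic Clifford term $\delta_2$ appears in $\delta$ precisely so that the parts of $[b_i, x]$ lying in the non-negatively graded piece of $\g$ are correctly absorbed through $\phi$.

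For the inductive step, since $d$ is an odd super-derivation and $\phi$ is an algebra homomorphism,
\begin{equation*}
d(\phi(uv)) = d(\phi(u))\phi(v) + \phi(u)d(\phi(v)).
\end{equation*}
Substituting the inductive hypothesis and comparing with the target right-hand side $\sum_i f_i \phi(\Pr([b_i - b_i^\ne, u]v)) + \sum_i f_i \phi(\Pr(u[b_i - b_i^\ne, v]))$, two discrepancies appear. First, $\Pr$ fails to commute with right multiplication by $U(\tilde\p)$, because $\tilde I$ is only a left ideal: the difference $\Pr(aw) - \Pr(a)w$ produces extra dot-action terms when one commutes the generators of $\tilde I$ across $w$. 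Second, moving $f_i$ past $\phi(u)$ via the super-bracket introduces the correction $[\phi(u), f_i]$, which is non-zero because of the Clifford summand $\sum_j f_j[b_j, \cdot]^\ch$ in $\phi$. The main obstacle is to show that these two sources of correction cancel exactly, and this reduces to a bookkeeping computation in the Clifford algebra $U(\n^\ch \oplus \n^*) \iso \wedge(\n^\ch) \otimes \wedge(\n^*)$; the whole point of the quadratic term $\delta_2$ in $\delta$ is to force this cancellation.
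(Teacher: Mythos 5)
The paper itself does not prove Lemma~\ref{L:mess}; the statement is quoted verbatim from \cite[Lem.\ 2.7]{BGK} where it is established by a direct computation, so there is no in-paper proof to compare against.  Your inductive outline is sound in its broad strokes and the cancellations you posit do in fact occur, but the write-up needs to be tightened in a few places before it is a proof, and one of your closing remarks is wrong.

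First, fix the shape of the induction.  By linearity reduce to PBW monomials and induct on length, writing a monomial of length $k$ as $u = a v$ with $a$ a single generator ($a = x \in \p$ or $a = y^\ne$) and $v$ of length $k-1$.  Setting $R(w) = \sum_i f_i\phi(\Pr([b_i - b_i^\ne,w]))$, the superderivation property of $d$ and multiplicativity of $\phi$ reduce the inductive step to the single identity $R(av) = R(a)\phi(v) + \phi(a)R(v)$, and this only needs to be verified for $a$ a generator.  As written, your inductive step appears to invoke this identity for arbitrary $u$, which you have not justified; restricting to generators is the key simplification.

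Second, here is why the two discrepancies cancel when $a$ is a generator.  Since $\tilde I$ is a left ideal one has $\Pr(wc) = w\Pr(c)$ for $w \in U(\tilde\p)$, so $\Pr(a[b_i-b_i^\ne,v]) = a\Pr([b_i-b_i^\ne,v])$ and that term causes no trouble; only $\Pr([b_i-b_i^\ne,a]v) - \Pr([b_i-b_i^\ne,a])v$ does.  For $a = y^\ne$ this vanishes because $[b_i-b_i^\ne,y^\ne]$ is a scalar, and $[\phi(y^\ne),f_j]=0$, so both corrections are zero.  For $a = x \in \p$, writing $c_i = [b_i,x](<0) \in \n$, one computes $\Pr([b_i,x]v) - \Pr([b_i,x])v = \Pr([c_i - c_i^\ne,v])$, while $[\phi(x),f_j] = \sum_k f_j([b_k,x])f_k$, so
\begin{equation*}
\sum_j [\phi(x),f_j]\,\phi(\Pr([b_j-b_j^\ne,v])) = \sum_k f_k\,\phi\bigl(\Pr([c_k - c_k^\ne,v])\bigr),
\end{equation*}
and the two corrections match.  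Notice that $\delta_2$ is nowhere to be seen here — the identity $R(av) = R(a)\phi(v) + \phi(a)R(v)$ involves only $\Pr$ and $\phi$, not $\delta$.  Your final sentence, attributing this cancellation to $\delta_2$, is therefore a misdiagnosis.  The quadratic term $\delta_2$ is used exactly once, in the base case $d(\phi(x)) = R(x)$: there the contribution $[\delta_2,\phi(x)]$, after applying the Jacobi identity $[b_k,[b_j,x]]-[b_j,[b_k,x]]=[[b_k,b_j],x]$, supplies precisely the piece $\sum_{j,k}f_jf_k[b_k,p_j]^\ch$ (where $p_j = [b_j,x]-[b_j,x](<0) \in \p$) needed to reassemble $\sum_i f_i\phi(\Pr([b_i,x]))$.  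Both the base case and the cancellation identity above are genuinely nontrivial and constitute the entire content of the lemma; they must be written out in full, not merely asserted.
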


The above lemma is important for proving \cite[Thm.\ 2.8]{BGK}, which says that
$U(\g,e)$ is isomorphic to a certain subalgebra of $U(\hat \g)^0$ isomorphic
to $H^0(U(\hat \g),d)$; we recall this below.

\begin{Theorem}\label{T:tidish}
We have that
$$
U(\g,e) = \{u \in U(\tilde{\p}) \mid d(\phi(u)) = 0\}.
$$
Moreover, we have that $\ker d = \phi(U(\g,e)) \oplus \im\ d$.
\end{Theorem}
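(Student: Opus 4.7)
The plan is to exploit the quasi-isomorphism $p : U(\hat\g) \to \tilde Q \otimes \wedge(\n^*)$ of \eqref{e:pqiso}, which transports the problem to the Chevalley--Eilenberg complex computing the $\n$-cohomology for the dot action on $\tilde Q$. Under the identification $\tilde Q \iso U(\tilde\p)$, the $0$th cohomology of this target complex is $U(\tilde\p)^\n = U(\g,e)$, and all higher cohomology vanishes by Corollary~\ref{C:vanish}. I write $\bar d$ for the differential of the target complex.

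For the first equality, the implication $u \in U(\g,e) \Rightarrow d(\phi(u)) = 0$ follows at once from Lemma~\ref{L:mess}, because every coefficient $\Pr([b_i - b_i^\ne, u])$ is zero. For the converse, assume $d(\phi(u)) = 0$. Applying the chain map $p$ gives $\bar d(p(\phi(u))) = p(d(\phi(u))) = 0$; and since $\phi(u) \in U(\hat\p)^0$, the identity $q\phi = \id$ from Lemma~\ref{L:tidy} yields $p(\phi(u)) = u$ in $\tilde Q \iso U(\tilde\p)$. Hence $\bar d(u) = 0$, which says exactly that $u \in H^0(\n, \tilde Q) = U(\g,e)$.

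For the decomposition $\ker d = \phi(U(\g,e)) \oplus \im d$, I will proceed charge degree by charge degree, using $H^i(U(\hat\g), d) = 0$ for $i \ne 0$ and $H^0(U(\hat\g), d) \iso U(\g,e)$ via the map induced by $p$. In each charge degree $i \ne 0$ this yields $\ker d|_i = \im d|_{i-1}$, which is contained in $\im d$. In charge degree $0$, for the trivial intersection I note that $U(\hat\g)^{-1} \sub \hat I$ (any PBW monomial of negative charge can be rewritten so that some antighost $b_j^\ch$ appears as a left factor), so $p$ vanishes on $U(\hat\g)^{-1}$; thus an equality $\phi(u) = d(w)$ with $w \in U(\hat\g)^{-1}$ gives $u = p(\phi(u)) = \bar d(p(w)) = 0$. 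To exhaust $\ker d|_0$, given $z$ I set $u := p(z)$; then $\bar d(u) = p(d(z)) = 0$ places $u$ in $U(\g,e)$, and $p(z - \phi(u)) = 0$. Since the map induced by $p$ on $H^0$ is injective, this forces $z - \phi(u) \in \im d|_{-1} \sub \im d$.

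The main technical point is the observation $U(\hat\g)^{-1} \sub \hat I$, which reduces the trivial-intersection step to the previously established identity $p\phi = \id_{U(\tilde\p)}$. Once this PBW bookkeeping is recorded, the rest of the argument is a formal consequence of the quasi-isomorphism property of $p$ together with Lemmas~\ref{L:tidy} and~\ref{L:mess}.
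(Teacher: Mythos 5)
Your proposal is correct and follows essentially the same route as the source (BGK, Thm.~2.8, which the paper merely recalls): both parts rest on $p$ being a quasi-isomorphism, the identity $q\phi = \id_{U(\tilde\p)}$ from Lemma~\ref{L:tidy}, the formula in Lemma~\ref{L:mess}, and the vanishing of cohomology away from charge degree $0$, which you correctly trace back to Corollary~\ref{C:vanish}. One small slip worth fixing: in the trivial-intersection step you say a negative-charge PBW monomial can be rewritten with $b_j^{\ch}$ as a \emph{left} factor, but since $\hat I$ is a \emph{left} ideal what you need is $b_j^{\ch}$ as a \emph{right} factor (which is achievable, since $\n^{\ch}$ commutes with $\tilde\g$ and supercommutes with $\n^*$ modulo scalars of the same negative charge); alternatively, and more cleanly, note that $p$ is a charge-graded map whose target $\tilde Q\otimes\wedge(\n^*)$ is concentrated in non-negative degrees, so $p$ automatically vanishes on $U(\hat\g)^{-1}$.
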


\subsection{Translation} \label{ss:brsttrans}

In this subsection, we define translation of $U(\g,e)$-modules in
the BRST definition using the map $\phi$ from \eqref{e:phi}, see
Definition~\ref{D:BRSTtrans} below.  Before giving this definition
we need to recall and introduce some terminology.  When we speak of
graded $U(\hat \g)$-modules below, we always means with respect to
the charge grading.  Throughout this subsection, $M$ is a
$U(\g,e)$-module and $V$ is a finite dimensional $U(\g)$-module.

We recall that a {\em differential graded module} for $U(\hat \g)$
is a graded $U(\hat \g)$-module $N = \bigoplus_{j \in \Z} N^j$ for
$U(\hat \g)$ with a {\em differential} $d_N : N \to N$ such that,
$d_N : N^j \to N^{j+1}$ for each $j$, $d_N^2 = 0$ and
$$
d_N(un) = d(u)n + (-1)^{p(u)}ud_N(n)
$$
for all homogeneous $u \in U(\hat \g)$ and $n \in N$.  For a
differential graded module $N$, each of the cohomology groups
$H^i(N,d_N)$ is a module for $H^0(U(\hat \g),d)$ with the obvious
action.

\begin{Example}
Given a graded $U(\hat \g)$-module $N$. One can define a
differential $d_N : N \to N$ by $d_N(n) = \delta n$: one requires
the fact that $\delta^2 = 0$ in $U(\hat \g)$.
However, this is not the differential
that we shall use for the modules that we consider below.
\end{Example}

We define a ``comultiplication'' $\hat \Delta : U(\hat \g) \to
U(\hat \g) \otimes U(\g)$ by extending $\tilde \Delta$ from
\S\ref{ss:nllatrans} and setting:
$$
\hat \Delta(f)  = f \otimes 1, \quad \text{ and }
\quad \hat \Delta(x^{{\ch}}) =
x^{{\ch}} \otimes 1,
$$
for $f \in \n^*$ and $x \in \n$. It is trivial to verify that $\hat
\Delta$ is a superalgebra homomorphism. Thus given a $U(\hat
\g)$-module $N$, we can define the structure of a $U(\hat
\g)$-module on $N \otimes V$.

We can consider $U(\g,e)$ as a subalgebra of $U(\hat \g)$ through
the injective homomorphism $\phi : U(\tilde \p) \to U(\hat \g)$ from
Lemma~\ref{L:tidy}. Therefore, we may define the induced module
$$
\hat M = U(\hat \g) \otimes_{U(\g,e)} M.
$$
The tensor product $\hat M \otimes V$ is then a graded $U(\hat
\g)$-module, where the grading is defined by declaring that $M$ and
$V$ have degree $0$. We can define a right action of $\delta$ on
$\hat M \otimes V$ by $(u \otimes m \otimes v ) \delta = u\delta
\otimes m \otimes v$; this is well defined since $\phi(U(\g,e)) \sub
\ker d$.  It is straightforward to check that this right action of
$\delta$ commutes with the left action of $U(\hat \g)$. Thus, we
define $d_{M,V}: \hat M \otimes V \to \hat M \otimes V$ by taking
the supercommutator with $\delta$:
$$
d_{M,V}(u \otimes m \otimes v) = \delta (u \otimes m \otimes v) -
(-1)^{p(u)}(u \otimes m \otimes v) \delta.
$$
An elementary commutator calculation gives
$$
d_{M,V}(a  (u \otimes m \otimes v)) = d(a) (u \otimes m \otimes v) +
(-1)^{p(a)} a \cdot d_{M,V}(u \otimes m \otimes v),
$$
for $a, u \in U(\hat \g)$, $m \in M$ and $v \in V$. To see that
$d_{M,V}^2 = 0$, one uses the fact that $\delta^2 = 0$ in $U(\hat
\g)$.  Therefore, $\hat M \otimes V$ is a differential graded
$U(\hat \g)$-module, which leads to our definition of translation in
this setting.

\begin{Definition} \label{D:BRSTtrans}
The {\em BRST definition of the translation of $M$ by $V$} is
$$
H^0(\hat M \otimes V,d_{M,V}).
$$
We view this as a $U(\g,e)$-module through the map $\phi$ from
Lemma~\ref{L:tidy}.
\end{Definition}

We would like a more explicit description of the action of
$d_{M,V}$, which is given by the following lemma.

\begin{Lemma} \label{L:d_V}
The action of $d_{M,V}$ is given by:
$$
d_{M,V}(u \otimes m \otimes v) = d(u) \otimes m \otimes v +
\sum_{i=1}^r f_i u \otimes m \otimes b_i v,
$$
for $u \in U(\hat \g)$, $m \in M$ and $v \in V$.
\end{Lemma}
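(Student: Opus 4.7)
The plan is to compute $\hat{\Delta}(\delta)$ explicitly, apply the resulting element to $u\otimes m\otimes v$ via the $\hat{\Delta}$-induced $U(\hat{\g})$-module structure on $\hat{M}\otimes V$, and then subtract the right-action term to recognize the super-commutator $[\delta,u] = d(u)$.

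First I would unpack the definition of $\hat{\Delta}$ on each summand of
$$\delta = \sum_{i=1}^r f_i(b_i - \chi(b_i) - b_i^{\ne}) - \tfrac{1}{2}\sum_{i,j=1}^r f_i f_j [b_i,b_j]^{\ch}.$$
The crucial observation is that $\hat{\Delta}$ acts as $x\mapsto x\otimes 1$ on $f_i$, on $b_i^{\ne}$, and on $[b_i,b_j]^{\ch}$ (the latter two since $\n^{\ne}$ and $\n^{\ch}$ lie in the kernel of the second-slot part of $\hat{\Delta}$), while on $b_i\in\g$ we have the genuine coproduct $\hat{\Delta}(b_i) = b_i\otimes 1 + 1\otimes b_i$. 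Since $\chi(b_i)$ is a scalar and the quadratic term $f_i f_j [b_i,b_j]^{\ch}$ involves three elements each primitive-on-the-left, multiplicativity of $\hat{\Delta}$ yields
$$\hat{\Delta}(\delta) = \delta\otimes 1 + \sum_{i=1}^r f_i \otimes b_i.$$

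Next, by the definition of the $U(\hat{\g})$-action on $\hat{M}\otimes V$ through $\hat{\Delta}$, and remembering that $\hat{\g}$-elements act on the left factor by multiplication in $U(\hat{\g})$ on $u$ (with $M$ absorbed via $\otimes_{U(\g,e)}$), I compute
$$\delta\cdot (u\otimes m\otimes v) = \delta u\otimes m\otimes v + \sum_{i=1}^r f_i u\otimes m\otimes b_i v.$$
The right-action term unravels as $(-1)^{p(u)}(u\otimes m\otimes v)\delta = (-1)^{p(u)} u\delta\otimes m\otimes v$ by the definition given just before the lemma. Subtracting and using that $\delta$ is odd, so that $d(u) = [\delta,u] = \delta u - (-1)^{p(u)}u\delta$, I obtain
$$d_{M,V}(u\otimes m\otimes v) = \big(\delta u - (-1)^{p(u)}u\delta\big)\otimes m\otimes v + \sum_{i=1}^r f_i u\otimes m\otimes b_i v = d(u)\otimes m\otimes v + \sum_{i=1}^r f_i u\otimes m\otimes b_i v,$$
which is the claimed formula.

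The only step requiring genuine care is verifying $\hat{\Delta}(\delta) = \delta\otimes 1 + \sum_i f_i\otimes b_i$; once this is in hand, the rest is a mechanical application of definitions. There is no real obstacle beyond bookkeeping of parities (to confirm the sign in $[\delta,u]$) and confirming that the scalar summands $\chi(b_i)f_i$ and the cubic summand in $\delta$ contribute only to the $\delta\otimes 1$ piece of $\hat{\Delta}(\delta)$.
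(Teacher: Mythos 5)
Your proof is correct and takes essentially the same approach as the paper: the paper records the identity $\delta \cdot (u \otimes m \otimes v) = \delta u \otimes m \otimes v + \sum_{i=1}^r f_i u \otimes m \otimes b_i v$ as the key step and says it follows ``easily from the definitions''; you have simply spelled out that routine verification via the explicit computation of $\hat\Delta(\delta)$, and the rest is the same subtraction of the right action of $\delta$.
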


\begin{proof}
This follows from the fact that
$$
\delta \cdot (u \otimes m \otimes v) = \delta u \otimes m \otimes v
+ \sum_{i=1}^r f_i u \otimes m \otimes b_i v,
$$
which is easily seen from the definitions.
\end{proof}

Next we relate this BRST definition of translation to that given in
Definition~\ref{D:nllatrans}.  To abbreviate notation we write
$$
\tilde M = \tilde Q \otimes_{U(\g,e)} M.
$$
We show that the $U(\g,e)$-modules $H^0(\hat M \otimes V,
d_{M,V})$ and $M \ostar V = H^0(\n,\tilde M \otimes V)$ are isomorphic
by arguing that $\hat M \otimes V$ is quasi-isomorphic to the
standard complex $\tilde M \otimes V \otimes \wedge(\n^*)$ for
calculating $\n$-cohomology for the dot action.

We define
$$
p_{M,V}: \hat M \otimes V \to \tilde M \otimes V \otimes
\wedge(\n^*)
$$
by
$$
p_{M,V}(u \otimes m \otimes v) = p(u)\tilde \1 \otimes m \otimes v,
$$
for $u \in U(\hat \g)$, $m \in M$ and $v \in V$, where $p$ is as in
\eqref{e:pqiso}. Now emulating the arguments in \cite{DDDHK}, one
can show that $p_{M,V}$ is a quasi-isomorphism. For this we require
that $H^i(\n,\tilde M \otimes V) = 0$ for $i \neq 0$, which is a
consequence of Corollary~\ref{C:vanish}. Thus we obtain the desired
result.

\begin{Proposition} \label{P:brstequiv}
We have that $H^0(\hat M \otimes V, d_{M,V}) \iso H^0(\n,\tilde M
\otimes V)$.
\end{Proposition}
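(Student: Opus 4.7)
The plan is to prove that the map $p_{M,V}$ described just before the proposition is a quasi-isomorphism of cochain complexes, so that the $i=0$ component of the induced isomorphism on cohomology gives the desired identification. The target complex is $\tilde M \otimes V \otimes \wedge(\n^*)$ equipped with the standard Chevalley--Eilenberg differential $d_{\mathrm{CE}}$ computing $H^*(\n, \tilde M \otimes V)$ with respect to the dot action of $\n$. The whole argument is modelled on the analogous quasi-isomorphism $p$ recalled in \eqref{e:pqiso}, which is proved in \cite{DDDHK} and used in the proof of \cite[Thm.\ 2.8]{BGK}.

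First I would verify that $p_{M,V}$ is a chain map, i.e.\ that $p_{M,V} \circ d_{M,V} = d_{\mathrm{CE}} \circ p_{M,V}$. Using the explicit formula from Lemma~\ref{L:d_V}, the summand $d(u) \otimes m \otimes v$ of $d_{M,V}$ maps under $p_{M,V}$ to the part of $d_{\mathrm{CE}}$ coming from the dot action of $\n$ on the factor $\tilde M$; this is exactly the content of the fact that $p$ is a chain map, modulo compatibility with the right $U(\g,e)$-action through $\phi$, which is guaranteed by $\phi(U(\g,e)) \subseteq \ker d$ together with $q\phi = \id_{U(\tilde \p)}$ from Lemma~\ref{L:tidy}. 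The additional summands $\sum_i f_i u \otimes m \otimes b_i v$ were chosen precisely to produce the remaining part of $d_{\mathrm{CE}}$ corresponding to the dot action of $\n$ on the factor $V$ via \eqref{e:dotnlla}.

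Second I would show that $p_{M,V}$ is a quasi-isomorphism. Following \cite{DDDHK}, the strategy is to introduce a filtration on both sides (for instance by charge plus the number of $\n^\ch$ generators) and compare the associated spectral sequences. On the source side, the PBW decomposition $U(\hat \g) = (U(\tilde \p) \otimes \wedge(\n^*)) \oplus \hat I$ used to define $p$ extends, after tensoring on the right with $M$ over $U(\g,e)$ via $\phi$ and on the right with $V$ over $\C$ via $\hat \Delta$, to a Koszul-type resolution whose associated graded collapses onto $\tilde M \otimes V \otimes \wedge(\n^*)$. The key input for collapse of the spectral sequence is the observation that $\tilde M \otimes V \in \Wh(\tilde \g, e)$ (since $V$ is finite-dimensional and the dot action on $\tilde M$ is locally nilpotent, so it remains locally nilpotent on the tensor product), which by Corollary~\ref{C:vanish} gives $H^i(\n, \tilde M \otimes V) = 0$ for $i > 0$.

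The main obstacle will be the bookkeeping of these filtrations and verifying that the classical quasi-isomorphism $p$ of \cite{DDDHK} transfers cleanly to the twisted and enlarged setting, particularly ensuring that the tensor factor $V$ (with its nontrivial $\n$-action built into $d_{M,V}$ by Lemma~\ref{L:d_V}) does not obstruct collapse. Once this is carried out, $p_{M,V}$ induces an isomorphism on $H^0$, and by definition $H^0(\n, \tilde M \otimes V)$ is $M \ostar V$, completing the proof. Compatibility of the $U(\g,e)$-module structures on both sides (the one on the left via $\phi$, the one on the right via $U(\g,e)$ acting on $M$ inside $\tilde M$) again follows from $q\phi = \id_{U(\tilde \p)}$.
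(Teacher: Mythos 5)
Your proposal matches the paper's own argument: both define $p_{M,V}$ as the extension of $p$ from \eqref{e:pqiso} with trivial action on the $M$ and $V$ tensor factors, invoke the quasi-isomorphism arguments of \cite{DDDHK}, and supply the needed vanishing $H^i(\n,\tilde M\otimes V)=0$ for $i>0$ via Corollary~\ref{C:vanish} applied to $\tilde M\otimes V\in\Wh(\tilde\g,e)$. You flesh out the chain-map verification and the spectral-sequence bookkeeping that the paper leaves implicit under "emulating the arguments," but the route and the key lemma are the same.
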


We would like to have a ``quasi-inverse'' to $p_{M,V}$.  We define
$$
\phi_{M,V}: \tilde M \otimes V \to \hat M^0 \otimes V
$$
by
$$\phi_{M,V}(u\tilde \1 \otimes m \otimes v) = \phi(u) \otimes m
\otimes v,
$$
for $u \in U(\tilde \p)$, $m \in M$ and $v \in V$. We have $p_{M,V}
\phi_{M,V} = \id_{\tilde M \otimes V}$ so $\phi_{M,V}$ is injective.

The following lemma says that the maps $\phi$ and $\phi_{M,V}$
preserve module structure.

\begin{Lemma}
$$
\phi_{M,V}(a(u\tilde \1 \otimes m \otimes v)) =
\phi(a) \phi_{M,V}(u\tilde \1 \otimes m \otimes v),
$$
for $a, u \in U(\tilde \p)$, $m \in M$ and $v \in V$.
\end{Lemma}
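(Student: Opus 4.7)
The plan is to reduce to a generating set and then bootstrap using the fact that $\phi$ is an algebra homomorphism (Lemma~\ref{L:tidy}). Observe that if the desired identity holds for $a = a_1$ and $a = a_2$, then for $a = a_1 a_2$ we have
$$
\phi_{M,V}(a_1 a_2 \cdot z) = \phi_{M,V}(a_1 \cdot (a_2 \cdot z)) = \phi(a_1) \phi_{M,V}(a_2 \cdot z) = \phi(a_1)\phi(a_2) \phi_{M,V}(z) = \phi(a_1 a_2) \phi_{M,V}(z),
$$
where $z = u\tilde\1 \otimes m \otimes v$, and in the last step I use that $\phi$ is an algebra homomorphism. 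Since $U(\tilde \p)$ is generated by $\p$ and $\k^\ne$, it suffices to check the identity separately for $a \in \p$ and for $a \in \k^\ne$.

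The key computation is the image of $\phi(a)$ under $\hat \Delta$ when $a \in \p$. By definition $\phi(x) = x + \sum_i f_i [b_i,x]^\ch$ for $x \in \p$, and $\hat\Delta$ acts as $\hat \Delta(x) = x \otimes 1 + 1 \otimes x$ while $\hat\Delta(f_i) = f_i \otimes 1$ and $\hat\Delta([b_i,x]^\ch) = [b_i,x]^\ch \otimes 1$. Because $\hat\Delta$ is a superalgebra homomorphism, the product terms collapse to $f_i [b_i,x]^\ch \otimes 1$, yielding
$$
\hat\Delta(\phi(x)) = \phi(x) \otimes 1 + 1 \otimes x.
$$
Acting on $\phi(u) \otimes m \otimes v$ and using $\phi(x)\phi(u) = \phi(xu)$ then produces $\phi(xu) \otimes m \otimes v + \phi(u) \otimes m \otimes xv$, which is exactly $\phi_{M,V}$ applied to $x \cdot (u\tilde\1 \otimes m \otimes v) = xu\tilde\1 \otimes m \otimes v + u\tilde\1 \otimes m \otimes xv$ (recall the formula for the $U(\tilde \g)$-action via $\tilde \Delta$ from \S\ref{ss:nllatrans}).

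The case $a = y^\ne \in \k^\ne$ is even easier: here $\phi(y^\ne) = y^\ne$ and $\hat\Delta(y^\ne) = y^\ne \otimes 1$, so the action on $V$ is trivial. One side gives $y^\ne \phi(u) \otimes m \otimes v = \phi(y^\ne u) \otimes m \otimes v$, and the other gives $\phi_{M,V}(y^\ne u \tilde\1 \otimes m \otimes v) = \phi(y^\ne u) \otimes m \otimes v$. With both generating cases verified, the multiplicative bootstrap completes the proof. The one point that deserves care is the compatibility check $\hat\Delta(\phi(x)) = \phi(x) \otimes 1 + 1 \otimes x$ for $x \in \p$; everything else is routine.
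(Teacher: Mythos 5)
Your proof is correct and follows essentially the same route as the paper's: reduce to generators $\p$ and $\k^\ne$ of $U(\tilde\p)$, verify the identity there, and propagate by multiplicativity of $\phi$ (the paper phrases this as "induction on the length of $a$ in a PBW basis"). Your version is actually slightly more explicit than the paper's on the one nontrivial point — you isolate and verify the compatibility $\hat\Delta(\phi(x)) = \phi(x)\otimes 1 + 1 \otimes x$ for $x \in \p$, which the paper compresses into the remark that the penultimate equality "follows from the fact that $\phi$ is an algebra homomorphism and the definitions of $\phi$ and $\hat\Delta$."
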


\begin{proof}
We work by induction on the length of $a$ in a PBW basis for
$U(\tilde \p)$.  First consider the case that $a = x \in \p$.  We
have
\begin{align*}
\phi_{M,V}(x (u\tilde \1 \otimes m \otimes v)) & = \phi_{M,V}(xu\tilde \1
\otimes m \otimes v + u\tilde \1 \otimes m \otimes xv) \\
& = \phi(xu) \otimes m \otimes v + \phi(u) \otimes m \otimes xv \\
& = \phi(x) (\phi(u) \otimes m \otimes v) \\
& = \phi(x) \phi_{M,V}(u\tilde \1 \otimes m \otimes v).
\end{align*}
The second to last equality follows from the fact that $\phi$ is an
algebra homomorphism and the definitions of $\phi$ and $\hat
\Delta$. The case $a = x \in \k^\ne$ is trivial.

The induction step is straightforward and we omit the details.
\end{proof}

We now prove the following analogue of Lemma~\ref{L:mess}.

\begin{Lemma} \label{L:blah}
$$
d_{M,V}(\phi_{M,V}(u\tilde \1 \otimes m \otimes v)) = \sum_{i=1}^r
f_i \phi_{M,V} ([b_i-b_i^{\ne},u]\tilde \1 \otimes m \otimes v + u
\tilde \1 \otimes m \otimes b_i v),
$$
for $u \in U(\tilde \p)$, $m \in M$ and $v \in V$.
\end{Lemma}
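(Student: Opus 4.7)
The plan is to reduce the statement to a direct computation combining Lemma~\ref{L:d_V} and Lemma~\ref{L:mess}, with the main bookkeeping being the distinction between $[b_i-b_i^\ne,u] \in U(\tilde\g)$ and its image $\Pr([b_i-b_i^\ne,u]) \in U(\tilde\p)$ under the PBW projection. First, by the definition of $\phi_{M,V}$ we have
$$
d_{M,V}(\phi_{M,V}(u\tilde\1 \otimes m \otimes v)) = d_{M,V}(\phi(u) \otimes m \otimes v),
$$
and expanding the right-hand side via Lemma~\ref{L:d_V} gives
$$
d(\phi(u)) \otimes m \otimes v + \sum_{i=1}^r f_i \phi(u) \otimes m \otimes b_i v.
$$

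Next, I would apply Lemma~\ref{L:mess} to the first summand, obtaining $\sum_{i=1}^r f_i \phi(\Pr([b_i-b_i^\ne, u])) \otimes m \otimes v$. The left action of $f_i$ on $\hat M \otimes V$ is determined by $\hat\Delta(f_i) = f_i \otimes 1$, so it acts simply by multiplication on the first tensor factor (no signs arise, because $V$ is purely even). Thus each of these summands equals $f_i \cdot(\phi(\Pr([b_i-b_i^\ne,u])) \otimes m \otimes v)$, and similarly each term of the second sum can be written as $f_i \cdot(\phi(u) \otimes m \otimes b_i v)$.

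The key step is then to recognise both expressions as values of $\phi_{M,V}$. Here one uses that, under the identification $\tilde Q \iso U(\tilde\p)$ from Lemma~\ref{L:equiv}, we have $[b_i-b_i^\ne, u]\tilde\1 = \Pr([b_i-b_i^\ne,u])\tilde\1$ in $\tilde Q$ (because elements of $\tilde I$ annihilate $\tilde\1$), so the definition of $\phi_{M,V}$ yields
$$
\phi_{M,V}([b_i-b_i^\ne,u]\tilde\1 \otimes m \otimes v) = \phi(\Pr([b_i-b_i^\ne,u])) \otimes m \otimes v.
$$
Likewise $\phi_{M,V}(u\tilde\1 \otimes m \otimes b_i v) = \phi(u) \otimes m \otimes b_i v$. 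Summing the two pieces reproduces the right-hand side of the claim.

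No step here is delicate; all the genuine work was already carried out in Lemma~\ref{L:mess}, and the rest is a matter of organising the identifications correctly. The only thing to be careful about is that $\phi_{M,V}$ was defined on tensors of the form $w\tilde\1 \otimes m \otimes v$ with $w \in U(\tilde\p)$, and we are applying it to $[b_i-b_i^\ne,u]\tilde\1 \otimes m \otimes v$, which is legitimate precisely because the element $[b_i-b_i^\ne,u]\tilde\1$ of $\tilde Q$ is represented by $\Pr([b_i-b_i^\ne,u]) \in U(\tilde\p)$.
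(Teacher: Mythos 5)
Your proof is correct and follows essentially the same route as the paper's: unwind $\phi_{M,V}$, apply Lemma~\ref{L:d_V}, then Lemma~\ref{L:mess}, and repackage. The one place you add detail beyond what the paper writes out is the observation that $[b_i-b_i^\ne,u]\tilde\1 = \Pr([b_i-b_i^\ne,u])\tilde\1$ in $\tilde Q$, which is the correct justification for the final step and is left implicit in the paper.
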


\begin{proof}
\begin{align*}
d_{M,V}(\phi_{M,V}(u\tilde \1 \otimes m \otimes v)) &=
d_{M,V}(\phi(u) \otimes m \otimes v)
\\ &= d(\phi(u))
\otimes m \otimes v + \sum_{i=1}^r f_i \phi(u) \otimes m \otimes  b_iv \\
&= \sum_{i=1}^r f_i \phi(\Pr([b_i-b_i^{\ne},u])) \otimes m \otimes v
+ \sum_{i=1}^r f_i \phi(u)
\otimes m \otimes  b_iv \\
&= \sum_{i=1}^rf_i \phi_{M,V}([b_i-b_i^{\ne},u]\tilde \1 \otimes m
\otimes v + u \otimes m \otimes b_i v).
\end{align*}
The second equality above is given by Lemma~\ref{L:d_V}, and the
third equality follows from Lemma~\ref{L:mess}.
\end{proof}

Lemma~\ref{L:blah} easily implies the following analogue of Theorem
\ref{T:tidish}.

\begin{Theorem} \label{T:brst}
We have that
$$
M \ostar V = \{z \in \tilde M
\otimes V \mid d_{M,V}(\phi_{M,V}(z)) = 0\}
$$
and $\ker d_{M,V} = \phi_{M,V}(M \ostar V) \oplus \im\ d_{M,V}$.
\end{Theorem}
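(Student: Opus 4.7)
My plan follows the proof of Theorem~\ref{T:tidish}, with Lemma~\ref{L:blah} replacing Lemma~\ref{L:mess} and the quasi-isomorphism $p_{M,V}$ replacing $p$.

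For the set equality, Lemma~\ref{L:blah} can be rewritten as
$$
d_{M,V}(\phi_{M,V}(z)) = \sum_{i=1}^r f_i\,\phi_{M,V}(b_i \cdot z)
$$
for $z \in \tilde M \otimes V$, where $b_i \cdot z$ denotes the dot action of $b_i$ from \eqref{e:dotnlla}. Since $z$ lies in $M \ostar V = H^0(\n,\tilde M \otimes V)$ if and only if $b_i \cdot z = 0$ for all $i$, the $(\Leftarrow)$ implication is immediate. For $(\Rightarrow)$, suppose $d_{M,V}(\phi_{M,V}(z)) = 0$ and apply the chain map $p_{M,V}$. Using the identity $p_{M,V}\circ \phi_{M,V} = \id_{\tilde M \otimes V}$ (explained in the last paragraph), the equation becomes
$$
0 = p_{M,V}\bigl(d_{M,V}(\phi_{M,V}(z))\bigr) = \sum_{i=1}^r (b_i \cdot z)\otimes f_i,
$$
which is the image of $z$ under the Chevalley--Eilenberg differential on the standard complex $\tilde M \otimes V \otimes \wedge(\n^*)$. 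Linear independence of $f_1,\dots,f_r$ then forces each $b_i \cdot z = 0$, i.e.\ $z \in M \ostar V$.

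For the direct sum decomposition, I would use that $p_{M,V}$ is a quasi-isomorphism (as noted just before Proposition~\ref{P:brstequiv}), so that $H^i(\hat M \otimes V, d_{M,V}) \iso H^i(\n,\tilde M \otimes V)$ for all $i \in \Z$. By Corollary~\ref{C:vanish} together with the fact that the standard complex vanishes in negative charge, this cohomology is concentrated in degree $0$, where it equals $M \ostar V$. The identity $p_{M,V}\circ \phi_{M,V} = \id$ shows that the induced map $M \ostar V \to H^0(\hat M \otimes V, d_{M,V})$ is a section of the isomorphism coming from $p_{M,V}$, hence itself an isomorphism. This immediately yields $\phi_{M,V}(M \ostar V) \cap \im d_{M,V} = 0$ and identifies $\phi_{M,V}(M \ostar V)$ with a complement to $\im d_{M,V}$ inside the charge-$0$ part of $\ker d_{M,V}$. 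The vanishing of $H^k$ for $k \neq 0$ gives $\ker d_{M,V}^{\,k} = \im\bigl(d_{M,V}|_{(\hat M \otimes V)^{k-1}}\bigr)$ in all other charges; summing over all charge degrees yields $\ker d_{M,V} = \phi_{M,V}(M \ostar V) \oplus \im d_{M,V}$.

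The only non-routine verification is $p_{M,V}\circ \phi_{M,V} = \id_{\tilde M \otimes V}$. Since $\phi(u) \in U(\hat\p)^0$, the projection $p(\phi(u))$ lies in $\tilde Q \otimes \wedge^0(\n^*) \iso \tilde Q$, where by Lemma~\ref{L:tidy} it equals $q(\phi(u))\tilde\1 = u\tilde\1$; extending by $\otimes\,m \otimes v$ yields the identity on $\tilde M \otimes V$. Granted this, together with the quasi-isomorphism property of $p_{M,V}$ handled before Proposition~\ref{P:brstequiv} by adapting \cite{DDDHK}, the rest of the argument above is formal.
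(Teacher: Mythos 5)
Your proof is correct and takes essentially the same route as the paper, which defers to the proof of \cite[Thm.\ 2.8]{BGK} (Theorem~\ref{T:tidish}) with $\phi$, $p$, $q$ replaced by $\phi_{M,V}$, $p_{M,V}$, and Lemma~\ref{L:mess} replaced by Lemma~\ref{L:blah}; you spell out the deferred argument. One small point worth making explicit: in the $(\Rightarrow)$ direction, the identity $p_{M,V}\circ\phi_{M,V} = \id$ alone does not yield $p_{M,V}\bigl(\sum_i f_i\,\phi_{M,V}(b_i\cdot z)\bigr) = \sum_i (b_i\cdot z)\otimes f_i$ — you also need the chain-map property $p_{M,V}\circ d_{M,V} = d'\circ p_{M,V}$ (where $d'$ is the Chevalley--Eilenberg differential on $\tilde M \otimes V \otimes \wedge(\n^*)$), which you do invoke by calling $p_{M,V}$ a chain map, but the two facts should be cited together at that step.
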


\begin{Remark} \label{R:BRSTshort}
In the proof of Theorem \ref{T:dualizable}, we consider the case
where $M = U(\g,e)$, where we can identify $\tilde M \otimes V$ with
$\tilde Q \otimes V$ and $\hat M \otimes V$ with $U(\hat \g) \otimes
V$.  We write $d_V$ for the differential on $U(\hat \g) \otimes V$,
and apply the above results with this notational convention.
\end{Remark}

\section{Right-handed versions} \label{S:right}

In Section~\ref{S:transdual}, we require a right-handed versions of
certain definitions and results from earlier in the paper.  The
required material is presented below.

\subsection{Right-handed version of $U(\g,e)$ and translation}
\label{ss:right}

There is a right-handed analogue of $U(\g,e)$, which we denote by
$U(\g,e)'$, note that this notation differs from that used in
\cite[\S2]{BGK}.  An isomorphism between $U(\g,e)$ and $U(\g,e)'$ is
given in \cite[Cor.\ 2.9]{BGK}.  We give the definition of
$U(\g,e)'$ below and recall the isomorphism with $U(\g,e)$, before
discussing the right-handed version of translation.

Let $\tilde I'$ be the right ideal of $U(\tilde \g)$ generated by $x
- x^{\ne} -\chi(x)$ for $x \in \n$, and define $\tilde Q' = U(\tilde
\g)/\tilde I'$; we denote $\tilde \1' = 1 + \tilde I'$. We have a
direct sum decomposition $U(\tilde \g) = U(\tilde \p) \oplus \tilde
I'$; we write $\Pr' : U(\tilde \g) \to U(\tilde \p)$ for the
projection along this direct sum decomposition.  The right twisted
adjoint action of $\n$ on $U(\tilde \p)$ is given by $x \cdot u =
\Pr'([x-x^\ne,u])$ and we define the {\em right-handed finite
$W$-algebra}
$$
U(\g,e)' = H^0(\n,U(\tilde \p))
$$
where the cohomology is taken with respect to the right twisted adjoint
action.

We define $\beta \in \t^*$ by $\beta = \sum_{i=1}^r \beta_i \in
\t^*$; recall that $\beta_i$ is the $\t$-weight of $b_i$, and
$b_1,\dots,b_r$ is a basis of $\n$.   Then $\beta$ extends to a
character  of $\p^*$, by \cite[Lemma 2.5]{BGK}, and we can define
the shift automorphism $S_\beta : U(\tilde \p) \to U(\tilde \p)$ by
$S_\beta(x) = x + \beta(x)$ for $x \in \p$ and $S_\beta(y^\ne) =
y^\ne$ for $y \in \k$. By \cite[Cor.\ 2.9]{BGK}, $S_\beta$ restricts
to an isomorphism $U(\g,e)' \isoto U(\g,e)$.

We can define a right-handed version of translation as follows.  Let
$M'$ be a right $U(\g,e)'$-module and let $V'$ be a finite
dimensional $U(\g)$-module.  There is an obvious structure of a left
$U(\g,e)'$-module on $\tilde Q$, so we can form the $U(\tilde
\g)$-module $M' \otimes_{U(\g,e)'} \tilde Q'$.  Now we can give $(M'
\otimes_{U(\g,e)'} \tilde Q') \otimes V'$ the structure of $U(\tilde
\g)$-module through $\tilde \Delta$.  Given any right $U(\tilde
\g)$-module $E$, we can define a {\em right dot action} of $\n$ on
$E$ by
$$
v \cdot x = v(x-\chi(x)-x^{\ne})
$$
for $v \in E$ and $x \in \n$. Thus we may define
$$
M' \ostar' V' = H^0(\n,(M' \otimes_{U(\g,e)'} \tilde Q') \otimes V'),
$$
where cohomology is taken with respect to the right dot action.  Then
$M' \ostar' V'$ is a right $U(\g,e)'$-module.

We also have a right-handed definition of lift matrices analogous to
Definition~\ref{D:lift} as follows.  Let $V'$ be a finite
dimensional right $U(\g)$-module with ordered basis $\bv' =
(v'_1,\dots,v'_n)$ of $\t$-weight vectors.  Denote the
$c$-eigenvalue on $v_i$ by $c_i'$, and assume that $c'_1 \ge \dots
\ge c'_n$.  We define $V'$-block lower unitriangular matrices in the
analogous way to how $V$-block lower unitriangular matrices are
defined. The coefficient functions $b'_{ij} \in U(\g)^*$ of $V'$ are
defined by $v_i u = \sum_{j=1}^n b'_{ij}(u) v_j$. Then we say that a
$V'$-block lower unitriangular matrix $\bx' = (x'_{ij})$ with
entries in $U(\tilde \p)$ is a lift matrix for $\bv'$ if
$$
\Pr'([x-x^{\ne},x'_{ij}]) + \sum_{k=1}^n b'_{kj}(x) x'_{ik}  = 0,
$$
for all $x \in \n$, and $y_{ij} \in F_{c_j - c_i} U(\tilde \p)$.

There is right-handed version of Proposition~\ref{P:lift}, which
says that if $\bx'$ is a lift matrix for $V'$, and $M'$ is a right
$U(\g,e)'$-module then the map $\psi_{\bx',M',V'} : M' \otimes V'
\to M' \ostar' V'$ defined by $\psi_{\bx',M',V'}(m \otimes v'_j) =
\sum_{j=1}^n m \otimes x'_{ij} \tilde \1'  \otimes v'_i$, for $m \in
M$, is a vector space isomorphism.  Further, given any two lift
matrices $\bx'$ and $\by'$ for $\bv'$, there is a $V'$-block lower
unitriangular matrix $\bw'$ with entries in $U(\g,e)'$ such that
$\by' = \bx' \bw'$.

\subsection{Right-handed version of BRST definition} \label{ss:brstright}

Below we outline the right-handed analogues of the results from
\S\ref{ss:brsttrans} that we require in Section~\ref{S:transdual};
we only consider translations of the regular module, as that is all
that is required. We continue to use the notation from the previous
subsection.

There is a right-handed version $\phi' : U(\tilde \p) \to U(\hat
\g)$ of $\phi$ from \eqref{e:phi} defined by $\phi'(x) = x -
\sum_{i=1}^r [b_i,x]^{\ch} f_i$, giving a version of
Theorem~\ref{T:tidish}.  By \cite[Lem.\ 2.6]{BGK}, we have $\phi' =
\phi S_\beta$.
We can view $U(\hat \g) \otimes V'$ as a differential graded right
$U(\hat \g)$-module, with differential $d_{V'}$ given by taking the
right supercommutator with $\delta$. The cohomology $H^0(U(\hat \g)
\otimes V', d_{V'})$ gives the right-handed version of the BRST
definition of translation of the right regular $U(\g,e)'$-module and
the analogue of Proposition~\ref{P:brstequiv} says that this is
isomorphic as a $U(\g,e)'$-module to $U(\g,e)' \ostar' V'$. Finally,
we have a right-handed version of Theorem~\ref{T:brst}, which in
particular says that for $z \in  \tilde Q' \otimes V'$, we have that
$d_{V'}(\phi'_{V'}(z)) = 0$ if and only if $z \in  M \ostar V$,
where $\phi'_{V'} : \tilde Q' \otimes V' \to U(\hat \g) \otimes V$
is defined by $\phi'_{V'}(u\tilde 1' \otimes v) = \phi'(u) \otimes
v$, for $u \in U(\tilde \p)$ and $v \in V$.

\section{Translation commutes with duality} \label{S:transdual}

In this section we prove Theorem~\ref{T:transdual} saying that
translation commutes with duality for modules in $\cO(e)$; it is
proved for the case $\g = \gl_n(\C)$ in \cite[Thm.\ 8.10]{BK2}.
Before we can state and prove this theorem we need to consider the
notions of restricted duals and dualizable modules.

\subsection{Restricted duals}

A special case of following definition is given in
\cite[(5.2)]{BK2}.

\begin{Definition} \label{D:restdual}
Let $M \in \cO(e)$.  We define the {\em restricted dual} $\bar M$ of
$M$ to be the vector space
$$
\bar M = \bigoplus_{\alpha \in (\t^e)^*} M_\alpha^*,
$$
where $M_\alpha^*$ is the normal dual of the finite dimensional
space $M_\alpha$.  Then $\bar M$ is a subspace of the full dual
$M^*$, so we can define a right action of $U(\g,e)$ on $\bar M$ by
$(fu)(m) = f(um)$ for $f \in \bar M$, $u \in U(\g,e)$ and $m \in M$
making $\bar M$ in to a right $U(\g,e)$-module. Further, we view
$\bar M$ as a right $U(\g,e)'$-module through the isomorphism
$S_\beta$ explained in \S\ref{ss:right}.
\end{Definition}

We define the category $\cO(e)' = \cO(e;\t,\q)$
of right $U(\g,e)'$-modules in analogy to
the category $\cO(e)$.  The following lemma is immediate.

\begin{Lemma} \label{L:dual}
Let $M \in \cO(e)$.  Then $\bar M \in \cO(e)'$.
\end{Lemma}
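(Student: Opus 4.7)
The verification amounts to transferring the two defining conditions of $\cO(e)$ through the restricted duality and the shift isomorphism $S_\beta$, so the plan is essentially bookkeeping with weights.

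First I would pin down the weight space decomposition of $\bar M$ explicitly. For the right $U(\g,e)$-action $(fu)(m) = f(um)$, the summand $M_\alpha^* \subset \bar M$ is a $\t^e$-weight space of weight $\alpha$: indeed, for $f \in M_\alpha^*$ and $t \in \t^e$ one computes $(ft)(m) = f(tm) = \alpha(t)f(m)$ on $m \in M_\alpha$ while $(ft)(m) = 0$ on $M_\mu$ for $\mu \ne \alpha$. Pulling back along $S_\beta : U(\g,e)' \isoto U(\g,e)$ introduces the uniform shift $t \mapsto t + \beta(t)$ on the embedded copy of $\t^e$ (since $\t^e \subset \p$), so as a right $U(\g,e)'$-module $M_\alpha^*$ sits in $\bar M$ with weight $\alpha + \beta|_{\t^e}$. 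In particular the weight support of $\bar M$ is the weight support of $M$ translated by the fixed character $\beta|_{\t^e} \in (\t^e)^*$.

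Condition (i) of $\cO(e)'$ is then immediate: $\t^e$ acts semisimply with weight spaces $(\bar M)_{\alpha + \beta|_{\t^e}} = M_\alpha^*$, and $\dim M_\alpha^* = \dim M_\alpha < \infty$ by hypothesis on $M$. Condition (ii) follows from the observation that translation by a fixed element of $(\t^e)^*$ carries a finite union of lower cones $\{\nu \le \mu_j\}$ to the finite union $\{\nu \le \mu_j + \beta|_{\t^e}\}$, so the weight set of $\bar M$ remains in a finite union of the required form for the appropriate partial order on $(\t^e)^*$.

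The only place where one should be careful is the implicit finite-generation clause in the definition of $\cO(e)'$, if indeed it is imposed in parallel with $\cO(e)$. The plan to handle this would be to take the ``top'' weight spaces of $M$: pick weights $\mu_1,\dots,\mu_k$ dominating the support of $M$; a standard argument in category $\cO$-style contexts shows that the (finitely many, finite-dimensional) weight spaces $(\bar M)_{\mu_j + \beta|_{\t^e}}$ together generate $\bar M$ over $U(\g,e)'$, by combining the weight grading of $U(\g,e)'$ (giving a restricted root space decomposition analogous to $U(\g,e) = \bigoplus_\alpha U(\g,e)_\alpha$) with the fact that $M$ is itself finitely generated with bounded support. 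This is the only part that is not completely formal, but since the lemma is announced as ``immediate,'' I expect the defining conditions of $\cO(e)'$ are taken to mirror (i) and (ii) above and the two verifications already suffice; in any case, no further structural input is needed beyond the weight-space identification $(\bar M)_{\alpha + \beta|_{\t^e}} = M_\alpha^*$.
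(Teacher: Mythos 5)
Your proof is correct and is essentially the expansion of the paper's ``immediate,'' which rests on exactly the bookkeeping you carry out: $M_\alpha^*$ is the $\t^e$-weight space of $\bar M$, uniformly shifted by $\beta|_{\t^e}$ once one passes to the right $U(\g,e)'$-structure through $S_\beta$, so axioms (i) and (ii) of $\cO(e)'$ transfer verbatim. Your caveat about finite generation is the one genuinely non-formal point, and you are right that the paper implicitly folds it into ``immediate''; your sketch via finitely many maximal weight spaces and the restricted root decomposition of $U(\g,e)'$ is the standard way to close it.
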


\subsection{Dualizable modules}

The notion of a dualizable finite dimensional $U(\g)$-module $V$ is
defined in \cite[\S8.4]{BK2} for the case $\g = \gl_n(\C)$.  In
\cite[Thm.\ 8.10]{BK2} it is shown that translation commutes with
duality for dualizable modules, and in \cite[Thm.\ 8.12]{BK2} all
finite dimensional $U(\g)$-modules are proved to be dualizable. We
follow the same approach in our proof of Theorem~\ref{T:transdual}
below.

Let $V$ be a finite dimensional $U(\g)$-module.  We use the notation
$\bar V$ to mean $V^*$ viewed as a right $U(\g)$-module in the usual
way. In the definition below we require lift matrices as defined in
Definition~\ref{D:lift}, and \S\ref{ss:right} for the right-handed
version.  We fix an ordered basis $\bv = (v_1,\dots,v_n)$ of $V$ as
in \S\ref{ss:lift} and let $\bv^* = (v^1,\dots,v^n)$ be the dual
basis of $V^*$.

\begin{Definition} \label{D:dualizable}
We say that $V$ is {\em dualizable} if it is possible to find lift
matrices $\bx$ for $\bv$ and $\by$ for $\bv^*$ such that
$S_\beta(\by) = (S_\beta(y_{ij})) = \bx^{-1}$.
\end{Definition}

For our proof that all finite dimensional $U(\g)$-modules are
dualizable in Theorem~\ref{T:dualizable}, we require the structure
of a differential graded superalgebra on  $U(\hat \g) \otimes
\End(V)$, which is verified in Lemma~\ref{L:diffgrad}.

The endomorphism algebra $\End(V) \iso V \otimes \bar V$ of $V$ is a
$U(\g)$-bimodule in the usual way, i.e.\ $(uau')(v) = u(a(u'v))$ for
$u, u' \in U(\g)$, $a \in \End(V)$ and $v \in V$. We set $v_i^j =
v_i \times v^j$, so that $\{v_i^j \mid i,j = 1,\dots,n\}$ is a basis
of $\End(V)$.

We give $U(\hat \g) \otimes \End(V)$ the tensor product structure of
an algebra; so that it is isomorphic to the algebra of $n \times n$
matrices with entries in $U(\hat \g)$. The charge grading on $U(\hat
\g)$ is extended to $U(\hat\g) \otimes \End(V)$ by declaring that
$\End(V)$ is in degree 0. We note that $U(\hat \g) \otimes \End(V)$
has the structure of a $U(\hat \g)$-bimodule using the
comultiplication $\hat \Delta$ given in \S\ref{ss:brsttrans}.
Therefore we can define a differential $d_{\End(V)}$ on $U(\hat \g)
\otimes \End(V)$ by taking the supercommutator with $\delta$, i.e.
$$
d_{\End(V)}(u \otimes a) = \delta(u \otimes a) - (-1)^{p(u)}(u
\otimes a)\delta
$$
for $u \in U(\hat \g)$ and $a \in \End(V)$
Now arguing as for Lemma~\ref{L:d_V}, we obtain the formula
\begin{equation} \label{e:dEnd}
d_{\End(V)}(u \otimes a) = d(u) \otimes a + \sum_{i=1}^n f_i u
\otimes b_i a - (-1)^{p(u)} u f_i \otimes a b_i.
\end{equation}

\begin{Lemma} \label{L:diffgrad}
With the above definitions $U(\hat \g) \otimes \End(V)$ is a
differential graded superalgebra.  Therefore, $H^0(U(\hat \g)
\otimes \End(V),d_{\End(V)})$ is an algebra.
\end{Lemma}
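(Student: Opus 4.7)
The plan is to realize $d_{\End(V)}$ as the supercommutator with a single odd square-zero element of the tensor product superalgebra $U(\hat \g) \otimes \End(V)$; once this is done, both the graded Leibniz rule and $d_{\End(V)}^2 = 0$ follow from standard superalgebra formalities.

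First I would verify that the comultiplication $\hat \Delta : U(\hat \g) \to U(\hat \g) \otimes U(\g)$ from \S\ref{ss:brsttrans} is a superalgebra homomorphism, a routine check on generators. Composing with $\id \otimes \rho$, where $\rho : U(\g) \to \End(V)$ is the representation of $\g$ on $V$, yields a superalgebra homomorphism $\Phi : U(\hat \g) \to U(\hat \g) \otimes \End(V)$. Using $\hat \Delta(f_i) = f_i \otimes 1$, $\hat \Delta(b_i) = b_i \otimes 1 + 1 \otimes b_i$, $\hat \Delta(b_i^\ne) = b_i^\ne \otimes 1$, and the defining expression for $\delta$, one computes
$$\hat \delta := \Phi(\delta) = \delta \otimes 1 + \sum_{i=1}^r f_i \otimes \rho(b_i) \in U(\hat \g) \otimes \End(V),$$
which is odd of charge degree $1$.

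Next, I would unpack the $U(\hat \g)$-bimodule structure of $U(\hat \g) \otimes \End(V)$: for $v \in U(\hat \g)$, the left (respectively right) bimodule action of $v$ coincides with left (respectively right) multiplication by $\Phi(v)$ inside the tensor product superalgebra. Applied to $v = \delta$ and using that $\End(V)$ is purely even (so no extra signs arise when transposing $\End(V)$-factors past $U(\hat \g)$-factors), a short sign computation recovers precisely the formula \eqref{e:dEnd}, giving
$$d_{\End(V)}(z) = \hat \delta\, z - (-1)^{p(z)}\, z\, \hat \delta = [\hat \delta, z]$$
for homogeneous $z \in U(\hat \g) \otimes \End(V)$, where $[\,,\,]$ denotes the graded supercommutator.

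With this identification, the two remaining claims are formal. The graded supercommutator with an odd element of an associative superalgebra is automatically a parity-reversing superderivation, which is the Leibniz rule for $d_{\End(V)}$. The vanishing $d_{\End(V)}^2 = 0$ reduces, via the standard identity $[\hat \delta, [\hat \delta, z]] = [\hat \delta^2, z]$ valid for odd $\hat \delta$ (with the outer bracket the ordinary commutator of the even element $\hat \delta^2$), to checking $\hat \delta^2 = 0$. But since $\Phi$ is a superalgebra homomorphism, $\hat \delta^2 = \Phi(\delta^2)$, and $\delta^2 = 0$ in $U(\hat \g)$ is exactly the relation $[\delta,\delta] = 2\delta^2 = 0$ recalled in \S\ref{ss:brstw}. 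The main (indeed only substantive) obstacle is the sign bookkeeping in identifying the $\hat \Delta$-bimodule action with two-sided multiplication by $\hat \delta$ in the tensor product superalgebra; after that verification, the remaining conclusions, including the fact that $H^0(U(\hat\g) \otimes \End(V), d_{\End(V)})$ inherits an algebra structure, are automatic.
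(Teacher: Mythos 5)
Your proof is correct and takes a genuinely different, more conceptual route than the paper's. The paper proves the Lemma by brute force: it writes out both sides of the super-Leibniz identity using the explicit formula \eqref{e:dEnd} for $d_{\End(V)}$ and observes that they agree after a cancellation of the two terms $\pm(-1)^{p(u)}\sum_i u f_i u' \otimes ab_ia'$. Your approach instead recognizes that the $\hat\Delta$-bimodule action of $\delta$ is nothing but two-sided multiplication by the single odd element $\hat\delta = \Phi(\delta) = \delta \otimes 1 + \sum_i f_i \otimes \rho(b_i)$ inside the tensor product superalgebra (the sign-free identification you flag is indeed harmless precisely because $\End(V)$ is purely even), so that $d_{\End(V)}$ is an \emph{inner} superderivation $[\hat\delta,\cdot]$ and the Leibniz rule is automatic. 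What your approach buys is twofold: first, it delivers $d_{\End(V)}^2 = 0$ for free from $\hat\delta^2 = \Phi(\delta^2) = 0$, whereas the paper silently assumes this; second, it makes the statement conceptually transparent and robust under variations of $V$, since the only input is that $\Phi$ is a superalgebra homomorphism carrying the odd square-zero $\delta$ to an odd square-zero $\hat\delta$. The cost is the preliminary verification that $\hat\Delta$ is a superalgebra homomorphism and the computation of $\Phi(\delta)$, both of which you correctly flag and both of which check out: the paper already asserts the former as trivial, and your formula for $\hat\delta$ is exactly what one gets from $\hat\Delta(f_i) = f_i\otimes 1$, $\hat\Delta(b_i) = b_i\otimes 1 + 1\otimes b_i$, $\hat\Delta(b_i^\ne) = b_i^\ne\otimes 1$, and $\hat\Delta([b_i,b_j]^{\ch}) = [b_i,b_j]^{\ch}\otimes 1$.
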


\begin{proof}
We just need to check that $d_{\End(V)}$ is a
superderivation of $U(\hat \g) \otimes \End(V)$, i.e.
$$
d_{\End(V)}((u \otimes a)(u' \otimes a')) = d_{\End(V)}(u \otimes
a)(u' \otimes a') + (-1)^{p(u)} (u \otimes a)d_{\End(V)}(u' \otimes
a').
$$
The left-hand side is equal to
$$
(d(u)u' + (-1)^{p(u)}ud(u')) \otimes aa' + \sum_{i=1}^n f_i uu'
\otimes b_i aa' - (-1)^{p(uu')} \sum_{i=1}^n uu' f_i \otimes aa'b_i
$$
and the right-hand side is equal to
$$
d(u)u' \otimes aa' + \sum_{i=1}^n f_i uu' \otimes b_iaa' -
(-1)^{p(u)} \sum_{i=1}^n u f_i u' \otimes ab_ia' +
$$
$$
(-1)^{p(u)}( ud(u') \otimes aa' + \sum_{i=1}^n u f_i u' \otimes
ab_ia' - (-1)^{p(u')}\sum_{i=1}^n uu'f_i \otimes aa'b_i).
$$
After a cancellation we see that these expressions are equal.
\end{proof}

\begin{Remark}
The algebras $H^0(U(\hat \g) \otimes \End_V, d_{\End(V)})$
may be of independent interest.  For example, one can show that
$H^0(U(\hat \g) \otimes \End_V, d_{\End(V)})$ is a deformation
of $U(\g,e) \otimes \End(V)$.  We
do not require this here, so we omit the details.
\end{Remark}

We are now in a position to prove that all finite dimensional
$U(\g)$-modules are dualizable.  In the proof we use the notational
convention given in Remark~\ref{R:BRSTshort} and its right-handed
analogue; as well as the notation given above.  This theorem was
proved in the case where $\g$ is of type $A$ in \cite[Thm.\
8.13]{BK2} by directly computing enough lift matrices.

\begin{Theorem} \label{T:dualizable}
Let $V$ be a finite dimensional $U(\g)$-module.  Then $V$ is dualizable.
\end{Theorem}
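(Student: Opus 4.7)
The plan is to produce the required right-handed lift matrix $\by$ for $\bv^*$ directly from any given left-handed lift matrix $\bx$ for $\bv$, by encoding lift matrices in the differential graded superalgebra $U(\hat \g) \otimes \End(V)$ of Lemma~\ref{L:diffgrad} via the algebra homomorphism $\phi$ of Lemma~\ref{L:tidy} and then inverting inside this superalgebra; the identity $\phi' = \phi \circ S_\beta$ recalled in \S\ref{ss:brstright} then converts the inverse into data in the right-handed BRST picture.

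Fix any lift matrix $\bx = (x_{ij})$ for $\bv$ (one exists by Lemma~\ref{L:inverse}) and set
$$
X = \sum_{i,j} \phi(x_{ij}) \otimes E_{ij} \;\in\; (U(\hat \g) \otimes \End(V))^0,
$$
where $E_{ij}$ are the matrix units in the basis $\bv$. Since $\bx$ is $V$-block lower unitriangular and $\phi$ is an algebra homomorphism, $X$ is invertible with $X^{-1} = \sum_{i,j} \phi((\bx^{-1})_{ij}) \otimes E_{ij}$. The key computation evaluates $d_{\End(V)}(X)$ using \eqref{e:dEnd}: Lemma~\ref{L:mess} rewrites $d(\phi(x_{ij}))$, and the lift equation \eqref{e:lift} makes this contribution cancel exactly against the ``$b_k E_{ij}$'' summand of \eqref{e:dEnd}. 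Only the ``right $f_k$'' summand survives, and using $\rho(b_k) E_{lm} = \sum_p b_{pl}(b_k) E_{pm}$ one recognises it as
$$
d_{\End(V)}(X) \;=\; -\,X R, \qquad R := \sum_{k=1}^r f_k \otimes \rho(b_k),
$$
where $\rho(b_k)$ is the matrix of $b_k$ acting on $V$ in the basis $\bv$. Because $X$ is even, super-Leibniz applied to $d_{\End(V)}(X X^{-1}) = 0$ yields $d_{\End(V)}(X^{-1}) = R X^{-1}$.

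Next define $\by = (y_{ij})$ by $y_{ij} := S_\beta^{-1}((\bx^{-1})_{ij})$. The identity $\phi' = \phi \circ S_\beta$ and injectivity of $\phi$ give $\phi'(y_{ij}) = \phi((\bx^{-1})_{ij})$, so $X^{-1} = \sum_{i,j} \phi'(y_{ij}) \otimes E_{ij}$. Expanding $d_{\End(V)}(X^{-1}) = R X^{-1}$ coefficient-by-coefficient in $E_{ij}$, the middle summand of \eqref{e:dEnd} cancels $R X^{-1}$ by the same matrix-unit calculation, leaving
$$
d(\phi'(y_{ij})) \;=\; \sum_{k,l} b_{lj}(b_k)\,\phi'(y_{il})\, f_k \qquad \text{for all } i,j.
$$
The right-handed analogue of Lemma~\ref{L:mess}, namely $d(\phi'(u)) = \sum_k \phi'(\Pr'([u, b_k - b_k^{\ne}])) f_k$ for $u \in U(\tilde\p)$, can be verified either by a calculation modelled on \cite[Lem.\ 2.7]{BGK} or by transporting Lemma~\ref{L:mess} through $\phi' = \phi S_\beta$; combined with the injectivity of $\phi'$ and the PBW-linear independence of terms with distinct $f_k$ on the right (which reduces to linear independence of $\{f_k\}$ in $\wedge^1(\n^*)$ by applying the quasi-isomorphism $p$ from \eqref{e:pqiso}), this extracts $\Pr'([b_k - b_k^{\ne}, y_{ij}]) + \sum_l b_{lj}(b_k)\, y_{il} = 0$ at each generator, and hence by linearity the full right-handed lift matrix equation of \S\ref{ss:right} for $\by$.

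What remains is the verification of $V^*$-block lower unitriangularity and the Kazhdan degree bounds on $\by$. This is routine: the $V$-block lower unitriangular matrices form a group, so $\bx^{-1}$ inherits this property from $\bx$; a short induction on the Neumann expansion gives $(\bx^{-1})_{ij} \in F_{c_j - c_i} U(\tilde \p)$; and $S_\beta$ preserves both the Kazhdan filtration and scalar entries, so these properties transfer directly to $\by$ and translate into the required $V^*$-conditions under the natural identification of the indexing of $\bv^*$ with that of $\bv$ via the dual pairing. By construction $S_\beta(\by) = \bx^{-1}$, so $V$ is dualizable. The principal obstacle will be establishing the right-handed analogue of Lemma~\ref{L:mess}, whose proof involves a somewhat delicate bracket computation inside $U(\hat \g)$ accounting for the interaction of $\delta$, the odd generators $f_i$ and $b_i^{\ch}$, and the shift $S_\beta$; every other step is essentially algebraic manipulation inside the BRST dg superalgebra.
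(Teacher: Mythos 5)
Your approach is genuinely different from the paper's, although both operate inside the same BRST differential graded superalgebra $U(\hat \g) \otimes \End(V)$ of Lemma~\ref{L:diffgrad}. The paper begins with \emph{two} arbitrary lift matrices $\bx$ for $\bv$ and $\by$ for $\bv^*$, encodes their columns and rows as $d_{\End(V)}$-cocycles living in the sub-bimodules $U(\hat \g) \otimes V^1$ and $U(\hat \g) \otimes V_n$ (spanned by $v_k^1$ and $v_n^l$, which are killed by $\n$ on one side), multiplies inside $\End(V)$ so that everything collapses into $U(\hat \g) \otimes v_n^1$ (where $\n$ acts trivially on both sides), deduces via Theorem~\ref{T:tidish} that $S_\beta(\by)\bx$ lies in $M_n(U(\g,e))$, and then invokes Proposition~\ref{P:lift}(c) to modify one of the two matrices so that the product becomes the identity. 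You instead take only one lift matrix $\bx$, form the even element $X = \phi(\bx)$, compute the \emph{full} differential $d_{\End(V)}(X) = -XR$ using the left-handed Lemma~\ref{L:mess} together with the lift equation \eqref{e:lift}, invert by the super-Leibniz rule to get $d_{\End(V)}(X^{-1}) = RX^{-1}$, and then read off the right-handed lift equation for $\by := S_\beta^{-1}(\bx^{-1})$ from this identity via the right-handed analogue of Lemma~\ref{L:mess}. I verified the sign bookkeeping in $d_{\End(V)}(X) = -XR$ and the cancellation pattern that isolates the $E_{ij}b_k$ summand; they are correct, as is your observation that the right $c$-eigenvalue of $v^i$ in $\bar V$ equals $c_i$ (so the $V^*$-block-unitriangular and Kazhdan-degree conditions coincide with the $V$-block ones under the index identification) and the Neumann-expansion argument for the degree bound on $\bx^{-1}$.

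What each approach buys: your proof is more constructive. You directly exhibit the dual lift matrix $\by = S_{-\beta}(\bx)^{-1}$, which is exactly the content of the paper's Lemma~\ref{L:invlift}; in other words you prove Lemma~\ref{L:invlift} outright and Theorem~\ref{T:dualizable} is an immediate corollary, whereas the paper proves the existence statement of Theorem~\ref{T:dualizable} first and then derives Lemma~\ref{L:invlift} from it and Proposition~\ref{P:lift}(c). The paper's route avoids explicitly invoking the right-handed analogue of Lemma~\ref{L:mess} at the level of its statement, but this dependence is merely relocated: the paper appeals to ``the right-handed version of Theorem~\ref{T:brst} discussed in \S\ref{ss:brstright}'', and that theorem's proof (via Lemma~\ref{L:blah}) requires precisely the same right-handed analogue. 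So the obstacle you flag is not actually an extra burden relative to the paper, just a more honest exposure of what is being used. It does remain a genuine proof obligation --- either a direct computation modelled on \cite[Lem.\ 2.7]{BGK} or a careful transport through $\phi' = \phi S_\beta$ (the latter is not as immediate as it might look, since $\Pr$ and $\Pr'$ project along different ideals and the odd generators $f_k$ move from one side to the other) --- but it is a known, bounded piece of algebra that the paper itself implicitly relies upon.

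Two small remarks for the write-up: (1) make explicit the identity $b'_{ij} = b_{ij}$ for the right coefficient functions of $\bar V$, which you implicitly use when matching the extracted equation against the right-handed lift equation from \S\ref{ss:right}; (2) be careful with the sign in the right-handed Lemma~\ref{L:mess}: with your stated form $d(\phi'(u)) = \sum_k \phi'(\Pr'([u, b_k - b_k^{\ne}])) f_k$ (commutator in the opposite order to the left-handed version), the signs do come out correctly against \eqref{e:lift}'s right-handed analogue, but it is worth flagging that this reversal is forced.
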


\begin{proof}
In the right action of $\n$ on $\bar V$, we have that $v^1$ is
killed by $\n$. This implies that the space $V^1$ spanned by
$v_1^1,\dots,v_n^1$ is a $U(\n)$-sub-bimodule of $\End(V)$
isomorphic to $V$ as a left module and trivial as a right module.
This means that $U(\hat \g) \otimes V^1$ is stable under the action
of $d_{\End(V)}$.  Moreover, $U(\hat \g) \otimes V^1$  is a
sub-left-module of $U(\hat \g) \otimes \End(V)$ and as such is
isomorphic to $U(\hat \g) \otimes V$ viewed as a $U(\hat \g)$-module
as in \S\ref{ss:brsttrans}.  Further, the differential $d_V$ on
$U(\hat \g) \otimes V$ identifies with the restriction of
$d_{\End(V)}$ through this isomorphism, by Lemma~\ref{L:d_V} and
\eqref{e:dEnd}.

Similarly, $v_n$ is killed by $\n$ (in the left action of $\n$ on
$V$), and this means the space $V_n$ spanned by $v_n^1,\dots,v_n^n$
is a $U(\n)$-sub-bimodule of $\End(V)$ isomorphic to $\bar V$ as a
right module and trivial as a left module.  This means that $U(\hat
\g) \otimes V_n$ is stable under the action of $d_{\End(V)}$ and is
isomorphic to $U(\hat \g) \otimes \bar V$ viewed as a right $U(\hat
\g)$-module as in  \S\ref{ss:brstright}. Also through this
isomorphism the differential $d_{\bar V}$ identifies with the
restriction of $d_{\End(V)}$ on even parts of the grading of $U(\hat
\g) \otimes V$ and with $-d_{\End(V)}$ on odd parts.

Now let $\bx$ and $\by$ be lift matrices for $\bv$ and $\bv^*$
respectively. Then $\sum_{i=1}^n x_{ij} \tilde \1 \otimes v_i \in
\tilde Q \otimes V$ is an invariant for the dot action of $\n$ for
each $j$, by Proposition~\ref{P:lift}; and $\sum_{j=1}^n
y_{ij}\tilde 1' \otimes v^j$ is an invariant for the right dot
action of $\n$ for each $i$, by the right-handed version of
Proposition~\ref{P:lift}. Now by Theorem~\ref{T:brst} we have
$$
d_V\left(\sum_{k=1}^n \phi(x_{kj}) \otimes v_k\right) = 0,
$$
for each $j$; and by the right-handed version of
Theorem~\ref{T:brst} discussed in \S\ref{ss:brstright} we have
$$
d_{\bar V}\left(\sum_{l=1}^n \phi'(y_{il}) \otimes v^l\right) = 0.
$$
for each $i$.

Putting this altogether and recalling that $\phi' = \phi S_\beta$,
by \cite[Lem.\ 2.6]{BGK}, we have
$$
d_{\End(V)}\left(\sum_{k=1}^n \phi(x_{kj}) \otimes v_k^1\right) = 0,
$$
for each $j$, and
$$
d_{\End(V)}\left(\sum_{l=1}^n \phi(S_\beta(y_{il})) \otimes
v_n^l\right) = 0,
$$
for each $i$.

Now if $u \otimes a,u' \otimes a' \in \ker d_{\End(V)}$, then $(u
\otimes a)(u' \otimes a') = uu'\otimes aa' \in \ker d_{\End(V)}$, by
Lemma~\ref{L:diffgrad}.  Thus, the product
$$
\left(\sum_{l=1}^n \phi'(y_{il}) \otimes
v_n^l\right)\left(\sum_{k=1}^n \phi(x_{kj}) \otimes v_k^1\right)
$$
is in the kernel of $d_{\End(V)}$.  Since $\bx$ and $\by$ are
($V$-block)lower unitriangular, this product is precisely
$$
\sum_{k=1}^n  \phi(S_\beta(y_{ik})) \phi(x_{kj}) \otimes v_n^1,
$$
i.e.\ the $(i,j)$ entry of $\phi(S_\beta(\by)\bx)$ tensored with
$v_n^1$. Now since $v_n^1$ is killed by $\n$ on both sides, we must
have that $d(\phi(S_\beta(\by)\bx)_{ij}) = 0$ for each $i$ and $j$.
Therefore, by Theorem~\ref{T:tidish}, we have that
$(S_\beta(\by))\bx)_{ij} \in U(\g,e)$. Thus, $\bx S_\beta(\by) =
\bw$ is a $V$-block lower unitriangular matrix with entries in
$U(\g,e)$. Now replacing $\bx$ with $\bw^{-1} \bx$ and using
Proposition~\ref{P:lift}(c) we see that $V$ is dualizable.
\end{proof}

The next lemma, which generalizes \cite[Lem.\ 8.8]{BK2}, follows
from Proposition~\ref{P:lift}(c) and Theorem~\ref{T:dualizable}.

\begin{Lemma} \label{L:invlift}
Let $\bx$ be a lift matrix for $\bv$, then $S_{-\beta}(\bx)^{-1}$ is
a lift matrix for $\bv^*$.
\end{Lemma}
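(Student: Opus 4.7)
The plan is to verify the right-handed lift-matrix condition for $\by := S_{-\beta}(\bx)^{-1}$ directly via the BRST characterization used in the proof of Theorem~\ref{T:dualizable}. The key observation is that by construction $S_\beta(\by)\bx = I$, so the product $A_i B_j$ of the corresponding matrices of $\phi$-images collapses to a constant and is trivially $d$-closed; invertibility of $\phi(\bx)$ then forces $A_i$ itself to be $d$-closed, which gives the lift property for $\by$.

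First, since $\bx$ is $V$-block lower unitriangular it is invertible in $M_n(U(\tilde\p))$; so is $S_{-\beta}(\bx)$, and hence $\by$ is a well-defined matrix with entries in $U(\tilde\p)$ that is $V^*$-block lower unitriangular and satisfies the appropriate filtered-degree bounds (since $S_{-\beta}$ is a filtered algebra automorphism of $U(\tilde\p)$ and these properties are preserved under inversion of block-lower-unitriangular matrices). By construction $\sum_k S_\beta(y_{ik})x_{kj} = \delta_{ij}$ for all $i,j$. Recall from the proof of Theorem~\ref{T:dualizable} the identifications $U(\hat\g)\otimes V \iso U(\hat\g)\otimes V^1 \subseteq U(\hat\g)\otimes \End(V)$ and $U(\hat\g)\otimes \bar V \iso U(\hat\g)\otimes V_n \subseteq U(\hat\g)\otimes \End(V)$, compatible with the respective differentials $d_V$, $d_{\bar V}$ and $d_{\End(V)}$ (up to a harmless sign on odd parts in the second case). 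Thus by the right-handed version of Theorem~\ref{T:brst} (\S\ref{ss:brstright}), it suffices to show $A_i := \sum_l \phi'(y_{il}) \otimes v_n^l \in \ker d_{\End(V)}$ for each $i$, while Theorem~\ref{T:brst} applied to $\bx$ already gives $B_j := \sum_k \phi(x_{kj}) \otimes v_k^1 \in \ker d_{\End(V)}$ for each $j$.

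The main computation, using $\phi' = \phi \circ S_\beta$, that $\phi$ is an algebra homomorphism, and that $v_n^l v_k^1 = \delta_{lk} v_n^1$ in $\End(V)$, is
\begin{equation*}
A_i B_j \;=\; \sum_{l,k} \phi'(y_{il})\,\phi(x_{kj}) \otimes v_n^l v_k^1
\;=\; \phi\!\Bigl(\sum_k S_\beta(y_{ik}) x_{kj}\Bigr) \otimes v_n^1
\;=\; \delta_{ij} \otimes v_n^1.
\end{equation*}
Since $v_n^1$ is killed by $\n$ on both sides, $d_{\End(V)}(1 \otimes v_n^1) = 0$, hence $d_{\End(V)}(A_i B_j) = 0$. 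The Leibniz rule for the superderivation $d_{\End(V)}$ (Lemma~\ref{L:diffgrad}) combined with $d_{\End(V)}(B_j) = 0$ then gives $d_{\End(V)}(A_i) \cdot B_j = 0$ for all $j$. Writing $d_{\End(V)}(A_i) = \sum_l d_{i,l} \otimes v_n^l$ (using that $U(\hat\g) \otimes V_n$ is $d_{\End(V)}$-stable) and noting that $\phi(\bx)$ is invertible in $M_n(U(\hat\p)^0)$ with inverse $\phi(\bx^{-1})$, this equation becomes the matrix identity $(d_{i,1},\ldots,d_{i,n})\,\phi(\bx) = 0$, forcing each $d_{i,l} = 0$. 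Thus $d_{\End(V)}(A_i) = 0$ as required. The principal obstacle is bookkeeping the identifications between the differential graded complexes $U(\hat\g)\otimes V$, $U(\hat\g)\otimes \bar V$ and the subspaces $U(\hat\g)\otimes V^1$, $U(\hat\g)\otimes V_n$ of $U(\hat\g)\otimes \End(V)$; once these are fixed, the proof reduces to the collapse of $A_i B_j$ to a constant and the invertibility of $\phi(\bx)$.
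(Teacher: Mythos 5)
Your proof is correct, but it takes a genuinely different route from the paper's. The paper disposes of this lemma in one sentence, deriving it as an immediate consequence of Theorem~\ref{T:dualizable} (which produces \emph{one} compatible pair $(\bx_0,\by_0)$ with $S_\beta(\by_0)=\bx_0^{-1}$) combined with Proposition~\ref{P:lift}(c) and its right-handed analogue (which describe how all lift matrices differ from a fixed one by a block-lower-unitriangular matrix over $U(\g,e)$ resp.\ $U(\g,e)'$); one then factors an arbitrary $\bx$ as $\bx_0\bw$ and transports the relation. You instead give a direct, self-contained verification: you run the BRST computation from the proof of Theorem~\ref{T:dualizable} with the \emph{specific} candidate $\by = S_{-\beta}(\bx)^{-1}$, observe that the product $A_iB_j$ collapses to the central element $\delta_{ij}\otimes v_n^1$, and then supply the crucial extra step --- the Leibniz rule for $d_{\End(V)}$ (Lemma~\ref{L:diffgrad}) together with the invertibility of $\phi(\bx)$ in $M_n(U(\hat\p)^0)$ --- to force $d_{\End(V)}(A_i)=0$. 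This ``Leibniz plus invertibility'' argument is the genuine content you add; it effectively reverses the direction of the computation in Theorem~\ref{T:dualizable} (which goes from two lift matrices to the conclusion that $S_\beta(\by)\bx$ lies over $U(\g,e)$) and is not present in the paper. What the paper's approach buys is brevity, having set up the abstract classification machinery in Proposition~\ref{P:lift}(c); what yours buys is an explicit demonstration of why the specific matrix $S_{-\beta}(\bx)^{-1}$ works, without appealing to the existence statement in Theorem~\ref{T:dualizable} or the (slightly delicate) right-handed factorization statement. Your preliminary checks --- that $S_{-\beta}(\bx)^{-1}$ is $\bar V$-block lower unitriangular (note that since the right $c$-eigenvalue of $v^i$ equals the left $c$-eigenvalue $c_i$ of $v_i$, the unitriangularity condition for $\bv^*$ coincides with that for $\bv$), and that the Kazhdan filtration degree bounds survive $S_{-\beta}$ and matrix inversion --- are all sound.
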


We now give some notation that we use the next subsection.  Let $M'$
be a right $U(\g,e)'$-module.  Recall the lift matrix $\bx^0$ from
Lemma~\ref{L:inverse}.  Letting $\by^0 = S_{-\beta}(\bx^0)^{-1}$,
the discussion in \S\ref{ss:right} and Lemma~\ref{L:invlift} imply
that $\psi'_{M',\bar V,\by^0} : M' \ostar' \bar V \to M' \otimes
\bar V$ is a vector space isomorphism.  We denote its inverse by
$\chi_{M',\bar V} : M' \otimes \bar V \to M' \ostar' \bar V$.

\subsection{Main theorem}

We now state and prove the main theorem of this section.
The proof is almost identical to that of \cite[Thm.\ 8.10]{BK2}, we
include the details for completeness. In the proof we use the bases
of $V$ and $\bar V$ from the previous subsection, and to simplify
notation we write $\bx = \bx^0$ and $\by = \by^0$ for the lift
matrices.

\begin{Theorem} \label{T:transdual}
Let $M \in \cO_e$ and $V$ be a finite dimensional $U(\g)$-module.
Then there is an isomorphism of $U(\g,e)'$-modules
$$
\bar M \ostar' \bar V \iso \bar{M \ostar V}.
$$
\end{Theorem}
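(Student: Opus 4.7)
The plan is to build the isomorphism explicitly from lift matrices and then verify $U(\g,e)'$-equivariance through direct computation. Let $\bx = \bx^0$ be the canonical lift matrix for $\bv$ of Lemma~\ref{L:inverse}; by Lemma~\ref{L:invlift}, the matrix $\by := S_{-\beta}(\bx)^{-1}$ is a lift matrix for the dual basis $\bv^* = (v^1,\dots,v^n)$ of $\bar V$, providing the right-handed vector space isomorphism $\chi_{\bar M,\bar V}:\bar M \ostar' \bar V \isoto \bar M \otimes \bar V$ from \S\ref{ss:right}. Since $M \in \cO(e)$ has finite dimensional $\t^e$-weight spaces and $V$ is finite dimensional, the natural weight-space pairings assemble into an isomorphism $\bar M \otimes \bar V \iso \bar{M \otimes V}$. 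By Proposition~\ref{P:transadm}, $M \ostar V \in \cO(e)$, and by Lemma~\ref{L:chiequ} the map $\chi_{M,V}$ is $\t^e$-equivariant, so its transpose gives an isomorphism $\bar{M \otimes V} \iso \bar{M \ostar V}$. Composing these three maps yields a vector space isomorphism $\Phi: \bar M \ostar' \bar V \isoto \bar{M \ostar V}$, uniquely characterized by the pairing formula
$$
\Phi\bigl(\psi_{\by,\bar M,\bv^*}(f \otimes v^k)\bigr)\bigl(\psi_{\bx,M,\bv}(m \otimes v_j)\bigr) \;=\; f(m)\,\delta_{jk}
$$
for $m \in M$, $f \in \bar M$ and all indices $j,k$.

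The main obstacle is then to verify that $\Phi$ intertwines the right $U(\g,e)'$-actions. After using $S_\beta$ to identify $U(\g,e)' \iso U(\g,e)$, the task reduces to showing $\Phi(F)(u \cdot z) = \Phi(F \cdot S_\beta^{-1}(u))(z)$ for every $u \in U(\g,e)$, $z \in M \ostar V$ and $F \in \bar M \ostar' \bar V$. Following the template of \cite[Thm.\ 8.10]{BK2}, I would first reduce by naturality of $\chi$ (Theorem~\ref{T:projiso}) to the case $M = U(\g,e)$, and then by the PBW basis theorem of \S\ref{ss:structure} together with Corollary~\ref{C:gen} to checking the identity for $u = \Theta(x)$ with $x \in \g^e$ evaluated on standard generators. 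Unwinding the left action of $u$ on $\psi_{\bx,M,\bv}(1 \otimes v_j)$ via the defining relation~\eqref{e:lift} of the lift matrix produces an expression in the entries $x_{ij}$ and the coefficient functions $b_{ij}$; the parallel right-handed computation on $\psi_{\by,\bar M,\bv^*}(f \otimes v^k)$ produces a mirror expression in the entries of $\by$ involving the \emph{same} coefficient functions $b_{ij}$, since the right-dual coefficient functions $b'_{ij}$ on $\bar V$ coincide with the left $b_{ij}$ on $V$.

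The algebraic cancellation that forces these two expressions to match under the pairing is precisely the dualizability identity $\bx \cdot S_\beta(\by) = I$ proved in Theorem~\ref{T:dualizable}: the ``correction'' terms produced by moving $u$ across $\bx$ on one side are exactly dual to those produced by moving $u$ across $\by$ on the other. The real difficulty in this step is not any individual calculation but the bookkeeping required to track the $S_\beta$-shift and to handle the cosets modulo $\tilde I$ and $\tilde I'$ in a uniform way; here the BRST framework of Section~\ref{S:brst} provides the cleanest setting, as already exploited in the proof of Theorem~\ref{T:dualizable}, allowing the two matrix identities to be compared inside a single differential graded algebra $U(\hat \g) \otimes \End(V)$ rather than juggled between the left- and right-handed pictures by hand.
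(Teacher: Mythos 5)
Your isomorphism $\Phi$ is identical to the paper's $\omega_{M,V}$ (the same three-step composite via $\chi_{\bar M,\bar V}$, the natural pairing, and $\bar{\chi_{M,V}}$), and you have correctly identified the essential ingredient: the dualizability identity $\bx\,S_\beta(\by) = I$ from Theorem~\ref{T:dualizable}, combined with Lemma~\ref{L:invlift}. Where you diverge is in the equivariance check. You propose reducing to PBW generators $\Theta(x)$ via naturality and Corollary~\ref{C:gen}, then unwinding each lift-matrix relation and invoking the BRST framework as a bookkeeping device. The paper's proof does none of this: it works with \emph{arbitrary} $u\in U(\g,e)$ at once by introducing the coproduct matrix $\bu^* = (u^*_{ij})$, where $\delta(u) = \sum u^*_{ij}\otimes v_i^j$, and observing (first for $M = U(\g,e)$, using the uniqueness clause of Proposition~\ref{P:lift}) that $u\cdot\psi_{\bx,M,\bv}(1\otimes v_j)$ must again lie in the image of $\psi_{\bx,M,\bv}$, which forces the factorization $\bx\bu = \bu^*\bx$, i.e.\ $\bu = S_\beta(\by)\,\bu^*\,\bx$. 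This yields the closed action formula \eqref{e:act1} on $M\otimes V$ for general $M$; the mirror right-handed computation gives $\bu' = \by\,\bu'^*\,S_{-\beta}(\bx) = S_\beta(\bu)$, and comparing the two formulas finishes the proof. So no PBW reduction is needed, and the BRST machinery, while crucial for establishing dualizability in Theorem~\ref{T:dualizable}, plays no role in the proof of Theorem~\ref{T:transdual} itself — the lift-matrix formalism suffices once dualizability is in hand. Your route would likely work but is heavier than necessary; the paper's matrix-factorization argument is the leaner version of the same idea, and it also makes the ``$\t^e$-equivariance'' point you lean on via Lemma~\ref{L:chiequ} unnecessary in the equivariance step (it is only needed to define the restricted dual in the first place).
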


\begin{proof}
We define $\omega_{M,V}: \bar M \ostar' \bar V \to \bar{M \ostar V}$
to be the composite
$$
\begin{CD}
\bar M \ostar' \bar V &@>{\chi_{\bar M,\bar V}}>>& \bar M \otimes
\bar V \iso \bar{M \otimes V} &@>{\bar{\chi_{M,V}}}>>& \bar{M \ostar
V},
\end{CD}
$$
where $\bar{\chi_{M,V}}$ is the dual map of $\chi_{M,V}$. Then
$\omega_{M,V}$ is a vector space isomorphism.

Define $\delta : U(\tilde \p) \to U(\tilde \p) \otimes \End(V)$ to
be the composite $(\id_{U(\tilde \p)} \otimes \rho) \tilde \Delta$,
where $\rho : U(\g) \to \End(V)$ is the representation of $U(\g)$ on
$V$.

So for $u \in U(\g,e)$ and $m \in M$, we have
$$
u\left(\sum_{i=1}^n x_{ij} \tilde \1 \otimes m \otimes v_i \right)
= \sum_{k,i=1}^n u^*_{ik} x_{kj} \tilde \1 \otimes m \otimes v_i \in
M \ostar V,
$$
where $\delta(u) = \sum_{i,j=1}^n u^*_{ij} \otimes v_i^j$. Suppose
for a moment that $M = U(\g,e)$, (it does not matter for this part
of the argument that $U(\g,e)$ does not lie in $\cO(e)$). By
Proposition~\ref{P:lift}, we must have $\sum_{i,k=1}^n u^*_{ik}
x_{kj} \tilde \1 \otimes 1 \otimes v_i = \sum_{i,k=1}^n x_{ik}
\tilde \1 \otimes u_{kj} \otimes v_i$, where $u_{kj} \in U(\g,e)$;
so we have $\sum_{k=1}^n x_{ik}u_{kj} = \sum_{k=1}^n u^*_{ik}
x_{kj}$.  This means that $\bu = S_\beta(\by) \bu^* \bx$, where $\bu
= (u_{ij})$ and $\bu^* = (u^*_{ij})$.

Therefore, for general $M$ we have
$$
u\left(\sum_{i=1}^n x_{ij} \otimes m \otimes v_i\right) =
\sum_{i,k=1}^n x_{ik} \otimes u_{kj} m \otimes v_i.
$$
This means that through the isomorphism $\chi_{M,V}$ the action of
$U(\g,e)$ on $M \otimes V$ is given by
\begin{equation} \label{e:act1}
u(m \otimes v_j) = \sum_{i=1}^n u_{ij}m \otimes  v_i,
\end{equation}
and the $u_{ij}$ are defined from $\bu =S_\beta(\by) \bu^* \bx$.

Now an analogous argument gives the action of $U(\g,e)'$ on $\bar M
\otimes \bar V$ through the isomorphism $\chi_{\bar M,\bar V}$.  We
define $\delta' = (\id_{\End_U(\p)} \otimes \bar \rho) \tilde \Delta
: U(\tilde \p) \to U(\tilde \p) \otimes \End'(\bar V)$, where
$\rho'$ is the right representation of $U(\g)$ mapping into the
space of right endomorphisms $\End'(\bar V)$ of $\bar V$.  Given $u
\in U(\g,e)'$ we define the matrix $\bu'^*$ by $\delta'(u) =
\sum_{i,j=1}^n u'^*_{ij} \otimes v_i^j$. Then the action of
$U(\g,e)'$ is given by
\begin{equation} \label{e:act2}
(f \otimes v^i)u = \sum_{j=1}^n f u'_{ij} \otimes v^j,
\end{equation}
where $\bu' = (u'_{ij})$ is defined by
$\bu' = \by \bu'^*S_{-\beta}(\bx)$, so that $\bu' = S_\beta(\bu)$.

We view $\bar{M \ostar V}$ as a $U(\g,e)'$-module as in Definition
\ref{D:restdual}.  Then \eqref{e:act1} and
\eqref{e:act2} imply that $\omega_{M,V}$
is an isomorphism of $U(\g,e)'$-modules.
\end{proof}

\begin{Remark}
We note that we can weaken the hypothesis that $M \in \cO(e)$: we
just require that $M$ is the direct sum of finite dimensional
generalized $\t^e$-weight spaces.
\end{Remark}

The first part of the following corollary is immediate from the
proof of Theorem~\ref{T:transdual}.  The second part can be verified
by direct calculation.

\begin{Corollary} \label{C:tdprop}
Let $M$ be a $U(\g,e)$-module and $V$, $V'$ be finite dimensional
$U(\g)$-modules.
Then the following diagrams commute:
\begin{enumerate}
\item[(i)]
$$
\begin{CD}
\bar M \ostar' \bar V  &@>{\omega_{M,V}}>> \bar{M \ostar V} \\
@V{\chi_{\bar M,\bar V}}VV& @AA{\bar{\chi_{M,V}}}A\\
\bar M \otimes \bar V &@>{\sim}>> \bar{M \otimes V},
\end{CD}
$$
where the bottom map is the canonical isomorphism;
\item[(ii)]
$$
\minCDarrowwidth60pt
\begin{CD}
(\bar M \ostar' \bar V) \ostar' \bar{V'} &@>{\omega_{M,V} \ostar
\id_{\bar{V'}}}>> (\bar{M \ostar V}) \ostar' \bar{V'}
&@>{\omega_{M\ostar V,V'}}>> \bar{(M \ostar V) \ostar V'}\\
@V{a_{\bar M,\bar V, \bar{V'}}}VV& & &
&@AA{\bar{a_{M,V,V'}}}A \\
\bar M \ostar' (\bar V \otimes \bar V')  &@>{\sim}>> \bar M \ostar'
(\bar{V \otimes V'}) &@>{\omega_{M, V \otimes V'}}>> \bar{M \ostar
(V \otimes V')},
\end{CD}
$$
where the bottom left map is the canonical isomorphism,
$\bar{a_{M,V,V'}}$ is the dual of the isomorphism from
Lemma~\ref{L:assoc}, and $a_{\bar M,\bar V, \bar{V'}}$ is the
right-handed version defined in analogy.
\end{enumerate}
\end{Corollary}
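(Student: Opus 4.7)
For part~(i), I would observe that it holds by construction: the definition of $\omega_{M,V}$ given in the proof of Theorem~\ref{T:transdual} is precisely the composite
\[
\omega_{M,V} \;=\; \bar{\chi_{M,V}} \circ c_{M,V} \circ \chi_{\bar M,\bar V},
\]
where $c_{M,V}\colon \bar M \otimes \bar V \isoto \bar{M \otimes V}$ is the canonical isomorphism appearing at the bottom of the square. Since this factorization is exactly what the diagram in~(i) asserts, no further argument is required.

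For part~(ii), the plan is to use part~(i) to reduce the pentagon to compatibilities that are already known. I would replace each of the three $\omega$-arrows in the pentagon by its factorization through $\chi$-maps and canonical isomorphisms, thereby enlarging the pentagon into a diagram every edge of which is either a $\chi$-map, an $a$-map, or a canonical isomorphism of (restricted duals of) tensor products. This enlarged diagram then decomposes into three pieces: first, the commutative square at the end of Lemma~\ref{L:assoc}, which asserts that $a_{M,V,V'}$ intertwines $\chi_{M \ostar V,V'}$ and $\chi_{M, V \otimes V'}$; second, the right-handed analogue of this square (obtained by repeating the argument of Lemma~\ref{L:assoc} with the conventions of \S\ref{S:right}), which governs $a_{\bar M,\bar V,\bar{V'}}$ together with the right-handed $\chi$-maps and the canonical isomorphism $\bar V \otimes \bar{V'} \iso \bar{V \otimes V'}$; and third, a naturality square of canonical isomorphisms between restricted duals of triple tensor products, which commutes trivially.

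The main obstacle I expect is bookkeeping, namely verifying that the three sub-diagrams genuinely cover the enlarged pentagon with no leftover face, and that the canonical isomorphisms hidden inside $\bar{a_{M,V,V'}}$ and the bottom-left arrow of the pentagon agree with those produced by the factorizations of the $\omega$-arrows. I would carry this out by chasing a generic element of the form $f \otimes m \otimes v_j \otimes v'_k$ through both paths of each sub-diagram, using the explicit formulas for the inverses of the $\chi$-maps via lift matrices from Proposition~\ref{P:lift} and its right-handed analogue in \S\ref{ss:right}. This is the direct calculation alluded to in the statement, and once the three sub-diagrams are in place the commutativity of the pentagon follows.
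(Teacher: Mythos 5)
Your reading of part (i) is exactly right: $\omega_{M,V}$ is literally \emph{defined} in the proof of Theorem~\ref{T:transdual} as the composite $\bar{\chi_{M,V}} \circ c_{M,V} \circ \chi_{\bar M,\bar V}$, so the square commutes tautologically. This is precisely what the paper means by ``immediate from the proof of Theorem~\ref{T:transdual}.''

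For part (ii), the paper offers no argument beyond ``direct calculation,'' so your structured decomposition is a legitimate (and arguably more illuminating) route. Your three sub-diagrams are the right ingredients, but the count is slightly optimistic: once you expand $\omega_{M,V} \ostar' \id_{\bar{V'}}$ and $\omega_{M\ostar V,V'}$, you also need to slide $\chi_{\bar M\ostar'\bar V,\bar{V'}}$ past the morphism $\omega_{M,V} \ostar' \id_{\bar{V'}}$ (and dually for the other $\chi$'s). This uses the naturality of $\chi$ in its \emph{module} argument --- recorded in Theorem~\ref{T:projiso} and its right-handed analogue --- and contributes extra naturality squares beyond the three faces you name. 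You implicitly absorb these into the ``bookkeeping'' you defer to the element chase, and since you plan to chase $f \otimes m \otimes v_j \otimes v'_k$ through both paths using the lift-matrix inverses from Proposition~\ref{P:lift} and \S\ref{ss:right}, those faces would indeed be verified along the way. In short: the argument is correct, aligned with the paper's intent, and more explicit than what the paper provides; just be prepared for the enlarged diagram to have more than three non-trivial faces once the $\chi$-naturality squares are made visible.
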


\section{Independence of $\l$ and good grading}
\label{S:indep}

As mentioned in \S\ref{ss:whittw}, the definition of $W_\l$ depends
on the choice of good grading $\g = \bigoplus_{j \in \R} \g(j)$ for
$e$ and choice of isotropic subspace $\l$ of $\g(-1)$. In this
subsection we briefly recall the arguments from \cite{GG} and
\cite{BG} showing that $W_\l$ is independent up to isomorphism of
these choices.  Then we show that through these isomorphisms, the
definition of translation does not depend on the choices of good
grading and $\l$ in the appropriate sense.

\subsection{Independence of choice of $\l$}

We use the notation from \S\ref{ss:whittw} and
\S\ref{ss:whitttrans}.  In particular, $\l$ is an isotropic subspace
of $\k$ used in the definition of $W_\l$.

Suppose $\l'$ is another isotropic subspace of $\g(-1)$ with $\l'
\sub \l$.   Then we have the surjection $Q_{\l'} \onto Q_\l$.  This
restricts to a map $\psi_{\l',\l}: W_{\l'} \to W_{\l}$, which is
shown in \cite[\S5.5]{GG} to be an isomorphism; this is proved by
arguing that the associated graded map is an isomorphism.

Now suppose that $\l'$ is any isotropic subspace of $\g(-1)$.  Take
$\l'' = 0$, then we have isomorphisms $\psi_{\l'',\l}: W_{\l''}
\isoto W_{\l}$ and $\psi_{\l'',\l'}: W_{\l''} \isoto W_{\l'}$.
Therefore, we obtain a canonical isomorphism $\psi_{\l',\l} =
\psi_{\l'',\l} \psi_{\l'',\l'}^{-1} : W_{\l'} \to W_{\l}$ ; note
that there is no ambiguity in our notation.

Let $M$ be a $W_\l$-module and $V$ a finite dimensional
$U(\g)$-module.  In Proposition~\ref{P:indepiso} we show that $M
\ostar_\l V$ is isomorphic to $M \ostar_{\l'} V$ in the appropriate
sense. First we note that we may view $M$ as a $W_{\l'}$-module via
``$u m = \psi_{\l',\l}^{-1}(u) m$'' for $u \in W_{\l'}$ and $m \in
M$; thus we can define $M \ostar_{\l'} V$.

\begin{Proposition} \label{P:indepiso}
There is an canonical isomorphism of vector spaces $\xi_{\l',\l} : M
\ostar_{\l'} V \isoto M \ostar_{\l} V$ such that
$$
\xi_{\l',\l}(uz) = \psi_{\l',\l}(u) \xi_{\l',\l}(z)
$$
for all $u \in W_{\l'}$, $z \in M \ostar_{\l'} V$.
\end{Proposition}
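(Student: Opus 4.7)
The plan is to adapt the strategy used in \cite[\S5.5]{GG} for constructing the isomorphism $\psi_{\l',\l}: W_{\l'} \isoto W_\l$. First I would reduce to the case $\l' \sub \l$: taking $\l''=0$ (which satisfies $\l'' \sub \l$ and $\l'' \sub \l'$) and assuming the result for the pairs $(\l'',\l)$ and $(\l'',\l')$, one defines $\xi_{\l',\l} := \xi_{\l'',\l} \circ \xi_{\l'',\l'}^{-1}$, in exact analogy with the definition of $\psi_{\l',\l}$ in \S\ref{ss:whittw}; this also ensures the uniqueness of $\xi_{\l',\l}$ in the original general setting.

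For $\l' \sub \l$, one has $\m_{\l'} \sub \m_\l$, hence $I_{\l'} \sub I_\l$, giving a natural surjection $\pi: Q_{\l'} \onto Q_\l$ of $U(\g)$-modules. Since the restriction of $\pi$ to $W_{\l'} \sub Q_{\l'}$ equals $\psi_{\l',\l}$, this $\pi$ is a map of right modules intertwining the $W_{\l'}$-action on $Q_{\l'}$ with the $W_\l$-action on $Q_\l$ via $\psi_{\l',\l}$, and so tensoring produces a $U(\g)$-module surjection
$$
\pi_{M,V}: (Q_{\l'} \otimes_{W_{\l'}} M) \otimes V \onto (Q_\l \otimes_{W_\l} M) \otimes V,
$$
where on the right $M$ is viewed as a $W_\l$-module directly and on the left as a $W_{\l'}$-module via $\psi_{\l',\l}$. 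The containment $\l^{\perp_\omega} \sub (\l')^{\perp_\omega}$ yields $\n_\l \sub \n_{\l'}$, so $\pi_{M,V}$ is $\n_\l$-equivariant for the dot actions \eqref{e:nact}. Precomposing the restriction of $\pi_{M,V}$ to $\n_\l$-invariants with the natural inclusion $M \ostar_{\l'} V = H^0(\n_{\l'},\cdot) \sub H^0(\n_\l,\cdot)$ yields the desired linear map $\xi_{\l',\l}: M \ostar_{\l'} V \to M \ostar_\l V$, and the compatibility $\xi_{\l',\l}(uz) = \psi_{\l',\l}(u) \xi_{\l',\l}(z)$ is immediate from the compatibility of $\pi$ with the right module actions.

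The remaining task is to verify that $\xi_{\l',\l}$ is a vector space isomorphism, for which I would use the Kazhdan filtration and a standard filtered isomorphism argument. By Theorem~\ref{T:iso} applied for both $\l'$ and $\l$, the associated graded modules $\gr(M \ostar_{\l'} V)$ and $\gr(M \ostar_\l V)$ are both identified with $\gr M \otimes V$ via the identifications $\gr Q_{\l'} \iso \C[N_{\l'}] \otimes \C[\cS]$ and $\gr Q_\l \iso \C[N_\l] \otimes \C[\cS]$, followed by the evaluation-at-$1$ map used in the proof of Theorem~\ref{T:iso}. Tracing naturality through the intermediate space $H^0(\n_\l, \gr((Q_{\l'} \otimes_{W_{\l'}} M) \otimes V))$, the associated graded $\gr \pi_{M,V}$ corresponds to restriction of functions $\C[N_{\l'}] \onto \C[N_\l]$ along the closed embedding $N_\l \into N_{\l'}$, and evaluation at $1$ manifestly commutes with this restriction; thus $\gr \xi_{\l',\l}$ identifies with the identity on $\gr M \otimes V$, and since the Kazhdan filtrations on both sides are exhausting and bounded below, $\xi_{\l',\l}$ is an isomorphism. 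The main technical obstacle will be this last naturality check: carefully chasing elements through the evaluation-at-$1$ identifications of Theorem~\ref{T:iso} for both $\l'$ and $\l$ to confirm that $\gr \xi_{\l',\l}$ really is the identity on $\gr M \otimes V$; once that is in hand, the rest of the argument is essentially formal.
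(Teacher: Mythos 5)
Your proposal is correct and follows essentially the same route as the paper's proof: reduce to $\l' \sub \l$, pass the surjection $Q_{\l'} \onto Q_\l$ through the tensor constructions to get the candidate map $\xi_{\l',\l}$, and use the Kazhdan filtration together with the evaluation-at-$1$ identifications from Theorem~\ref{T:iso} (the maps $\bar\epsilon'$, $\bar\epsilon$) to see that $\gr\xi_{\l',\l}$ is the identity on $\gr M \otimes V$ and hence $\xi_{\l',\l}$ is an isomorphism. The naturality check you flag as the main technical step is exactly the commutative diagram the paper records, and your additional remark that $\pi_{M,V}$ is $\n_\l$-equivariant (via $\n_\l \sub \n_{\l'}$) just spells out why the restriction to invariants makes sense.
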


\begin{proof}
It suffices to consider the case $\l' \sub \l$; the general case can
be dealt with by taking a composition as for $\psi_{\l',\l}$.

The surjection $Q_{\l'} \onto Q_\l$ gives a map $Q_{\l'}
\otimes_{W_{\l'}} M \onto Q_\l \otimes_{W_\l} M$. This gives rise to
a homomorphism of $U(\g)$-modules $(Q_{\l'} \otimes_{W_{\l'}} M)
\otimes V \onto (Q_\l \otimes_{W_\l} M) \otimes V$, which in turn
restricts to a map $\xi_{\l',\l}: M \ostar_{\l'} V \to M \ostar_\l
V$.

There are Kazhdan filtrations on $Q_\l$, $Q_{\l'}$, $M$ and $V$ as
defined in \S\ref{ss:whittw} and \S\ref{ss:kazh}; we take the same
filtration of $M$ considered as a module for $W_\l$ and $W_{\l'}$.
Therefore, we have filtrations on $M \ostar_{\l'} V$ and $M
\ostar_\l V$. To show that $\xi_{\l',\l}$ is an isomorphism, it
suffices, by a standard filtration argument, to show that the
associated graded map $\gr \xi_{\l',\l} : \gr (M \ostar_{\l'} V) \to
\gr (M \ostar_\l V)$ is an isomorphism.

As in the proof of Theorem~\ref{T:iso}, we can identify $\gr((Q_\l
\otimes_{W_\l} M) \otimes V) \iso \gr M \otimes \C[N_\l] \otimes V$
and $\gr(M \ostar_\l V)$ with the $N_\l$-invariants $(\gr M \otimes
\C[N_\l] \otimes V)^{N_\l}$.  Under these identifications the map
$\gr M \otimes \C[N_{\l'}] \otimes V \to \gr M \otimes V$ induced
from evaluation at $1$ restricts to an isomorphism $\bar \epsilon :
\gr(M \ostar_\l V) \isoto \gr M \otimes V$ as in
\eqref{e:barepsilon}. Also
$$
\gr \xi_{\l',\l}: (\gr M \otimes
\C[N_{\l'}] \otimes V)^{N_{\l'}} \to (\gr M \otimes \C[N_\l] \otimes
V)^{N_\l}
$$
is induced from the restriction map $\C[N_{\l'}] \to \C[N_\l]$.
Thus, it is straightforward to see that the diagram below commutes,
which implies that $\gr \xi_{\l',\l}$ is an isomorphism as required.
$$
\begin{CD}
(\gr M \otimes \C[N_{\l'}] \otimes V)^{N_{\l'}}
&@>{\gr \xi_{\l',\l}}>>&(\gr M \otimes \C[N_\l] \otimes V)^{N_\l}\\
@V{\bar \epsilon'}VV&&@VV{\bar \epsilon}V\\
\gr M \otimes V &@>{\id}>>& \gr M \otimes V
\end{CD}
$$
It is clear from construction that $\xi_{\l',\l}$ satisfies the
condition in the proposition.
\end{proof}

\subsection{Independence of good grading} \label{ss:indepgood}

We now introduce some notation and terminology required to show that
the definition of $W_\l$ does not depend on the choice of good
grading up to isomorphism.

In this section we allow ourselves to consider the more general
notion of good $\R$-gradings: we recall that an $\R$-grading $\g =
\bigoplus_{j \in \R} \g(j)$ for $e$ is good for $e$ if $e \in
\g(2)$, $\g^e \sub \bigoplus_{j \geq 0} \g(j)$ and $\z(\g) \sub
\g(0)$. Then the alternating form $\lan \cdot | \cdot \ran$ can be
defined on $\k = \g(-1)$ in exactly the same way as for case of
$\Z$-gradings. For an isotropic subspace $\l$ of $\k$ we can define
$\m_\l$, $\n_\l$, $Q_\l$ and $W_\l$ using the same process as in
\S\ref{ss:whittw}. In this subsection we only consider the case
where $\l$ is a Lagrangian subspace of $\g(-1)$ so that $\m_\l =
\n_\l$.

We use the notation $\Gamma : \g = \bigoplus_{j \in \R} \g(j)$ to
denote the good grading for $e$, and let $\Gamma' : \g =
\bigoplus_{j \in \R} \g'(j)$ be another good grading for $e$. Let
$\l'$ be a Lagrangian subspace of $\g'(-1)$.  Then $\m'_{\l'}$,
$Q'_{\l'}$ and $W'_{\l'}$ are defined in analogy to $\m_\l$, $Q_\l$
and $W_\l$.

The proof, of \cite[Thm.\ 1]{BG}, that the definition of $W_\l$ does
not depend on the choice of good grading is based on the notion of
adjacency. We recall from \cite{BG} that the two good gradings $\g =
\bigoplus_{j \in \R} \g(j)$ and $\g = \bigoplus_{j \in \R} \g'(j)$
are called {\em adjacent} if it is possible to choose a Lagrangian
subspaces $\l$ of $\g(-1)$ and $\l'$ of $\g'(-1)$ such that $\m_\l =
\m'_{\l'}$.  In this case we simply have $W_\l = W'_{\l'}$: equality
as algebras and therefore certainly isomorphic.

Next suppose that the two good gradings $\Gamma$ and $\Gamma'$ for
$e$ are conjugate by $g \in G^e$, where $G^e$ is the centralizer of
$e$ in $G$.  Let $\l'$ be the image of $\l$ under the adjoint action
of $g$.  Then it is clear that $g$ induces an isomorphism $\gamma_g
: W_\l \isoto W'_{\l'}$.

Let $\Gamma$ and $\Gamma'$ be two good gradings for $e$. The key
ingredient for the proof that the definition of $W_\l$ does not
depend on the choice of good grading is \cite[Thm.\ 2]{BG}, which
says: there exists a chain $\Gamma_1, \dots, \Gamma_n$ of good
gradings for $e$ such that $\Gamma$ is conjugate to $\Gamma_1$ and
$\Gamma' = \Gamma_n$, and $\Gamma_i$ is adjacent to $\Gamma_{i+1}$
for each $i=1,\dots,n-1$. So we obtain an isomorphism $\phi : W_\l
\isoto W'_{\l'}$, by composing an isomorphism of the form $\gamma_g$
(to move from $\Gamma$ to $\Gamma_1$) with isomorphisms of the form
$\psi_{\l_i,\l_{i+1}}$ (to move between $\Gamma_i$ and
$\Gamma_{i+1}$).

Let $M$ be a $W_\l$-module and $V$ a finite dimensional
$U(\g)$-module.  Theorem~\ref{T:indepgood} below, which says that
``translation does not depend on the choice of good grading''.  In
the statement we write $M'$ for $M$ viewed as an $W'_{\l'}$-module
through $\phi$; and we write $M' \ostar'_{\l'} V$ for translation of
the $W'_{\l'}$-module $m'$ by $V$.  It is proved by taking a chain
$\Gamma_1, \dots, \Gamma_n$ of good gradings as above, and
constructing $\eta$ as composition of isomorphisms. First one takes
an isomorphism $\delta_g$ that is determined by conjugation by $g$
(in a similar way to how $\gamma_g$ is defined), then composes with
isomorphisms of the form $\xi_{\l_i,\l_{i+1}}$ from
Proposition~\ref{P:indepiso}.

We have to observe that the arguments required for the proof
Proposition~\ref{P:indepiso} go through in the more general setting
where we are considering a good $\R$-grading.  In particular, the
filtrations of algebras and modules considered are not necessarily
integral, but indexed by a countable subset $I$ of $\R$. This subset
$I$ is closed under addition and every subset of $I$ has a greatest
and least element with respect to $<$. These conditions mean that
one can make sense of all the usual notions associated to
filtrations, e.g.\ filtered algebras and associated graded modules.

\begin{Theorem} \label{T:indepgood}
There is an isomorphism of vector spaces $\eta : M \ostar_\l V
\isoto M' \ostar_{\l'} V$ such that
$$
\eta(uz) = \phi(u) \eta(z)
$$
for all $u \in W_{\l'}$ and $z \in M \ostar_{\l'} V$.
\end{Theorem}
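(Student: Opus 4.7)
The plan is to mimic the proof of \cite[Thm.\ 1]{BG}, using the chain of good gradings provided by \cite[Thm.\ 2]{BG} to reduce the construction of $\eta$ to two elementary moves: (i) conjugation of the good grading by an element $g \in G^e$, and (ii) adjacency of two good gradings. For each elementary move I will build a vector space isomorphism of translations that intertwines the corresponding algebra isomorphism ($\gamma_g$ or the appropriate $\psi$), and then define $\eta$ as the composition along the whole chain.

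For the conjugation move, suppose $\Gamma' = g \cdot \Gamma$ and $\l' = g \cdot \l$. The adjoint action of $g$ on $U(\g)$ descends to a bijection $Q_\l \isoto Q'_{\l'}$ that restricts to the isomorphism $\gamma_g : W_\l \isoto W'_{\l'}$ from \S\ref{ss:indepgood}, and $g$ also acts on $V$ via $G \to GL(V)$. I would define the conjugation isomorphism $\delta_g : M \ostar_\l V \isoto M' \ostar'_{\l'} V$ by
\[
(u \1_\l \otimes m) \otimes v \;\longmapsto\; ((g \cdot u)\1'_{\l'} \otimes m) \otimes gv,
\]
and verify that it is well-defined (the right $W_\l$-action on $Q_\l$ corresponds, via $\gamma_g$, to the right $W'_{\l'}$-action on $Q'_{\l'}$) and that it carries the dot action of $\n_\l$ to the dot action of $\n'_{\l'} = g \cdot \n_\l$, so that invariants map to invariants. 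The $\gamma_g$-equivariance of $\delta_g$ is then immediate from the formula.

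For the adjacency move, choose Lagrangians $\l_0 \sub \g(-1)$ and $\l'_0 \sub \g'(-1)$ with $\m_{\l_0} = \m'_{\l'_0}$. Since $\l_0$ and $\l'_0$ are Lagrangian, $\n_{\l_0} = \m_{\l_0} = \m'_{\l'_0} = \n'_{\l'_0}$, hence $Q_{\l_0} = Q'_{\l'_0}$ and $W_{\l_0} = W'_{\l'_0}$ as subalgebras of $U(\g)$, and the two dot actions on $(Q_{\l_0} \otimes_{W_{\l_0}} M) \otimes V$ coincide; this produces the tautological identity $M \ostar_{\l_0} V = M \ostar'_{\l'_0} V$. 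To bridge from $\l$ to $\l_0$ inside $\Gamma$ and from $\l'_0$ to $\l'$ inside $\Gamma'$, I would invoke $\xi_{\l,\l_0}$ and $\xi_{\l'_0,\l'}$ from Proposition~\ref{P:indepiso} (in its $\R$-graded extension) and concatenate. The intertwining property of the concatenation with respect to the composite algebra isomorphism is furnished by Proposition~\ref{P:indepiso} applied at each end.

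Assembling these elementary isomorphisms along the chain $\Gamma = \Gamma_0, \Gamma_1, \dots, \Gamma_n = \Gamma'$ yields $\eta$, and the compatibility condition $\eta(uz) = \phi(u)\eta(z)$ follows because $\phi$ is by construction the matching composition of $\gamma_g$'s and $\psi$'s. The main obstacle will be verifying that Proposition~\ref{P:indepiso} extends to the setting of good $\R$-gradings, as flagged in the paragraph preceding the theorem: in that generality the Kazhdan filtrations are indexed by a countable submonoid $I \sub \R$ rather than by $\Z_{\ge 0}$, and one must check that the associated graded identifications $\gr Q_\l \iso \C[N_\l] \otimes \C[\cS]$, $\gr W_\l \iso \C[\cS]$, together with the $N_\l$-invariants computation $(\C[N_\l] \otimes V)^{N_\l} \iso V$ and the standard filtration arguments, all remain valid. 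The well-ordering hypotheses on $I$ recalled in \S\ref{ss:indepgood} are precisely what makes these arguments formal, so no new input beyond care with indexing should be required.
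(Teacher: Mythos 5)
Your proposal follows the paper's proof essentially verbatim: reduce to the chain of good gradings from \cite[Thm.\ 2]{BG}, handle the conjugation move by a $\gamma_g$-equivariant isomorphism $\delta_g$, handle adjacency via the tautological identity for matching Lagrangians together with the $\R$-graded extension of Proposition~\ref{P:indepiso}, and compose. The paper sketches these steps at exactly the level of detail you give, including the closing caveat about filtrations indexed by a countable, well-ordered additive submonoid of $\R$.
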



\begin{thebibliography}{CPS33}

\bibitem[Ar]{Ar} T.~Arakawa, {\em Representation theory of $W$-algebras},
Invent.\ Math.\ {\bf 169} (2007), 219--320.

\bibitem[BT]{BT} J.~de Boer and T.~Tjin,
{\em Quantization and representation theory of finite $W$-algebras},
Comm.\ Math.\ Phys.\ {\bf 158} (1993), 485--516.

\bibitem[Br]{Br}  J.~Brown,
{\em Twisted Yangians and finite W-algebras},  Transform.\ Groups
{\bf 14} (2009),  no.\ 1, 87--114.

\bibitem[BG]{BG}
J.~Brundan and S.~M.~Goodwin, {\em Good grading polytopes}, Proc.\
London Math.\ Soc.\ {\bf 94} (2007), 155--180.

\bibitem[BGK]{BGK} J.~Brundan, S.~M.~Goodwin and A.~Kleshchev,
{\em Highest weight theory for finite $W$-algebras}, Internat.\
Math.\ Res.\ Notices, {\bf 15} (2008), Art.\ ID rnn051.

\bibitem[BK1]{BK1}
J.~Brundan and A.~Kleshchev, {\em Shifted Yangians and finite
$W$-algebras}, Adv.\ Math.\ {\bf  200} (2006), 136--195.

\bibitem[BK2]{BK2}
\bysame, 
{\em Representations of shifted
Yangians and finite $W$-algebras}, Mem.\ Amer.\ Math.\ Soc. {\bf 196} (2008).

\bibitem[BK3]{BK3}
\bysame, 
{\em Schur-Weyl duality for higher
levels}, Selecta Math {\bf 14} (2008), 1--57 (2008).

\bibitem[Ca]{Ca}
R.~Carter, {\em Finite Groups of Lie Type}, Wiley, N.Y., 1985.

\bibitem[D${}^3$HK]{DDDHK}
A.~D'Andrea, C.~De Concini, A. De Sole, R. Heluani and V. Kac, {\em
Three equivalent definitions of finite $W$-algebras}, appendix to
\cite{DK}.

\bibitem[DK]{DK}
A.~De Sole and V.~Kac,  {\em Finite vs affine $W$-algebras}, Jpn.\
J.\ Math.\ {\bf 1} (2006), 137--261.

\bibitem[EK]{EK}
A.~Elashvili and V.~Kac,  {\em Classification of good gradings of
simple Lie algebras}, Lie groups and invariant theory,
(E.~B.~Vinberg ed.), pp. 85--104, Amer.\ Math.\ Soc.\ Transl.\ {\bf
213}, AMS, 2005.

\bibitem[GG]{GG}
W.~L.~Gan and V.~Ginzburg, {\em Quantization of Slodowy slices},
Internat.\ Math.\ Res.\ Notices {\bf 5} (2002), 243--255.

\bibitem[Gi]{Gi}
V.~Ginzburg, {\em Harish-Chandra bimodules for quantized Slodowy
slices}, preprint, arXiv:0807.0339v2 (2008).

\bibitem[Go]{Go}
S.~M.~Goodwin, {\em A note on Verma modules for finite
$W$-algebras}, in preparation, (2009).

\bibitem[GRU]{GRU}
S.~M.~Goodwin, G.~R\"ohrle and G.~Ubly, {\em On $1$-dimensional
representations of finite $W$-algebras associated to simple Lie
algebras of exceptional type}, preprint, arXiv:0905.3714 (2009).

\bibitem[Ja]{Ja}
J.~C.~Jantzen, {\em Representations of Algebraic Groups}, Second
Edition, Mathematical Surveys and Monographs, vol.\ 107, 2003.

\bibitem[Ko]{Ko}
B.~Kostant, {\em On Whittaker modules and representation theory},
Invent.\ Math.\ {\bf 48} (1978), 101--184.

\bibitem[Lo1]{Lo1}
I.~Losev, {\em Quantized symplectic actions and $W$-algebras},
preprint, arXiv:0707.3108v3 (2007).

\bibitem[Lo2]{Lo2} \bysame,
{\em Finite dimensional representations of W-algebras}, preprint,
arXiv:0807.1023 (2008).

\bibitem[Lo3]{Lo3} \bysame,
{\em On the structure of the category $\mathcal O$ for
$W$-algebras}, preprint, arXiv:0812.1584 (2008).

\bibitem[Lo4]{Lo4} \bysame,
{\em $1$-dimensional representations and parabolic induction for
$W$-algebras}, preprint, \\ arXiv:0906.0157, (2009).

\bibitem[Ly]{Ly} T.~E.~Lynch, {\em Generalized Whittaker vectors and
representation theory}, PhD thesis, M.I.T., 1979.

\bibitem[Pr1]{Pr1}
A.~Premet, {\em Special transverse slices and their enveloping
algebras}, Adv.\ in Math.\ {\bf 170} (2002), 1--55.

\bibitem[Pr2]{Pr2} \bysame,
{\em Enveloping algebras of Slodowy slices and the Joseph ideal},
J.\ Eur.\ Math.\ Soc.\ {\bf 9} (2007), 487--543.

\bibitem[Pr3]{Pr3} \bysame,
{\em Primitive ideals, non-restricted representations and finite
$W$-algebras}, Mosc.\ Math.\ J.\ {\bf 7} (2007), 743--762.

\bibitem[Pr4]{Pr4} \bysame,
{\em Commutative quotients of finite W-algebras},
preprint, arXiv:0809.0663 (2008).

\bibitem[Sk]{Sk} S.~Skryabin, {\em A category equivalence},
{appendix} to \cite{Pr1}.

\bibitem[VD]{VD} K.~de Vos and P.~van Driel,
{\em The Kazhdan--Lusztig conjecture for finite $W$-algebras},
Lett.\ Math.\ Phys.\ {\bf 35} (1995), 333--344.

\end{thebibliography}
\end{document}